\title[Cubic threefolds, hyperk\"ahler manifolds and the complex ball]{Cubic threefolds and hyperk\"ahler manifolds uniformized by the 10-dimensional complex ball}
\author{Samuel Boissi\`ere}
\address{Samuel Boissi\`ere, Universit\'e de Poitiers, 
Laboratoire de Math\'ematiques et Applications, 
 T\'el\'eport 2, Boulevard Marie et Pierre Curie
 BP 30179, 86962 Futuroscope Chasseneuil Cedex, France}
\email{samuel.boissiere@math.univ-poitiers.fr}
\urladdr{http://www-math.sp2mi.univ-poitiers.fr/$\sim$sboissie/}
\author{Chiara Camere}
\address{Chiara Camere, Universit\`a  degli Studi di Milano,
Dipartimento di Matematica,
Via Cesare Saldini 50,
20133 Milano, Italy} 
\email{chiara.camere@unimi.it}
\urladdr{http://www.mat.unimi.it/users/camere/en/index.html}
\author{Alessandra Sarti}
\address{Alessandra Sarti, Universit\'e de Poitiers, 
Laboratoire de Math\'ematiques et Applications, 
 T\'el\'eport 2, Boulevard Marie et Pierre Curie,
 BP 30179, 86962 Futuroscope Chasseneuil Cedex, France}
\email{sarti@math.univ-poitiers.fr}
\urladdr{http://www-math.sp2mi.univ-poitiers.fr/$\sim$sarti/}
\date{\today}
\newcommand{\ie}{{\it i.e. }}
\newcommand{\loccit}{{\it loc. cit.}}
\newcommand{\resp}{{\it resp.\,}}
\newcommand{\apriori}{{\it a priori }}
\newcommand{\IB}{\mathbb B}
\newcommand{\IC}{\mathbb C}
\newcommand{\IP}{\mathbb P}
\newcommand{\IQ}{\mathbb Q}
\newcommand{\IR}{\mathbb R}
\newcommand{\IZ}{\mathbb Z}
\newcommand{\cA}{\mathcal{A}}
\newcommand{\cC}{\mathcal{C}}
\newcommand{\cE}{\mathcal{E}}
\newcommand{\cF}{\mathcal{F}}
\newcommand{\cH}{\mathcal{H}}
\newcommand{\cK}{\mathcal{K}}
\newcommand{\cM}{\mathcal{M}}
\newcommand{\cN}{\mathcal{N}}
\newcommand{\cO}{\mathcal{O}}
\newcommand{\cP}{\mathcal{P}}
\newcommand{\cR}{\mathcal{R}}
\newcommand{\cZ}{\mathcal{Z}}
\newcommand{\kM}{\mathfrak{M}}
\newcommand{\kS}{\mathfrak{S}}
\newcommand{\calC}{\mathscr{C}}
\newcommand{\calH}{\mathscr{H}}
\newcommand{\rF}{{\rm{F}}}
\newcommand{\lra}{\longrightarrow}
\DeclareMathOperator{\Aut}{Aut}
\DeclareMathOperator{\Bir}{Bir}
\DeclareMathOperator{\Span}{Span}
\DeclareMathOperator{\PGL}{PGL}
\DeclareMathOperator{\Orth}{O}
\DeclareMathOperator{\disc}{disc}
\DeclareMathOperator{\rk}{rk}
\DeclareMathOperator{\Res}{Res}
\DeclareMathOperator{\Mon}{Mon}
\DeclareMathOperator{\IntJac}{IJ}
\DeclareMathOperator{\Alb}{Alb}
\DeclareMathOperator{\Pf}{Pf}
\DeclareMathOperator{\Grass}{Grass}
\DeclareMathOperator{\Flag}{Flag}
\newcommand{\proj}[1]{\IP^{#1}} % projective space
\newcommand{\ls}[1]{|{#1}|} % linear system
\newcommand{\coloneqq}{:=}
\newcommand{\eqqcolon}{=:}
\newcommand{\leftquo}[2]{{\left.\raisebox{-.2em}{$#1$}\middle\backslash\raisebox{.2em}{$#2$}\right.}}
\newcommand{\NS}{{\rm NS}}
\newcommand{\HH}{{\rm H}}
\newcommand{\id}{{\rm id}}
\newcommand{\Fano}[1]{\rF(#1)}
\newcommand{\stable}{{\rm s}}
\newcommand{\smooth}{{\rm sm}}
\newcommand{\nodal}{{\rm n}}
\newcommand{\chordal}{{\rm c}}
\newcommand{\cov}{{\rm cov}}
\newcommand{\prim}{{\circ}}
\newcommand{\orth}{{\circ}}
\newcommand{\marked}{{\rm mk}}
\newcommand{\Exc}{{\rm Exc}}
\newcommand{\Hdg}{{\rm Hdg}}
\newcommand{\Pfaff}{\cP\kern-.3em f}
\newcommand{\BCSmodfr}{\cM_{\langle 6\rangle}^{\rho,\xi}}
\newcommand{\BCSmod}{\cN_{\langle 6\rangle}^{\rho,\xi}}
\theoremstyle{plain}
\newtheorem{theorem}{Theorem}[section]
\newtheorem{lemma}[theorem]{Lemma}
\newtheorem{proposition}[theorem]{Proposition}
\newtheorem{corollary}[theorem]{Corollary}
\theoremstyle{definition}
\theoremstyle{remark}
\newtheorem{remark}[theorem]{Remark}
\begin{document}

\begin{abstract} 
We first prove an isomorphism between the moduli space of smooth cubic threefolds and the moduli space of hyperk\"ahler fourfolds of $K3^{[2]}$-type with a non-symplectic automorphism of order three, whose invariant lattice has rank one and is generated by a class of square~$6$; both these spaces are uniformized by the same 10-dimensional arithmetic complex ball quotient. We then study the degeneration of the automorphism along the loci of nodal or chordal degenerations of the cubic threefold, showing the birationality of these loci with some moduli spaces of hyperk\"ahler fourfolds of $K3^{[2]}$-type with non-symplectic automorphism of order three belonging to different families. Finally, we construct a cyclic Pfaffian cubic fourfold to give an explicit construction of a non-natural automorphism of order three on the Hilbert square of a K3 surface.
\end{abstract}

\maketitle

\section{Introduction}

The aim of this paper is to study certain moduli spaces of  $4$-dimensional irreducible holomorphic symplectic manifolds endowed with a non-symplectic group action of prime order, constructed in full generality by the authors in \cite{BCS_ball}, which are uniformized by the same $10$-dimensional arithmetic ball quotient as the smooth cubic threefolds \cite{ACT}, and to explore some deep consequences of this isomorphism.

Arithmetic complex ball quotients have attracted great interest in the last decades since they have been proven to be ubiquitous targets of period maps, allowing to show unexpected isomorphisms between different moduli spaces. For example, an arithmetic quotient of the complement of a hyperplane arrangement $\Delta$ in the $4$-dimensional complex ball  $\IB^4$ arising as period domain of smooth cubic surfaces in~\cite{ACT_surf} has been studied by Dolgachev--van Geemen--Kond\=o~\cite{DvGK}, who show that $\mathbb{B}^4\setminus \Delta$ and degenerations to $\Delta$ can also be interpreted  as moduli spaces of K3 surfaces endowed with certain non-symplectic automorphisms of order three.

Another famous example, which is the core of the present paper, is given by the quotient of the $10$-dimensional complex ball which contains as a Zariski open set the moduli space of smooth cubic threefolds, as shown by Allcock--Carlson--Toledo~\cite{ACT} and by Looijenga--Swierstra~\cite{LS}. We know nowadays that the hyperplane arrangements corresponding respectively to nodal and chordal degenerations of cubic threefolds are in fact birational to two other arithmetic complex ball quotients (see \cite{CMJL}), first studied by Kond\=o~\cite{Kondo} and appearing as the moduli spaces of K3 surfaces endowed with two different kinds of non-symplectic automorphisms of order three.

The main result of the first part of this paper is to establish an isomorphism between the moduli space~$\cC^\smooth_3$ of smooth cubic threefolds  and the moduli space~$\BCSmod$ of fourfolds deformation equivalent to the Hilbert square of a K3 surface endowed with a special non-symplectic automorphism of order three, following~\cite[Section~7.1]{BCS_ball}. We recall in Section~\ref{s_fano} the construction of Fano varieties of cyclic cubic fourfolds and of their automorphism, as done in~\cite[Example 6.4]{BCS_class}; then in Section~\ref{s_iso} we prove our first main result:

\begin{theorem}\label{th_main1}
The moduli spaces $\cC^\smooth_3$ and $\BCSmod$ are isomorphic.
\end{theorem}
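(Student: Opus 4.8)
The plan is to turn the Fano construction recalled in Section~\ref{s_fano} into a morphism $f\colon\cC^\smooth_3\to\BCSmod$ and to check, via the Beauville--Donagi correspondence, that $f$ intertwines the period map of smooth cubic threefolds of \cite{ACT,LS} with the period map of $\BCSmod$ of \cite[Section~7.1]{BCS_ball}; as both of these are isomorphisms onto the \emph{same} arithmetic ball quotient, $f$ is then forced to be an isomorphism.

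First I would construct $f$. For a smooth cubic threefold $X\subset\IP^4$ let $Y_X\subset\IP^5$ be the associated cyclic cubic fourfold, \ie the triple cover of $\IP^4$ branched along $X$, together with the order-three automorphism $\sigma$ generating the Galois group of the cover; let $\Fano{Y_X}$ be its Fano variety of lines and $\sigma_F$ the induced automorphism. By Beauville--Donagi, $\Fano{Y_X}$ is an \ihsk{} fourfold, the Pl\"ucker class $g\in\HH^2(\Fano{Y_X},\IZ)$ is $\sigma_F$-invariant with $q(g)=6$ (so $g$ is automatically primitive), and the Abel--Jacobi map is a Hodge isometry $\HH^4_\prim(Y_X,\IZ)(1)\xrightarrow{\ \sim\ }g^{\perp}=\HH^2_\prim(\Fano{Y_X},\IZ)$ intertwining $\sigma^{*}$ and $\sigma_F^{*}$. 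To see that $(\Fano{Y_X},\sigma_F)$ defines a point of $\BCSmod$ I would check: (i) $\sigma_F$ is non-symplectic, since $\sigma^{*}$ acts on $\HH^{3,1}(Y_X)$ by a primitive cube root of unity (immediate from the residue description of that line), hence $\sigma_F^{*}$ acts nontrivially on $\HH^{2,0}(\Fano{Y_X})$ through the isometry above; and (ii) the invariant lattice of $\sigma_F$ equals $\IZ g\cong\langle 6\rangle$, since $\HH^4(Y_X,\IQ)^{\sigma}$ is the pullback of $\HH^4(\IP^4,\IQ)=\IQ\,h^2$, whence $\HH^4_\prim(Y_X,\IQ)^{\sigma}=0$ and therefore $(g^{\perp})^{\sigma_F}=0$, while invariant sublattices of $\HH^2(\Fano{Y_X},\IZ)$ are primitively embedded. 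Carrying this out for the relative Fano variety of lines of the universal cyclic cubic fourfold yields the morphism $f$.

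Next I would compare periods. By \cite{ACT,LS}, the period map sending $X$ to the line $\HH^{3,1}(Y_X)$ inside the $\xi$-eigenspace of $(\HH^4_\prim(Y_X,\IC),\sigma^{*})$ is an isomorphism of $\cC^\smooth_3$ onto $\Gamma\backslash(\IB^{10}\setminus\cH)$, where $\cH$ is the union of the nodal and chordal hyperplane arrangements and $\Gamma,\IB^{10}$ are attached to the $\IZ[\omega]$-lattice $(\HH^4_\prim(Y_X,\IZ),\sigma^{*})$. By \cite[Section~7.1]{BCS_ball}, the period map of $\BCSmod$ sending $(\Fano{Y_X},\sigma_F)$ to the line $\HH^{2,0}(\Fano{Y_X})$ inside the $\xi$-eigenspace of $(g^{\perp}\otimes\IC,\sigma_F^{*})$ is an isomorphism onto the ball quotient attached to $(g^{\perp},\sigma_F^{*})$. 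The Abel--Jacobi isometry then identifies these two $\IZ[\omega]$-lattices with their Hermitian forms (which agree up to the sign relating the Beauville--Bogomolov form to the intersection pairing on $\HH^4$), their order-three actions, and---by~(i)---their $\xi$-eigenspaces; hence the two ball quotients coincide and the triangle built from $f$ and the two period maps commutes. Consequently $f$ is the composite of the period isomorphism of $\cC^\smooth_3$ with the inverse of that of $\BCSmod$, hence an isomorphism.

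The main obstacle, I expect, is this last comparison: one must control the Tate twist and the normalisation constant in the Abel--Jacobi isometry carefully enough that the Hermitian forms, hence the complex balls and the arithmetic groups, match on the nose, and one must fix the characters $\xi$ consistently on the two sides so that even the excluded arrangement $\cH$ is literally the same---this is what makes the isomorphism global and not merely birational. The point that the invariant lattice is precisely $\langle 6\rangle$ rather than a proper overlattice, while elementary, is also indispensable.
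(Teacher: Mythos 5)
Your overall strategy coincides with the paper's: both period maps land in the same $10$-dimensional ball, and the isomorphism is read off from the commutative triangle built out of $\rF$ and the two period maps. The problem is that the two points you defer to your final paragraph as matters of ``normalisation'' and ``fixing $\xi$ consistently'' are in fact the entire content of the proof, and your diagnosis of what must be checked there is off target. The identification of the excluded hyperplane arrangements is not about Tate twists or the sign in the Abel--Jacobi isometry. On the cubic-threefold side, by \cite[Theorem~6.1]{ACT} the complement of the image of the period map is the union of the hyperplanes $\delta^\perp$ over the norm-$2$ roots $\delta$ of $S$. On the hyperk\"ahler side, by \cite{BCS_ball} the complement of the image is the union of the hyperplanes orthogonal to the \emph{monodromy birationally minimal} (wall) classes of $S(-1)$; by the classification of these for the $K3^{[2]}$ deformation class \cite{BHT,Mongardi}, they are the classes of square $-2$ \emph{together with} the classes of square $-10$ and divisibility $2$. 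The two arrangements coincide only because the second kind cannot exist in $S(-1)$, which has discriminant $3$. Your proposal never mentions wall divisors at all, and without this input the two images could genuinely differ, in which case no identification of Hermitian forms would save the argument.

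The identification of the arithmetic groups is likewise not automatic. $\Gamma_S^{\rho_0,\xi}$ is by definition the full centraliser of $\rho_0$ in $\Orth(S)$, whereas $\Gamma_{\langle 6\rangle}^{\rho,\xi}$ is cut out inside the \emph{monodromy group} $\Mon^2(L)$, a proper subgroup of $\Orth(L)$, by the conditions of acting trivially on $j(\langle 6\rangle)$ and commuting with $\rho$. To see that the two quotients agree one needs Markman's computation of $\Mon^2$ for this deformation class (the isometries preserving the positive cone), the remark that an isometry fixing the ample polarisation automatically preserves the positive cone, and the extension Lemma~\ref{lem2} to show that every isometry of $S(-1)$ commuting with $\rho_0$ extends to an isometry of $L$ acting by $\pm 1$ on $j(\langle 6\rangle)$ and commuting with $\rho$; the outcome is $\Gamma_{\langle 6\rangle}^{\rho,\xi}\cong\Gamma_S^{\rho_0,\xi}/\langle\pm\id_S\rangle$. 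As written, your argument would at best show that the orbits of one group refine those of the other, which only gives a dominant map between the two quotients, not an isomorphism. These two verifications need to be supplied before the concluding sentence of your second paragraph is justified.
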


This result will be proven by using the period map of irreducible holomorphic symplectic manifolds and Allcock--Carlson--Toledo's  period map for cubic threefolds. One of the consequences of Theorem~\ref{th_main1} is Corollary \ref{cor_aut}: although \apriori inside $\BCSmod$ we could have deformations which are non-isomorphic to Fano varieties of cyclic cubic fourfolds, this is not the case and every element in this moduli space is isomorphic to such a Fano variety of lines.

The rest of the paper is devoted to the exploration of more consequences of this isomorphism, in particular we give a geometric description of the degenerations of the non-symplectic automorphism of order three.
Indeed, the main part of ~\cite{ACT} is devoted to the extension of the period map to singular cubics, either nodal or chordal. In Section~\ref{s_degen} we will use this extension to understand geometrically the degenerations of the automorphism along the corresponding hyperplanes in the arithmetic ball quotient. The final outcome of this study are Propositions~\ref{prop_birat_nodal}~and~\ref{prop_birat_chordal}, which can be summarized as follows:

\begin{theorem}\label{th_main2}
The stable discriminant locus (corresponding to nodal degenerations), respectively the stable chordal locus (corresponding to stable chordal degenerations) are birational to $9$-dimensional moduli spaces of fourfolds which are deformations of a Hilbert square of a K3 surface endowed with a non-symplectic automorphism of order three, having respectively invariant lattice isometric to $U(3)\oplus\langle -2\rangle$ and to $U\oplus\langle -2\rangle$. 
\end{theorem}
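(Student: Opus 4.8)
The strategy is to transport the degeneration picture of Allcock--Carlson--Toledo through the isomorphism of Theorem~\ref{th_main1} and to recognize the two resulting arithmetic sub-ball quotients, via a lattice-theoretic comparison, as the period spaces of the two announced moduli spaces of fourfolds; the equivariant global Torelli theorem then turns the coincidence of period spaces into the stated birationalities. First I would recall from \cite{ACT} (see also \cite{LS}) that the period map extends over the GIT moduli space of \emph{stable} cubic threefolds, and that under this extension the stable discriminant locus (stable cubics with a node) and the stable chordal locus (the secant variety of the rational normal quartic curve together with its $\PGL$-orbit) are sent respectively to two $\Gamma$-orbits of hyperplane sections of $\IB^{10}/\Gamma$, which I denote $\calH_\nodal/\Gamma$ and $\calH_\chordal/\Gamma$; these are $9$-dimensional arithmetic ball quotients. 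Composing with the isomorphism $\cC^\smooth_3\cong\BCSmod$ of Theorem~\ref{th_main1} (more precisely, with the extension of the underlying period map to the relevant partial compactification) identifies the two degeneration loci with the closures, inside $\BCSmod$, of the preimages of $\calH_\nodal/\Gamma$ and $\calH_\chordal/\Gamma$.

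The geometric heart is then to describe what happens to the cyclic cubic fourfold $X_Y$ and to its Fano variety of lines $\Fano{X_Y}$, with the order-three covering automorphism $\sigma$ of Section~\ref{s_fano}, as $Y$ degenerates. For $Y$ with a single node, $X_Y$ acquires an isolated singularity of type $A_2$ fixed by $\sigma$; passing to a $\sigma$-equivariant smooth model of $\Fano{X_Y}$ and analysing the limit mixed Hodge structure on $\HH^4(X_Y,\IZ)_{\mathrm{prim}}$ with its $\IZ[\zeta_3]$-action, one sees that the limiting object is a fourfold of \ihsk-type carrying a non-symplectic automorphism of order three whose coinvariant lattice has dropped in rank by two, and one computes its invariant lattice to be isometric to $U(3)\oplus\langle -2\rangle$ --- the last step amounting to a comparison of discriminant forms, where the prime $3$ is decisive. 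For the stable chordal cubic the analysis is more delicate, the singular locus being a curve rather than a point; here I would use the known link between the chordal degeneration and Kond\=o's chordal family of K3 surfaces (\cite{Kondo,CMJL}) to exhibit the limiting fourfold-with-automorphism and to obtain invariant lattice $U\oplus\langle -2\rangle$. In both cases the invariant lattice $K$ has signature $(1,2)$, so by \cite{BCS_ball} the moduli space of fourfolds of \ihsk-type with a non-symplectic automorphism of order three and invariant lattice $K$ is a $9$-dimensional arithmetic ball quotient, uniformized by the $\zeta_3$-eigenspace of $K^\perp\subset\HH^2$; a further lattice computation then matches this eigenspace ball, together with its monodromy group, with $\calH_\nodal$ (resp.\ $\calH_\chordal$).

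By the equivariant global Torelli theorem for irreducible holomorphic symplectic manifolds of \ihsk-type (Verbitsky, Markman), in the form used in \cite{BCS_ball}, the period map from each of these two moduli spaces onto the corresponding arithmetic sub-ball quotient is an open embedding, hence birational. Composing with the two previous steps, the moduli space with invariant lattice $U(3)\oplus\langle -2\rangle$ is birational to $\calH_\nodal/\Gamma$, hence to the stable discriminant locus, and likewise the moduli space with invariant lattice $U\oplus\langle -2\rangle$ is birational to the stable chordal locus. This is precisely Propositions~\ref{prop_birat_nodal} and~\ref{prop_birat_chordal}, hence Theorem~\ref{th_main2}.

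The hard part is the geometric analysis of the degeneration together with the attendant lattice bookkeeping: one must simultaneously control the singularity acquired by $X_Y$, the existence and $K3^{[2]}$-deformation type of a $\sigma$-equivariant crepant model of $\Fano{X_Y}$, the resulting automorphism, and --- above all --- the exact $3$-adic effect of the degeneration, the whole point being to distinguish a $U(3)$ summand (discriminant group $(\IZ/3)^2$) from a $U$ summand (unimodular), which is exactly what separates the nodal from the chordal case. This comparison has to be carried out with care at the prime $3$, against both the ACT hyperplane arrangement and Kond\=o's K3 lattices; once the discriminant forms of $K$ and of its orthogonal complement are pinned down on both sides, the equality of arithmetic ball quotients, and with it the birationality, follows formally.
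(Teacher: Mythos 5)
Your overall architecture matches the paper's: extend the ACT period map over the stable loci, recognize $\calH_\nodal/\Gamma$ and $\calH_\chordal/\Gamma$ as $9$-dimensional ball quotients, show that the two moduli spaces of fourfolds with automorphism have the same period domains and arithmetic groups, and conclude by the global Torelli theorem in the form of \cite{BCS_ball}. The lattice bookkeeping you flag as decisive is indeed the content of Lemma~\ref{lem_saturation}: the invariant lattice of the degenerate automorphism is the saturation $T_\delta$ of $j(\langle 6\rangle)\oplus R_\delta$ in $L$, where $R_\delta\cong A_2(-1)$ is the degeneracy lattice, and the nodal/chordal dichotomy is exactly whether this direct sum is primitive in $L$ (equivalently, whether $R_\delta^\perp$ is unimodular, Lemma~\ref{lem_chordal_unimodular}), giving $U(3)\oplus\langle-2\rangle$ versus $U\oplus\langle-2\rangle$.

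Where you diverge is in the geometric heart, and this is where your proposal is essentially a placeholder rather than a proof. You propose to pass to a $\sigma$-equivariant smooth model of $\Fano Y$ and analyse the limit mixed Hodge structure; the paper does neither. Instead it exhibits the limiting object concretely: for a nodal cubic threefold the cyclic fourfold $Y$ has an $A_2$-point $p$, and projection from $p$ identifies $\Fano Y$ birationally with $\Sigma_\nodal^{[2]}$, where $\Sigma_\nodal=Q\cap K$ is the explicit K3 surface cut out in the hyperplane $x_0=0$; the covering automorphism restricts to the Artebani--Sarti automorphism $\tau_\nodal$ with invariant lattice $U(3)$, and the fourfold with automorphism carrying the degenerate representation $\rho_\delta$ is simply $(\Sigma_\nodal^{[2]},\tau_\nodal^{[2]})$, a \emph{natural} automorphism on a Hilbert square. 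The chordal case is handled the same way with Kond\=o's elliptic K3 surface $\Sigma_\chordal$. This sidesteps entirely the existence and deformation-type questions for an equivariant crepant model that your route would have to settle (note that $\Fano Y$ is a singular symplectic variety here, and its relation to a smooth $K3^{[2]}$-type manifold is precisely the birational map to $\Sigma^{[2]}$, not a resolution). Two further points your sketch glosses over: the period map on the full moduli space $\cN_{T_\delta}^{\rho_\delta,\xi}$ is \emph{not} injective once the invariant lattice has rank $3$, so the paper must restrict to the $K(T_\delta)$-general locus $\cN_{K(T_\delta)}^{\rho_\delta,\xi}$ before invoking the Torelli-type bijection; and the hyperplane arrangements removed on the K3 side ($\calH_{U(3)}$) and on the fourfold side ($\calH_{T_\delta}\cup\calH'_{T_\delta}$) do not coincide, which is why the conclusion is only a birational equivalence and not an isomorphism.
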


Finally, in Section~\ref{s_pfaff}, looking at Hassett divisors on the moduli space of cubic fourfolds, we produce a Pfaffian cyclic cubic fourfold, thus we show:

\begin{corollary}\label{cor_main3}
There exists a smooth complex K3 surface whose Hilbert square admits a non-natural non-symplectic automorphism  of order three, with fixed locus isomorphic to the Fano surface of a cubic threefold
and invariant lattice isometric to~$\langle 6 \rangle$.
\end{corollary}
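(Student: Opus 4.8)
The strategy is to locate a cubic fourfold that is simultaneously \emph{cyclic} (a triple cover of $\IP^4$ branched along a cubic threefold, so that it carries the order-three non-symplectic automorphism of Section~\ref{s_fano}) and \emph{Pfaffian} (so that its Fano variety of lines is, by the work of Beauville--Donagi, isomorphic to the Hilbert square of a genuine K3 surface rather than merely deformation equivalent to one). First I would recall that Pfaffian cubic fourfolds form the Hassett divisor $\cC_{14}$ in the moduli space of cubic fourfolds, while cyclic cubic fourfolds form an irreducible locus of dimension $10$ (parametrized by the cubic threefold branch locus). The plan is to show that these two loci meet: one counts parameters, or better, exhibits an explicit equation. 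A cyclic cubic fourfold has the shape $F(x_0,\dots,x_4)+x_5^3=0$; requiring it to be Pfaffian means requiring the $5\times 5$ submaximal Pfaffian of a skew-symmetric $6\times 6$ matrix of linear forms to have this special form. I would produce such a matrix explicitly — the main computational input — so that the resulting cubic fourfold $X$ lies in $\cC_{14}$ and is smooth and cyclic.

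Once $X$ is in hand, I invoke Beauville--Donagi: for a smooth Pfaffian cubic fourfold the Fano variety of lines $\Fano{X}$ is isomorphic to $S^{[2]}$ for an explicit smooth K3 surface $S$ of degree $14$ (the orthogonal linear section of the associated $\Grass(2,6)$). Then the order-three automorphism $\sigma$ of $X$ induced by the cyclic structure acts on $\Fano{X}\cong S^{[2]}$, giving the desired automorphism of a Hilbert square. That $\sigma$ is non-symplectic of order three with invariant lattice $\langle 6\rangle$ is exactly the content of the construction recalled in Section~\ref{s_fano} together with Theorem~\ref{th_main1} (equivalently \cite[Example~6.4]{BCS_class}): the fixed locus of the induced automorphism on $\Fano{X}$ is the Fano surface of lines of the branch cubic threefold, and the invariant lattice in $\HH^2(\Fano{X},\IZ)$ is generated by the polarization class of square $6$.

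The only remaining point is to verify that the automorphism is \emph{not natural}, i.e.\ not induced by an automorphism of the K3 surface $S$ itself (a natural automorphism would have invariant lattice containing the class of the exceptional divisor $\delta$, of square $-2$, hence of rank at least $2$). This follows immediately from the rank-one invariant lattice $\langle 6\rangle$: a natural automorphism fixes $\delta$, so its invariant lattice has rank $\geq 2$, contradiction. Hence $\sigma$ descends to no automorphism of $S$, and in fact it does not even preserve the natural splitting, so it is non-natural in the strongest sense.

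The main obstacle is the explicit construction in the first paragraph: one must exhibit a skew-symmetric $6\times 6$ matrix of linear forms in six variables whose Pfaffian cubic is projectively equivalent to a cyclic cubic $F(x_0,\dots,x_4)+x_5^3$ \emph{and} defines a smooth fourfold whose associated K3 surface is smooth (i.e.\ the relevant linear section of $\Grass(2,6)$ avoids the singular locus of the discriminant). I expect this to be a finite but delicate computation — one can look for the matrix among those with an evident order-three symmetry, reducing the search considerably — and the smoothness checks to be routine once a candidate is found.
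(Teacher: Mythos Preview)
Your overall strategy matches the paper's, but there is a genuine gap. You assert that for a smooth Pfaffian cubic fourfold $X$ the Fano variety $\Fano X$ is \emph{isomorphic} to $S^{[2]}$; in fact Beauville--Donagi \cite[Proposition~5]{BD} only gives a birational map in general, which is an isomorphism precisely when $X$ contains no plane and the dual K3 surface $S$ contains no line. Without these two conditions you only transport $\sigma$ to a birational self-map of $S^{[2]}$, not to a biregular automorphism, and the corollary does not follow. The paper's Proposition~\ref{prop_pfaff} therefore proves, on top of smoothness of $Y$, $C$ and $\Sigma$, that the constructed cyclic Pfaffian fourfold contains no plane and that its dual K3 contains no line; these are not routine smoothness checks. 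The absence of planes in $Y$ is reduced, via the cyclic-cover structure, to showing that the branch cubic threefold $C$ contains no \emph{triple line}, which in turn requires a local analysis of the Fano surface $\Fano C$ following Murre. The absence of lines on $\Sigma$ is checked by computing that $\Fano\Sigma\subset\IP(\wedge^2 W^\circ)$ is empty. Both verifications ultimately rely on an explicit example (Appendix~\ref{app_example}).

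Two smaller points. First, the Pfaffian of a $6\times 6$ skew-symmetric matrix of linear forms is itself a cubic; there is no need for $5\times 5$ sub-Pfaffians. Second, the paper's construction of a cyclic Pfaffian is more structured than a search among matrices with an order-three symmetry: one fixes a nondegenerate form $\varphi_0$, expands $\Pf(x_0\varphi_0+\varphi)=x_0^3+x_0^2 h(\varphi)+x_0 q(\varphi)+\Pf(\varphi)$, and then chooses $Z$ to be a $5$-dimensional totally isotropic subspace of $\ker(h)$ for $q$, so that on $W=\IC\varphi_0\oplus Z$ the Pfaffian automatically has the cyclic shape $x_0^3+\Pf(\varphi)$. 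This reduces the explicit search to finding a good isotropic $Z$, which is what the appendix exhibits. Your argument for non-naturality via the rank-one invariant lattice is correct and is exactly what the paper uses.
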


We further explicitly describe this automorphism in terms of the Pfaffian geometry of the K3 surface, in analogy with Beauville's construction of the non-natural non-symplectic involution on the Hilbert scheme of a general quartic surface~\cite{B_remarks}.

\subsection*{Acknowledgements}
The second author was partially supported by FIRB 2012 ``Moduli spaces and their applications" and by ``Laboratoire Internationale LIA LYSM".
The authors thank Klaus Hulek, Shigeyuki Kond\=o, Christian Lehn, Gregory Sankaran and Davide Veniani for their helpful comments and suggestions.

\section{Fano varieties of lines of cyclic cubic fourfolds}\label{s_fano}

We work over the field of complex numbers.

\subsection{Cyclic cubic fourfolds}

A general element of the linear system $\ls{\cO_{\proj 4} (3)}$ defines a smooth cubic threefold~$C$ in~$\proj 4$. The ramified triple covering $Y\to\proj 4$ branched along~$C$ is then a cyclic cubic fourfold in~$\proj 5$. 
We denote by $\sigma$ a generator of the fundamental group of the covering, 
it is a biregular automorphism of order three of~$Y$. 

By Hassett~\cite[Proposition~2.1.2]{Hassett} the lattice $\HH^4(Y,\IZ)$ endowed with the Poincar\'e pairing $\langle\alpha,\beta\rangle_Y\coloneqq\int_Y\alpha\beta$ is an odd unimodular 
lattice isometric to 
$$
\Lambda\coloneqq\langle 1\rangle^{\oplus 21}\oplus\langle -1\rangle^{\oplus 2},
$$
where $\langle m\rangle$
denotes the rank one lattice with quadratic form taking the value $m\in\IZ$ on the generator. 
Denote by $h\in\HH^2(\proj 5,\IZ)$ the class of a hyperplane section. 
The class $\theta_Y\coloneqq h^2_{|Y}$ in~$\HH^4(Y,\IZ)$ has square~$3$ and the primitive cohomology space $\HH^4_\prim(Y,\IZ)$ is by definition the orthogonal 
complement of $\theta_Y$ in $\HH^4(Y,\IZ)$. Then $\HH^4_\prim(Y,\IZ)$ is an even lattice of signature $(20,2)$, isometric to the lattice 
$$ 
S\coloneqq U^{\oplus 2}\oplus E_8^{\oplus 2}\oplus A_2,
$$ 
where $U$ is the hyperbolic plane and $E_8,A_2$  are the positive definite root lattices (see also~\cite[proof of Theorem~1.7]{ACT} or \cite[Section~1]{LS} for another point of view).
We fix for the rest of this paper a polarization class $\kappa\in\Lambda$ of square $3$ and a primitive embedding $S\hookrightarrow\Lambda$ such that $(\IZ\kappa)^\perp=S$.

Denote by $f(x_0,\ldots,x_4)=0$ an equation of $C$. Then $Y$ has equation 
$$
F(x_0,\ldots,x_5)\coloneqq f(x_0,\ldots,x_4)+x_5^3
$$
and the automorphism $\sigma$ maps $(x_0,\ldots,x_4,x_5)$ to $(x_0,\ldots,x_4,\xi x_5)$, where $\xi$ is a primitive third root of the unity that we fix for the rest of this paper. By Griffiths residue theorem, the Hodge group $\HH^{3,1}(Y)$ is one-dimensional, generated by $\Res\left(\frac{\omega_{\proj 5}}{F^2}\right)$, where 
$
\omega_{\proj 5}=\sum_{i=0}^5 (-1)^i x_i dx_0\wedge\ldots\wedge \widehat{dx_i}\wedge \ldots \wedge dx_5.
$
Since the residue map $\Res\colon\HH^5(\proj 5\setminus Y,\IC)\to\HH^4(Y,\IC)$ is $\sigma^\ast$-equivariant, we compute  $\sigma^\ast\Res\left(\frac{\omega_{\proj 5}}{F^2}\right)=\xi\Res\left(\frac{\omega_{\proj 5}}{F^2}\right)$. Denoting by $\HH^4_\prim(Y,\IC)_{\xi}$ the eigenspace  of $\sigma^\ast$ associated to the eigenvalue $\xi$, the line $\HH^{3,1}(Y)\subset \HH^4_\prim(Y,\IC)_{\xi}$ is called, following Allcock--Carlson--Toledo~\cite{ACT}, the \emph{period of the cubic threefold $C$}.

%%%%%%%%%%%%%%%%%%%%%%%%%%%%%%%%%%%%%%%%%%%%%
%%%%%%%%%%%%%%%%%%%%%%%%%%%%%%%%%%%%%%%%%%%%%

\subsection{Fano variety of lines on cyclic cubic fourfolds}\label{ss_fano}

Consider the Fano variety of lines $\Fano Y$, which parametrizes the projective lines contained in $Y$. 
By Beauville--Donagi~\cite{BD}, the variety $\Fano Y$ is irreducible holomorphic symplectic, 
deformation equivalent to the Hilbert square of a K3 surface. As a consequence, 
the second cohomology group with integer coefficients $\HH^2(\Fano Y,\IZ)$, endowed with the Beauville--Bogomolov--Fujiki bilinear form $\langle-,-\rangle_{\Fano Y}$ is isometric to the lattice 
$$
L\coloneqq U^{\oplus 3}\oplus E_8(-1)^{\oplus 2}\oplus\langle -2\rangle.
$$

The automorphism $\sigma$ of $Y$ leaves globally invariant the lines of $Y$ contained in the cubic $C$, so the fixed locus of 
the action of $\sigma$ induced on $\Fano Y$ is isomorphic to the Fano variety of lines~$\Fano C$, which is a surface of general type with Hodge numbers ${h^{1,0}=5}$, ${h^{2,0}=10}$ and ${h^{1,1}=25}$ (see~\cite[Example~6.4]{BCS_class},\cite[Formula~(0.7)]{CG}). It follows from the lattice-theoretical classification of non-symplectic automorphisms in~\cite[Example~6.4]{BCS_class} that the invariant lattice $\HH^2(\Fano Y,\IZ)^\sigma$ is isometric to $\langle 6 \rangle$, with orthogonal complement in $\HH^2(\Fano Y,\IZ)$, denoted by $\HH^2_\orth(\Fano Y,\IZ)$, isometric to the lattice
$S(-1)= U^{\oplus 2}\oplus E_8(-1)^{\oplus 2}\oplus A_2(-1)$.

The apparent coincidence with the results of the preceding section is due to the Abel--Jacobi map.
Denote by $Z\subset \Fano Y\times Y$ the universal family and by 
$p$, \resp~$q$, the projection to $\Fano Y$, \resp~$Y$.
By \cite[Proposition 4]{BD} the 
Abel--Jacobi map 
$$
A\coloneqq p_\ast q^\ast\colon \HH^4(Y,\IZ)\lra \HH^2(\Fano Y,\IZ)
$$ 
is an isomorphism of Hodge 
structures. Denote by 
$\theta_{\Fano Y}\in\HH^2(\Fano Y,\IZ)$ the class of a hyperplane section in the Pl\"ucker embedding. It is easy to see geometrically that
$A(\theta_Y)=\theta_{\Fano Y}$. Since $Y/\langle\sigma\rangle\cong\proj 4$, we can directly deduce that the invariant lattice $\HH^4(Y,\IZ)^\sigma$ has rank one, hence it is generated by
$\theta_Y$. By $\sigma$-equivariance of $A$ we observe that
the invariant lattice $\HH^2(\Fano Y,\IZ)^\sigma$ is  generated by $\theta_{\Fano Y}$. 

Moreover, by \cite[Proposition~6]{BD} we have $A(\HH^4_\prim(Y,\IZ))=\HH^2_\orth(\Fano Y,\IZ)$ and
$$
\langle A(\alpha),A(\beta)\rangle_{\Fano Y}=-\langle\alpha,\beta\rangle_Y\quad \forall \alpha,\beta\in\HH^4_\prim(Y,\IZ).
$$

Since  $A(\HH^{3,1}(Y))= \HH^{2,0}(\Fano Y)$, by $\sigma$-equivariance
of $A$ we deduce from the results of the previous section that 
the line $\HH^{2,0}(Y)$, which is the \emph{period of the irreducible holomorphic symplectic manifold} $\Fano Y$, lives in the eigenspace $\HH^2_\orth(\Fano Y,\IC)_{\xi}$ of $\HH^2_\orth(\Fano Y,\IC)$ associated to the eigenvalue $\xi$. The transition from the lattice $S$ to the opposite lattice $S(-1)$ is explained by the Abel--Jacobi map $A$ being an anti-isometry at the level of the primitive cohomology.

%%%%%%%%%%%%%%%%%%%%%%%%%%%%%%%%%%%%%%%%%%%%%
%%%%%%%%%%%%%%%%%%%%%%%%%%%%%%%%%%%%%%%%%%%%%

\section{Occult period maps and unexpected isomorphisms}\label{s_iso}

\subsection{Two useful lemmas}

We state two classical results of Nikulin~\cite{Nikulin} on lattice theory which will be used several times below. For any integral lattice $S$, we denote by $S^\ast$ the dual lattice and by $D_S\coloneqq S^\ast/S$ its discriminant group, 
endowed with its quadratic form $q_S$. The \emph{length} $\ell(S)$ of $D_S$ is its minimal number of generators.

\begin{lemma}\label{lem1}\cite[Theorem~1.14.2]{Nikulin} Assume that $S$ is an even indefinite lattice. If $\rk(S)\geq \ell(S)+2$, then the natural map $\Orth(S)\to\Orth(D_S)$ is surjective.

\end{lemma}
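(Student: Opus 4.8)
The plan is to reduce the assertion to $p$-adic lifting problems, solve each of them locally, and then reassemble the local solutions globally by strong approximation, using the indefiniteness of $S$ in an essential way. Recall that the finite quadratic form $q_S$ on $D_S$ decomposes as an orthogonal direct sum $q_S=\bigoplus_p (q_S)_p$ over the primes $p$, where $(q_S)_p$ is supported on the $p$-part of $D_S$; correspondingly $\Orth(D_S)=\prod_p\Orth\big((q_S)_p\big)$. Moreover $D_S$ and $q_S$ are read off from the $\IZ_p$-lattices $S_p\coloneqq S\otimes\IZ_p$, and giving an isometry of $S$ amounts to giving a compatible family $(\psi_p)_p$ of isometries of the $S_p$ which together lie in $\Orth(S\otimes\IQ)$. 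So I would first prove, for every prime $p$, that the reduction map $\Orth(S_p)\to\Orth\big((q_S)_p\big)$ is surjective, and then glue.

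For the local step with $p$ odd, one uses the Jordan decomposition of $S_p$ as an orthogonal sum of a unimodular lattice and rank-one blocks $\langle p^k u\rangle$ with $k\geq 1$: the discriminant form only involves the non-unimodular blocks, so any isometry of $(q_S)_p$ lifts by acting on those blocks and by the identity on the unimodular part, and no rank hypothesis is needed. The prime $p=2$ is the delicate case: the $2$-adic Jordan decomposition only involves blocks of rank $\leq 2$, which may be of ``even'' or ``odd'' type, and some isometries of $(q_S)_2$ do \emph{not} extend naively. This is precisely where the hypothesis $\rk(S)\geq\ell(S)+2$ enters: it forces a unimodular $2$-adic summand of rank at least $2$, and the obstruction to extending a given isometry is killed by composing it with suitable ``stabilizing'' isometries supported on that summand together with the block being modified. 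Carrying out this $2$-adic bookkeeping is, I expect, the main obstacle of the whole proof.

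Once isometries $\psi_p\in\Orth(S_p)$ inducing the prescribed $\phi_p$ on $(q_S)_p$ have been produced for all $p$ (with $\psi_p=\id$ for almost all $p$), I would globalize as follows. Pick any $\psi_\infty\in\Orth(S\otimes\IR)$; then $(\psi_\infty,(\psi_p)_p)$ is an element of the adelic orthogonal group of $S\otimes\IQ$. Because $S$ is indefinite, $\Orth(S\otimes\IR)$ is noncompact and its spinor norm is surjective, so after correcting $\psi_\infty$ (which does not affect the action on the finite group $D_S$) one may assume the adelic element lies in the image of the adelic spin group. Strong approximation for $\mathrm{Spin}$ over $\IQ$ — valid for indefinite forms of rank $\geq 3$, the rank $2$ case of the lemma being trivial since then $S\cong U$ — then yields a rational isometry $g\in\Orth(S\otimes\IQ)$ that is $p$-adically integral and congruent to $\psi_p$ modulo $S_p$ for every $p$; hence $g$ preserves $S$ and induces $\phi=(\phi_p)_p$ on $D_S$, which is the sought lift. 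This is exactly Nikulin's argument in \cite[\S1.14]{Nikulin}; an equivalent, more lattice-theoretic reformulation realizes $\phi$ through the uniqueness up to $\Orth(U^{\oplus \rk(S)})$ of the primitive embedding $S\hookrightarrow U^{\oplus\rk(S)}$ obtained by gluing $S$ to $S(-1)$ along the graph of $\phi$.
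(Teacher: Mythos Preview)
The paper does not prove this lemma: it is stated purely as a citation of \cite[Theorem~1.14.2]{Nikulin}, with no argument given. There is therefore nothing in the paper to compare your attempt against.

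That said, your outline is a faithful sketch of Nikulin's own strategy---local lifting over each $\IZ_p$ followed by globalization via strong approximation for the spin group, with indefiniteness guaranteeing that the archimedean place is noncompact. If you intend to turn this into a self-contained proof, be aware that the $2$-adic step genuinely is the crux and occupies a substantial portion of Nikulin's \S1; your heuristic that the hypothesis $\rk(S)\geq\ell(S)+2$ buys a unimodular $2$-adic summand of rank at least~$2$, which then absorbs the lifting obstruction, is on the right track, but the combinatorics of even versus odd $2$-adic Jordan constituents is delicate and not easily summarized in a paragraph. For the purposes of the present paper the bare citation is all that is offered or needed.
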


\begin{lemma}\label{lem2}
Let $M$ be an integral non-degenerate lattice, $S$ a primitive sublattice of~$M$ and $T\coloneqq S^\perp$ its orthogonal complement. Let $\phi_S\in\Orth(S)$ and $\phi_T\in\Orth(T)$. The isometry $\phi\coloneqq(\phi_S,\phi_T)$ of $S\oplus T$ extends to an isometry of $M$ if and only if its action on $D_S\oplus D_T$ leaves globally invariant the subgroup $\frac{M}{S\oplus T}$. In particular, if $\phi_S$ acts by $\epsilon\,\id$ on $D_S$, with $\epsilon\in\{-1,1\}$, then $(\phi_S,\epsilon\,\id_T)$ extends to an isometry of $M$.
\end{lemma}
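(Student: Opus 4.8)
The plan is to reduce the statement to the standard dictionary between overlattices of $S\oplus T$ contained in its dual lattice and subgroups of the discriminant group $D_S\oplus D_T$, combined with the fact that an integral isometry extends uniquely to a form-preserving automorphism of the rational span and that such an extension automatically preserves the dual lattice.

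First I would put $N\coloneqq S\oplus T$. Since $M$ is non-degenerate and $S$ is primitive with $T=S^\perp$, one has $S\cap T=0$ and $\rk(S)+\rk(T)=\rk(M)$, so $N$ is a non-degenerate finite-index sublattice of $M$ with $N\otimes\IQ=M\otimes\IQ$. The isometry $\phi=(\phi_S,\phi_T)$ of $N$ extends uniquely to a $\IQ$-linear, form-preserving automorphism $\phi_\IQ$ of $M\otimes\IQ$. As $\phi_\IQ$ preserves the rational pairing and stabilizes $N$, it stabilizes the dual $N^\ast=\{x\in M\otimes\IQ\colon\langle x,N\rangle\subseteq\IZ\}=S^\ast\oplus T^\ast$; being invertible with inverse $(\phi^{-1})_\IQ$ of the same type, it in fact restricts to an automorphism of $N^\ast$, hence induces an automorphism $\bar\phi$ of $D_N=N^\ast/N=D_S\oplus D_T$. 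By construction $\bar\phi=\bar\phi_S\oplus\bar\phi_T$, where $\bar\phi_S$ and $\bar\phi_T$ are the actions of $\phi_S$ on $D_S$ and of $\phi_T$ on $D_T$.

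Next I would invoke the bijection, equivariant for these actions, between the overlattices of $N$ contained in $N^\ast$ and the subgroups of $D_N$, under which $M$ corresponds to $H\coloneqq M/N=M/(S\oplus T)\subseteq D_S\oplus D_T$. Now $\phi_\IQ$ restricts to an isometry of $M$ if and only if $\phi_\IQ(M)=M$; since $\phi_\IQ$ is invertible and $(\phi^{-1})_\IQ$ also stabilizes $N$, the mere inclusion $\phi_\IQ(M)\subseteq M$ already forces equality. Passing to the quotient $D_N$, this holds precisely when $\bar\phi(H)=H$, i.e. when the action of $\phi$ on $D_S\oplus D_T$ leaves $M/(S\oplus T)$ globally invariant, which is the claimed equivalence. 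For the final assertion, if $\phi_S$ acts on $D_S$ as $\epsilon\,\id$ and we choose $\phi_T=\epsilon\,\id_T$, which acts on $D_T$ as $\epsilon\,\id$, then $\bar\phi=\epsilon\,\id_{D_N}$ stabilizes every subgroup of $D_N$; in particular $\bar\phi(H)=H$, so $(\phi_S,\epsilon\,\id_T)$ extends to $M$.

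The argument is essentially formal once the overlattice/discriminant dictionary is available, so I do not expect a genuine obstacle; the only point requiring care is the passage from ``$\phi_\IQ$ preserves the form on $M\otimes\IQ$ and stabilizes $N$'' to ``$\phi_\IQ$ stabilizes $N^\ast$ and descends to $D_N$'', namely the fact that the rational extension of an integral isometry is automatically an isometry of the dual lattice. I would also note that the integrality of $M$ forces $H$ to be isotropic for the discriminant form of $D_N$, but this plays no role in the present statement.
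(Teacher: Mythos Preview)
Your proof is correct and follows essentially the same approach as the paper's, namely the overlattice/discriminant-group dictionary: both identify $M/(S\oplus T)$ with a subgroup of $D_S\oplus D_T$ and reduce the extension question to invariance of this subgroup. Your version is in fact more complete than the paper's terse argument, which only spells out the ``if'' direction by writing $\lambda m=s+t$ and checking $\phi(m)=(\phi_S(s)+\phi_T(t))/\lambda\in M$, leaving the converse and the ``in particular'' implicit.
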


\begin{proof}
The inclusions $S\oplus T\subset M\subset M^\ast\subset S^\ast\oplus T^\ast$ produce a subgroup $\frac{M}{S\oplus T}\subset D_S\oplus D_T$. For any $m\in M$, one has $\lambda m=s+t\in S\oplus T$ for some $\lambda\in\IZ$. Consider $m=\frac{s+t}{\lambda}\in\frac{M}{S\oplus T}$. If this group is $\phi$-stable in $D_S\oplus D_T$, then $\phi(m)=\frac{\phi_S(s)+\phi_T(t)}{\lambda}\in\frac{M}{S\oplus T}$, so $\phi(m)\in M$ and $\phi$ extends to an isometry of $M$.
\end{proof}

\subsection{Moduli space of smooth cubic threefolds}

We denote by $\cC^\stable_n$ the GIT moduli space of $\PGL_{n+2}(\IC)$-stable points in the linear system $\ls{\cO_{\proj {n+1}}(3)}$. Inside this quasi-projective variety,
the locus $\cC^{\smooth}_n$ whose points parametrize projective equivalence classes of smooth cubic $n$-folds in $\proj {n+1}$ is the open set determined by the nonvanishing of the discriminant (see~\cite[Chapter~5]{Mukai}). 

By Allock--Carlson--Toledo~\cite{ACT}, the moduli space $\cC^\smooth_3$ is isomorphic to the quotient by an arithmetic group of the complementary of a hyperplane arrangement in a $10$-dimensional complex ball. 
We briefly recall the main ingredients for later use. Given $C\in\cC^\smooth_3$ (we choose a representative of the projective equivalence class) we denote by $Y\to\IP^4$ the ramified cyclic triple covering branched along $C$ and by~$\sigma$ the generator of the covering group which acts by multiplication by $\xi$ on the period $\HH^{3,1}(Y)$. Since $\HH^4_\prim(Y,\IZ)$ has no fixed points for the action of $\sigma$, 
it inherits a natural structure of free module over the ring of Eisenstein integers $\cE\coloneqq\IZ[\xi]$, which does not depend on the cubic $C$. Hence any \emph{marking} (\ie isometry of $\IZ$-lattices) $\eta\colon\HH^4_\prim(Y,\IZ)\to S$ induces a representation 
$$
\IZ/3\IZ\to\Orth(S), \quad 1\mapsto \eta\circ\sigma^\ast\circ\eta^{-1}\eqqcolon \rho_0
$$ 
whose isomorphism class does not depend on $C$ by the local topological triviality of the family of cubic threefolds. We call \emph{framing} a marking compatible with the representation, in the sense that
$\eta\circ\sigma^\ast=\rho_0\circ\eta$. Said differently, a framing $\eta\colon \HH^4_\prim(Y,\IZ)\to S$ is an isometry of $\IZ$-lattices and an isomorphism of $\cE$-modules. 

We denote by  $\cF^\smooth_{3}$ the moduli space of framed smooth cubic threefolds: it parametrizes pairs $(C,\eta)$, where $C\in\cC^\smooth_3$ and $\eta\colon \HH^4_\prim(Y,\IZ)\to S$ is a framing considered up to composition on the target with the group $\mu_6\coloneqq\{\pm 1,\pm\rho_0,\pm\rho_0^2\}$ of $6$th roots of the unity, which is also  the group of units of $\cE$. We denote by $\Gamma_S^{\rho_0,\xi}$ the group of isometries~$\gamma$ of~$S$ such that $\gamma\circ\rho_0=\rho_0\circ\gamma$. The group $\Gamma_S^{\rho_0,\xi}$ is equivalently the group of isometries of the $\cE$-valued Hermitian form on $S$ naturally associated to~$\rho_0$ (see Section~\ref{ss_hermitian}). It acts on~$\cF^\smooth_3$ on the left by
$(\gamma,(C,\eta))\mapsto (C,\gamma\circ\eta))$ for all $\gamma\in\Gamma_S^{\rho_0,\xi}$,
and the units act trivially so we can consider the action of the projective group $\IP\Gamma_S^{\rho_0,\xi}\coloneqq\Gamma_S^{\rho_0,\xi}/\mu_6$. We  thus have a natural isomorphism $\cC^\smooth_3\cong\leftquo{\IP\Gamma_S^{\rho_0,\xi}}{\cF^\smooth_{3}}$. 

Any framing $\eta\colon \HH^4_\prim(Y,\IZ)\to S$ induces an isomorphism $\eta\colon\HH^4_\prim(Y,\IC)_{\xi}\to S_{\xi}$, where~$S_{\xi}$ is the eigenspace of $S\otimes_\IZ\IC$ for the eigenvalue $\xi$ of the isometry $\rho_0$. The Poincar\'e pairing on $\HH^4(Y,\IZ)$ extends to a hermitian form ${h_Y(\alpha,\beta)\coloneqq\langle\alpha,\bar\beta\rangle_Y}$ on $\HH^4(Y,\IC)$. Using the Hodge--Riemann relations~\cite[p.123]{GH} we get that $\HH^4(Y,\IC)_{\xi}$ has signature $(10,1)$ and that the line $\HH^{3,1}(Y)$ is negative-definite.
Denoting by $h_S(\alpha,\beta)\coloneqq\langle \alpha,\bar\beta\rangle_S$ the hermitian form on $S\otimes_\IZ\IC$ induced by the integral bilinear form $\langle-,-\rangle_S$ on $S$, we see that the line $\eta(\HH^{3,1}(Y))$ defines a point in the $10$-dimensional complex ball
$$
\Omega_S^{\rho_0,\xi}\coloneqq\left\{\omega\in\IP\left(S_{\xi}\right)\,|\,h_S(\omega,\omega)<0\right\}\cong \IB^{10}.
$$
The period map $\cF^\smooth_{3}\to\Omega_S^{\rho_0,\xi}$, $(C,\eta)\mapsto \eta(\HH^{3,1}(Y))$ commutes with the action of $\IP\Gamma_S^{\rho_0,\xi}$ and defines a quotient period map
$$
\cP_S^{\rho_0,\xi}\colon \cC^\smooth_3\lra \frac{\IB^{10}}{\IP\Gamma_S^{\rho_0,\xi}}.
$$
By~\cite[Theorem~1.9]{ACT} the period map $\cP_S^{\rho_0,\xi}$ is an isomorphism onto its image, which is the complementary of the quotient of a hyperplane arrangement (see Section~\ref{s_degen}).

\subsection{Relation with the period map for cubic fourfolds}
\label{ss_compare}
As initial data, we choose a cubic threefold $\tilde C\in\cC^\smooth_3$, its associated cyclic fourfold ${\tilde Y\in\cC^\smooth_4}$ and a framing $\tilde\eta\colon\HH^4_\prim(\tilde Y,\IZ)\to S$.
The period of $\tilde Y$ is the line $\HH^{3,1}(\tilde Y)$ inside $\HH^4_\prim(\tilde Y,\IC)$. The period domain
$\Omega_S\coloneqq\left\{\omega\in\IP\left(S_\IC\right)\,|\langle\omega,\omega\rangle_S=0,h_S(\omega,\omega)<0\right\}$
is isomorphic to the Grassmannian variety of negative definite $2$-planes in $S$, and has two connected components interchanged by complex conjugation. We denote by $\Omega_S^\circ$ the connected component which contains $\tilde\eta(\HH^{3,1}(\tilde Y))$.
By deformation of the complex structure we get a holomorphic period map from the moduli space of \emph{marked} cubic fourfolds $\cC^\smooth_{4,\marked}\to \Omega_S^\circ$.
By results of Ebeling and Beauville (see \cite[Theorem~2]{B_monodromy}), the monodromy group of a cubic fourfold is isometric to the group
$\Gamma_S$ of isometries of $S$ which act trivially on the discriminant group $D_S$ and which respect the orientation of a negative-definite $2$-plane in $S$ (see Laza~\cite[\S 2.2]{Laza_period}). Thus $\Gamma_S$ acts on $\Omega_S^\circ$ and by a theorem of Voisin~\cite{VoisinCubic}, the period map $\cP_S\colon\cC^\smooth_4\to \frac{\Omega_S^\circ}{\Gamma_S}$ is an open embedding. 
Note that $\Gamma_S^{\rho_0,\xi}$ is not a subgroup of $\Gamma_S$: since $D_S\cong\IZ/3\IZ$, the elements of 
$\Gamma_S^{\rho_0,\xi}$ act by $\pm 1$ on $D_S$ (see Lemma~\ref{lem1}). Observe that $\rho_0\in\Gamma_S$: the isometry~$\rho_0$ comes from the covering automorphism of $Y$ so it fixes the polarization~$\theta_Y$ and it acts trivially on the discriminant group $D_S\cong \IZ\theta_Y/3\IZ\theta_Y$ since the lattice $\HH^4(Y,\IZ)$ is unimodular. Thus the $\IP\Gamma_S^{\rho_0,\xi}$-orbits in $\Omega_S$ are contained in the corresponding $\Gamma_S$-orbits and we have a commutative diagram:
$$
\xymatrix{\cC^\smooth_3\ar@{^(->}[r]^{\cP_S^{\rho_0,\xi}}\ar[d]_{\cov} & \frac{\Omega_S^{\rho_0,\xi}}{\IP\Gamma_S^{\rho_0,\xi}}\ar[d]\\
\cC^\smooth_4\ar@{^(->}[r]^{\cP_S} & \frac{\Omega_S^\circ}{\Gamma_S}}
$$
It is well-known that the \emph{covering morphism} $\cov$ mapping $C$ to $Y$ is generically injective (see Beauville~\cite[proof of Theorem~4.6]{B_modcub} and references therein for a similar argument).

\subsection{The Hermitian module}
\label{ss_hermitian}

The ring of Eisenstein integers $\cE\coloneqq\IZ[\xi]$ is euclidean with invertible group $\cE^\ast=\{\pm 1,\pm\xi,\pm\xi^2\}$. Its irreducible elements are the prime numbers $p\in\IZ$ such that $p\equiv 2\mod(3)$ and the elements $x\in\cE$ whose norm $x\bar{x}$ is a prime integer. In particular $\theta\coloneqq \xi-\xi^{-1}$ is irreducible in~$\cE$ and $3=\theta\bar{\theta}$.
Recall that $\cE$ acts on $S$ by $\xi x\coloneqq\sigma(x)$ for all $x\in S$.
Following~\cite[Chapter~1]{ACT}, we define a Hermitian form on $S$ by
$$
H_S(x,y)\coloneqq \theta\left(\langle x,\xi y\rangle_S-\xi\langle x,y\rangle_S\right).
$$
It is an easy exercise to check that $H_S$ is a Hermitian form and that it takes values in the prime ideal $\theta\cE$ of $\cE$. One has $\langle \delta,\delta\rangle_S=2$
if and only if $H_S(\delta,\delta)=3$: this characterizes the \emph{roots} of $S$. For such a root $\delta$, the image of the $\cE$-linear morphism $H_S(-,\delta)\colon S\to \cE$ is an ideal generated by an element $\alpha_\delta$ such that $3\cE\subset\alpha_\delta\cE\subset \theta\cE$. It follows easily that either $H_S(S,\delta)=\theta\cE$ ($\delta$~is called \emph{nodal}) or $H_S(S,\delta)=3\cE$ ($\delta$~is called \emph{chordal}).

%%%%%%%%%%%%%%%%%%%%%%%%%%%%%%%%%%%%%%%%%%%%%
%%%%%%%%%%%%%%%%%%%%%%%%%%%%%%%%%%%%%%%%%%%%%

\subsection{A moduli space of non-symplectic automorphisms}\label{ss_modnonsympl}

We start again from a ramified cyclic covering $Y\to\proj 4$ branched along the cubic $C$, with covering automorphism~$\sigma$ acting by multiplication by $\xi$. Any marking $\eta\colon\HH^4_\prim(Y,\IZ)\to S$ induces through the
 Abel--Jacobi map $A$ an isometry $\eta\circ A^{-1}\colon\HH^2_\orth(\Fano Y,\IZ)\to S(-1)$, which extends to a marking $\HH^2(\Fano Y,\IZ)\to L$ as follows. By~\cite[Theorem~3.7]{BCS_class} 
the lattice $S(-1)$ admits a unique primitive embedding in $L$, up to an isometry of $L$, and its orthogonal complement $S(-1)^\perp$ is isometric to the lattice~$\langle 6\rangle$. We implicitly fix this embedding.
 This determines, up to an isometry of~$L$, a primitive embedding $j\colon \langle 6\rangle\hookrightarrow L$ that we fix for the rest of this paper. 
We extend~$\eta$ to an isometry $\HH^2_\orth(\Fano Y,\IZ)\oplus\IZ\theta_{\Fano Y}\to S(-1)\oplus j(\langle 6\rangle)$ which extends using Lemma~\ref{lem2} to an isometry $\tilde\eta\colon \HH^2(\Fano Y,\IZ)\to L$. The action of the automorphism $\sigma$ defines as above a representation 
$$
\IZ/3\IZ\lra\Orth(L), \quad 1\mapsto \tilde\eta\circ\sigma^\ast\circ\tilde\eta^{-1}\eqqcolon\rho
$$
whose isomorphism class does not depend on the cubic $C$, and whose invariant subrepresentation is $j(\langle 6\rangle)$. The restriction of $\rho$ to $S(-1)$ is nothing else than $\rho_0$ viewed as an isometry of $S(-1)$ instead of $S$.
Following Boissi\`ere--Camere--Sarti~\cite{BCS_ball}, the $4$-tuple $(\Fano Y,\tilde\eta,\sigma,\iota)$ is the initial data of the construction of a \emph{moduli space of $(\rho,j)$-polarized 
irreducible holomorphic symplectic manifolds}. We briefly recall the main ingredients for later use.

Denote by $\cM_{L}$ the moduli space of equivalence classes of pairs $(X,\eta)$ where $X$  is an irreducible holomorphic symplectic manifold 
deformation equivalent to the Hilbert square of a K3 surface and $\eta\colon\HH^2(X,\IZ)\to L$ is a marking (\ie an isometry of $\IZ$-lattices), 
where two pairs $(X,\eta)$ and $(X',\eta')$ are considered equivalent if there exists a biregular isomorphism $f\colon X\to X'$ such that $\eta=\eta'\circ f^\ast$.
 Following~\cite[Section~2]{Markman_Torelli} and ~\cite{BCS_ball} we fix once and for all a connected component $\cM_{L}^\circ$ of $\cM_{L}$ and we define
a $(\rho,j)$-\emph{polarisation} of an irreducible holomorphic symplectic manifold~$X$
deformation equivalent to the Hilbert square of a K3 surface as the data of:
\begin{itemize} 
\item a marking~$\eta$,
\item a primitive embedding $\iota\colon \langle 6\rangle\hookrightarrow\NS(X)$ such that $\eta\circ\iota=j$,
\item an automorphism $\sigma\in\Aut(X)$ such that $\eta$~is a framing for $\sigma\in\Aut(X)$ (\ie $\eta\circ\sigma^\ast=\rho\circ \eta$) and $\sigma_{|\HH^{2,0}(X)}=\xi\id$.
\end{itemize} 
If such an automorphism $\sigma$ exists, then it is uniquely determined by this property (see \cite[Lemme~1.2]{Mongardi_CRAS}).
We denote by $\BCSmodfr$ (the embedding $j$ is 
implicit in this notation) the set of equivalence classes of such tuples $(X,\eta,\sigma,\iota)$, where two tuples $(X,\eta,\sigma,\iota)$ and $(X',\eta',\sigma',\iota')$ are equivalent  if there exists a biregular isomorphism $f\colon X\to X'$ such that $\eta'=f^\ast\circ\eta$, $\sigma'=f\circ\sigma\circ f^{-1}$ and $\iota'=f^\ast\circ \iota$. 
We similarly define the space $\cM_{\langle 6 \rangle}$ parametrizing $j$-polarized triples $(X,\eta,\iota)$ up to equivalence.
We have then the inclusions $\BCSmodfr\subset  \cM_{\langle 6 \rangle}\subset \cM^\circ_{L}$.

\begin{remark} \label{rem_ample}
The polarisation $\iota(\langle 6\rangle)$ contains always an ample class: the variety~$X$ is projective since it admits a non-symplectic automorphism 
(see~\cite[\S 4]{B_remarks}), so if $\ell\in\NS(X)$ is an ample class, the invariant ample class $\ell+\sigma^\ast \ell+(\sigma^\ast)^2 \ell$ 
is necessarily a multiple of the generator of the rank one invariant lattice. Denote $\vartheta\in\iota(\langle 6\rangle)$ the primitive ample invariant class;
since the invariant lattice has rank one, this class is uniquely defined by the automorphism $\sigma$. It follows that
the map $(X,\sigma,\eta,\iota)\mapsto (X,\eta,\vartheta)$ embeds $\BCSmodfr$ in the moduli space of irreducible holomorphic symplectic manifolds polarized by an ample primitive
class of square $6$ studied by Gritsenko--Hulek--Sankaran~\cite{GHS_moduli}.
\end{remark}

Starting from the initial data $(\Fano Y,\tilde\eta,\sigma,\iota)$ as above, we consider the monodromy group $\Mon^2(\Fano Y)\subset \Orth(\HH^2(\Fano Y,\IZ))$ and we define 
$$
\Mon^2(L)\coloneqq \tilde\eta\circ\Mon^2(\Fano Y)\circ\tilde\eta^{-1}\subset\Orth(L).
$$ 
It is easy to check that this group does not depend on the representative of the equivalence class in $\BCSmod$ and that it is conjugated to the monodromy group of any variety deformation equivalent to $\Fano Y$. 
Following~\cite[Section~6]{BCS_ball}, we define the group $\Gamma_{\langle 6\rangle}^{\rho,\xi}$ as the image in $\Orth(S(-1))$ of those elements $\gamma\in\Mon^2(L)$ which act trivially on $j(\langle 6\rangle)$ and commute with $\rho$.
The group $\Gamma_{\langle 6\rangle}^{\rho,\xi}$ acts on  $\BCSmodfr$ by $(\gamma,(X,\eta,\sigma,\iota))\mapsto(X,\gamma\circ\eta,\sigma,\iota)$ and we define $\BCSmod\coloneqq\leftquo{\Gamma_{\langle 6\rangle}^{\rho,\xi}}{\BCSmodfr}$.

Using the properties of the Beauville--Bogomolov--Fujiki quadratic form, we see that the line $\eta(\HH^{2,0}(X))$ defines a point in the $10$-dimensional complex ball
$$
\Omega_{\langle 6\rangle}^{\rho,\xi}\coloneqq\{\omega\in\IP(S(-1)_{\xi})\,|\, h_{S(-1)}(\omega,\omega)>0\}\cong\IB^{10}.
$$
The period map $\BCSmodfr\to\Omega_{\langle 6\rangle}^{\rho,\xi}$, $(X,\eta,\sigma,\iota)\mapsto\eta(\HH^{2,0}(X))$ commutes with the action of $\Gamma_{\langle 6\rangle}^{\rho,\xi}$ and defines a quotient period map
$$
\cP_{\langle 6\rangle}^{\rho,\xi}\colon\BCSmod\lra\frac{\IB^{10}}{\Gamma_{\langle 6\rangle}^{\rho,\xi}}
$$
By~\cite[Theorem~4.5, Theorem~5.6, Section~7.1]{BCS_ball}, the moduli space $\BCSmod$ is connected, 
the period map is an open embedding and its image is the complementary of the quotient of a hyperplane arrangement.
Note that there is a tiny difference between this definition of the moduli space and those of ~\cite{BCS_ball}
since we include $\sigma$ and $\iota$ in the data. Since they are uniquely determined, this is of course equivalent
and our period map is, strictly speaking, the composition of a bijective forgetful map and of the period map of~\cite{BCS_ball}. 
We define the structure of complex manifold of $\BCSmod$ as the one inherited by the one of its period domain.

\begin{theorem}\label{th_isom_mod_spaces}
The moduli spaces $\cC^\smooth_3$ and $\BCSmod$ are isomorphic.
\end{theorem}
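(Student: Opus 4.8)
The plan is to realise the isomorphism through the geometric assignment $C\mapsto\Fano{Y}$ (Fano variety of lines of the associated cyclic cubic fourfold) together with its induced automorphism and Pl\"ucker polarisation, and to recognise that the two period maps land, after the Abel--Jacobi identification, in the \emph{same} arithmetic ball quotient. Starting from a framed smooth cubic threefold $(C,\eta)\in\cF^\smooth_3$, the construction recalled in Section~\ref{ss_modnonsympl} produces, via the Abel--Jacobi isometry $A$ and Lemma~\ref{lem2}, a $(\rho,j)$-polarised tuple $(\Fano{Y},\tilde\eta,\sigma,\iota)\in\BCSmodfr$; I would first check that this defines a holomorphic map $\Psi\colon\cF^\smooth_3\to\BCSmodfr$, since $\Fano{Y}$, $\sigma$ and $\iota$ vary holomorphically with $C$ in families and $\sigma$, $\iota$ are uniquely determined (so no choices enter), while $\tilde\eta$ restricts to $\eta\circ A^{-1}$ on $\HH^2_\orth(\Fano{Y},\IZ)$ and to the fixed $j$ on the invariant part. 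The essential point is that $\Psi$ intertwines the period maps: since $\HH^{2,0}(\Fano{Y})=A(\HH^{3,1}(Y))$ and $h_{S(-1)}=-h_S$, so that the balls $\Omega_{\langle 6\rangle}^{\rho,\xi}$ and $\Omega_S^{\rho_0,\xi}$ are literally the same subset of $\IP(S_\xi)$, one has $\tilde\eta(\HH^{2,0}(\Fano{Y}))=\eta(\HH^{3,1}(Y))$.

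Next I would compare the groups acting on $\IB^{10}$, using that $S$ and $S(-1)$ share the same underlying module, hence the same isometry group, and that $\rho$ restricts to $\rho_0$ on it. One inclusion is immediate: restriction to $S(-1)$ exhibits $\Gamma_{\langle 6\rangle}^{\rho,\xi}$ as a subgroup of $\Gamma_S^{\rho_0,\xi}$. For the reverse, given $\gamma_0\in\Gamma_S^{\rho_0,\xi}$, Lemma~\ref{lem1} shows it acts on $D_S\cong\IZ/3\IZ$ by $\pm\id$; replacing $\gamma_0$ by $-\gamma_0$ if needed — which does not change the induced transformation of $\IP(S_\xi)$, since $-\id\in\mu_6$ — we may assume it acts trivially on $D_{S(-1)}$. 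Then $\gamma_0\oplus\id_{j(\langle 6\rangle)}$ acts trivially on $D_{S(-1)}\oplus D_{\langle 6\rangle}$, so it extends to an isometry $\gamma$ of $L$ by Lemma~\ref{lem2}; moreover $\gamma_0$, commuting with $\rho_0$, is $\IC$-linear on the eigenspace $S_\xi$, hence lies in the connected group $\mathrm{U}(10,1)$, hence in the identity component of the real orthogonal group of $S(-1)$, so that $\gamma\in\Orth^+(L)=\Mon^2(L)$ (Markman's description of $\Mon^2$ for the $K3^{[2]}$ deformation type, where $D_L\cong\IZ/2\IZ$, see~\cite{Markman_Torelli}). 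Thus $\gamma$ witnesses $\gamma_0\in\Gamma_{\langle 6\rangle}^{\rho,\xi}$, and the two groups have the same image in $\Aut(\IB^{10})$. Consequently $\Psi$ descends to a map $\psi\colon\cC^\smooth_3\to\BCSmod$, and since $\cP_S^{\rho_0,\xi}$ and $\cP_{\langle 6\rangle}^{\rho,\xi}$ are open embeddings (by~\cite{ACT} and~\cite{BCS_ball}) and $\psi$ is compatible with them through this identification, $\psi$ is an open embedding onto a Zariski-open subvariety of $\BCSmod$; in particular $\cC^\smooth_3$ is isomorphic to the quotient of $\IB^{10}$ by the common arithmetic group minus the hyperplane arrangement of~\cite{ACT}.

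It then remains to prove that $\psi$ is surjective, which through the period maps amounts to the equality of the two removed hyperplane arrangements in the common ball quotient: the one complementary to the image of $\cP_S^{\rho_0,\xi}$, cut out by the nodal and chordal degenerations of cubics (Section~\ref{s_degen}), and the one complementary to the image of $\cP_{\langle 6\rangle}^{\rho,\xi}$, cut out by the wall divisors of the hyperk\"ahler moduli space~\cite{BCS_ball}. One inclusion again comes from $\psi$ being well defined: the period of a smooth cubic threefold lies in the image of $\cP_{\langle 6\rangle}^{\rho,\xi}$ because its Fano variety of lines is a genuine element of $\BCSmodfr$. For the converse I would match the arrangements hyperplane by hyperplane: an ACT hyperplane is $r^\perp\cap\IB^{10}$ for a nodal or chordal vector $r\in S$, that is one with $H_S(r,r)=3$, equivalently $\langle r,r\rangle_S=2$ (Section~\ref{ss_hermitian}); under $S\cong S(-1)$ these are precisely the $(-2)$-classes of $S(-1)$, and the classification of wall divisors on manifolds of $K3^{[2]}$-type together with the explicit description of the BCS arrangement in~\cite{BCS_ball} shows that these are exactly the classes producing the BCS walls, the nodal/chordal dichotomy of~\cite{ACT} corresponding to a dichotomy of wall types. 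Granting this, the two arrangements coincide, $\psi$ becomes a bijective open embedding between connected varieties, hence an isomorphism $\cC^\smooth_3\cong\BCSmod$. I expect this last step — reconciling the root-theoretic discriminant of~\cite{ACT} with the wall-divisor combinatorics on the hyperk\"ahler side — to be the main obstacle; it is precisely the computation made explicit by the degeneration analysis of Section~\ref{s_degen}, which in turn yields Theorem~\ref{th_main2}.
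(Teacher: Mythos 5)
Your overall strategy is the paper's: identify the two period domains, the two arithmetic groups, and the two removed hyperplane arrangements, then conclude via the two open-embedding period maps. The group comparison is handled essentially as in the paper (Lemma~\ref{lem1}, the sign trick $\gamma_0\mapsto-\gamma_0$, and Lemma~\ref{lem2}); your argument that the extension lies in $\Mon^2(L)$ --- connectedness of $\mathrm{U}(1,10)$ forcing orientation-preservation --- is a legitimate variant of the paper's, which instead invokes Remark~\ref{rem_ample} to say that an isometry fixing the polarisation fixes an ample class and hence the positive cone. Both routes yield $\Gamma_{\langle 6\rangle}^{\rho,\xi}\cong\Gamma_S^{\rho_0,\xi}/\langle\pm\id_S\rangle$.

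The genuine gap is in the last step, which you explicitly defer (``Granting this\dots''): the identification of the two hyperplane arrangements. It is not enough to observe that the ACT roots become the $(-2)$-classes of $S(-1)$; you must also rule out \emph{extra} walls on the hyperk\"ahler side. By~\cite{BHT,Mongardi} the MBM classes in the $K3^{[2]}$ deformation type are those of square $-2$, together with those of square $-10$ and divisibility $2$. The point you are missing is that the second type cannot occur inside $S(-1)$: a primitive class of divisibility $2$ would produce an element of order $2$ in $D_{S(-1)}\cong\IZ/3\IZ$, which is impossible. This one-line lattice computation is what closes the argument; it has nothing to do with the degeneration analysis of Section~\ref{s_degen}, to which you point as the expected resolution. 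Relatedly, your remark that the nodal/chordal dichotomy of~\cite{ACT} ``corresponds to a dichotomy of wall types'' is off: both nodal and chordal roots are square $-2$ MBM classes and give walls of the same type, and the dichotomy only becomes relevant later, when distinguishing the two degeneration families.
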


\begin{proof}
The period domains $\Omega_S^{\rho_0,\xi}$ and $\Omega_{\langle 6\rangle}^{\rho,\xi}$ are clearly identical, we identify them with the complex ball $\IB^{10}$. 
By~\cite[Theorem~6.1]{ACT} the image of the period map $\cP_C$  is the complementary of the quotient of the union of the hyperplanes $\delta^\perp\cap\IB^{10}$ for all  \emph{roots} $\delta$ of $S$, \ie all elements of norm $\langle \delta,\delta\rangle_S=2$ (in~\cite{ACT} these have norm $3$ for the naturally associated $\cE$-valued Hermitian form on $S$, see Section~\ref{ss_hermitian}). 
By~\cite[Theorem~4.5, Theorem~5.6, Section~7.1]{BCS_ball}, the image of the period map $\cP_{\langle 6\rangle}^{\rho,\xi}$  is the complementary of the quotient of the union of the hyperplanes $\delta^\perp\cap\IB^{10}$ for all $\delta\in S(-1)$ which are \emph{monodromy birationally minimal (MBM)}, see~\cite{AV}. By~\cite{BHT,Mongardi} those are exactly the classes either of norm $-2$ or of norm $-10$ and divisibility $2$ (\ie $\langle S,\delta\rangle_S=2\IZ$). The latter cannot happen since $S(-1)$ has discriminant $3$. This shows that both hyperplane arrangements are identical.

By a result of Markman~\cite[Theorem~1.2]{Markman_constraints},\cite[\S 9]{Markman_Torelli} $\Mon^2(\Fano Y)$ is the subgroup of $\Orth(\HH^2(\Fano Y,\IZ))$ of isometries leaving globally invariant the positive cone of $\Fano Y$.
By Remark~\ref{rem_ample}, isometries acting trivially on the polarization $\iota(\langle 6\rangle)$ do automatically fix an ample class, so they leave globally invariant the positive cone. The elements of $\Gamma_{\langle 6\rangle}^{\rho,\xi}$ are thus the restrictions to $S(-1)$ of those isometries of $L$ acting trivially on $j(\langle 6\rangle)$ and commuting with the representation $\rho$.  By Lemma~\ref{lem2}, any isometry of $S(-1)$ commuting with $\rho_0$ extends to an isometry of $L$ acting by $\pm 1$ on $j(\langle 6\rangle)$ and commuting with $\rho$.  So we have an isomorphism $\Gamma_{\langle 6\rangle}^{\rho,\xi}\cong \frac{\Gamma_S^{\rho_0,\xi}}{\langle\pm \id_S\rangle}$ and the ball quotients are equal.
\end{proof}

Concretely, we have a natural map $\rF\colon \cC^\smooth_3\to\BCSmod$ which maps the equivalence class of a framed cubic $(C,\eta)$ to the equivalence class of the $(\rho,j)$-polarized irreducible holomorphic manifold $(\Fano Y,\tilde\eta,\sigma,\iota)$ constructed above, and the theorem says that this map is an isomorphism. We thus have a commutative diagram
$$
\xymatrix{\cC^\smooth_3\ar[rr]^F\ar[dr]_{\cP_S^{\rho_0,\xi}}^\sim && \BCSmod\ar[dl]_\sim^{\cP_{\langle 6\rangle}^{\rho,\xi}}\\
& \displaystyle\frac{\IB^{10}\setminus\calH}{\IP\Gamma}}
$$
where $\Gamma\coloneqq \Gamma_{\langle 6\rangle}^{\rho,\xi}\cong \frac{\Gamma_S^{\rho_0,\xi}}{\langle\pm \id_S\rangle}$ and $\calH=\calH_\nodal\cup \calH_\chordal$ is the hyperplane arrangement discussed in the proof of Theorem~\ref{th_isom_mod_spaces}, where we denote
\begin{align*}
\calH_\nodal&\coloneqq \bigcup\limits_{\substack{\delta \text{ nodal root}\\\text{of } S}} \delta^\perp\cap \Omega_S^{\rho_0,\xi},\\
\calH_\chordal&\coloneqq \bigcup\limits_{\substack{\delta \text{ chordal root}\\\text{of } S}} \delta^\perp\cap \Omega_S^{\rho_0,\xi}
\end{align*}
the unions of hyperplanes orthogonal respectively to nodal and chordal roots.

\begin{corollary}
An irreducible holomorphic symplectic manifold $X$ deformation equivalent to the Hilbert square of a K3 surface, polarized by an ample class $\vartheta$ of square $6$, admits a non-symplectic automorphism of order three with invariant lattice $\IZ\vartheta\cong\langle 6\rangle$ if and only if $X$ is isomorphic to the Fano variety of lines of the ramified cyclic triple covering $Y$ of~$\proj 4$ branched along a smooth cubic threefold.
In this case, the automorphism $\sigma$ is induced by the covering automorphism.
\end{corollary}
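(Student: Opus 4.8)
The plan is to deduce this corollary directly from Theorem~\ref{th_isom_mod_spaces}, so the main work is in unwinding the definitions and matching the two sets of objects. First I would note that, by Remark~\ref{rem_ample}, the hypothesis ``$X$ is polarized by an ample class $\vartheta$ of square $6$ and admits a non-symplectic automorphism $\sigma$ of order three with $\IZ\vartheta\cong\HH^2(X,\IZ)^\sigma$'' is precisely what is needed to produce a $(\rho,j)$-polarisation: one chooses a marking $\eta$, and after post-composing with an isometry of $L$ one may assume $\eta\circ\iota=j$ (using the uniqueness up to $\Orth(L)$ of the primitive embedding $\langle 6\rangle\hookrightarrow L$, as in Section~\ref{ss_modnonsympl}). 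One must check that with such a marking the induced representation $\tilde\eta\circ\sigma^\ast\circ\tilde\eta^{-1}$ is conjugate to the fixed $\rho$; this follows because the $\xi$-eigenspace condition $\sigma_{|\HH^{2,0}(X)}=\xi\,\id$ pins down which eigenvalue acts on the period, and on $\HH^2_\orth(X,\IZ)\cong S(-1)$ the order-three isometry with invariant part zero and with prescribed action on the discriminant is unique up to conjugacy (here one uses Lemma~\ref{lem1} together with the classification in~\cite[Example~6.4]{BCS_class}, exactly as invoked earlier). Hence $(X,\eta,\sigma,\iota)$ defines a class in $\BCSmodfr$, and forgetting the marking gives a class in $\BCSmod$.

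Next, Theorem~\ref{th_isom_mod_spaces} (equivalently the map $\rF$ of the diagram above) identifies this class with the class of a framed smooth cubic threefold $(C,\eta')$, whose image under $\rF$ is the class of $(\Fano Y,\tilde\eta',\sigma_Y,\iota')$ with $Y\to\proj 4$ the ramified cyclic triple covering branched along $C$ and $\sigma_Y$ the covering automorphism. Since $\rF$ is an isomorphism of moduli spaces, the two $(\rho,j)$-polarized tuples are equivalent: there is a biregular isomorphism $f\colon X\to\Fano Y$ with $\eta=f^\ast\circ\tilde\eta'$, $\sigma=f^{-1}\circ\sigma_Y\circ f$ and $\iota=f^\ast\circ\iota'$. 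In particular $X\cong\Fano Y$ and the automorphism $\sigma$ is carried by $f$ to the automorphism of $\Fano Y$ induced by the covering automorphism of $Y$, which is exactly the assertion ``$\sigma$ is induced by the covering automorphism''. The converse implication is Section~\ref{ss_fano}: for $X=\Fano Y$ with $Y$ the cyclic cubic fourfold over a smooth cubic threefold, the covering automorphism $\sigma$ of $Y$ induces a non-symplectic automorphism of order three on $\Fano Y$ whose invariant lattice was computed there to be $\langle 6\rangle$, generated by the primitive ample class $\theta_{\Fano Y}$ of square $6$ in the Pl\"ucker polarisation.

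The step I expect to be the main obstacle is the bookkeeping in the first paragraph: verifying that an arbitrary choice of marking on an abstract $(X,\sigma,\iota)$ can be adjusted so that \emph{simultaneously} $\eta\circ\iota=j$ and $\eta\circ\sigma^\ast\circ\eta^{-1}=\rho$, i.e.\ that the pair $(\sigma^\ast,\iota)$ realizes the \emph{same} conjugacy class that was fixed once and for all in the construction of $\BCSmodfr$. This is where one genuinely needs that the primitive embedding $S(-1)\hookrightarrow L$ is unique up to $\Orth(L)$, that the induced order-three isometry on $S(-1)$ is unique up to $\Gamma_S^{\rho_0,\xi}$-conjugacy, and — to handle the $\pm1$ ambiguity on the discriminant $D_L\cong\IZ/3\IZ$ — Lemma~\ref{lem2}, exactly as in the proof of Theorem~\ref{th_isom_mod_spaces}. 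Once this identification is in place, the rest is a formal consequence of the isomorphism $\rF$ and the geometric description of its inverse.
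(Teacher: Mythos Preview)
Your proposal is correct and follows exactly the approach the paper intends: the corollary is stated there without proof, as an immediate consequence of the isomorphism $\rF\colon\cC^\smooth_3\to\BCSmod$ described in the paragraph just before it, and your argument is the natural unpacking of that statement together with the content of Section~\ref{ss_fano} for the converse direction. One small correction that does not affect your conclusions: equality of classes in $\BCSmod=\leftquo{\Gamma_{\langle 6\rangle}^{\rho,\xi}}{\BCSmodfr}$ only yields $\gamma\circ\eta=f^\ast\circ\tilde\eta'$ for some $\gamma\in\Gamma_{\langle 6\rangle}^{\rho,\xi}$, not $\eta=f^\ast\circ\tilde\eta'$ as you wrote; the identities $X\cong\Fano Y$ and $\sigma=f^{-1}\circ\sigma_Y\circ f$ survive unchanged.
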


\begin{remark}
Take $(X,\eta,\sigma,\iota)\in\BCSmodfr$. A class $d\in\HH^2(X,\IZ)$ is algebraic if and only if it is orthogonal to $\HH^{2,0}(X)$, or equivalently if the period $\eta(\HH^{2,0}(X))$ is outside the hyperplane orthogonal to $d$ in the period domain $\Omega_{\langle 6\rangle}^{\rho,\xi}$. Since such hyperplanes form a countable union, their complementary is dense in the period domain, so generically the N\'eron--Severi group of $X$ is generated by $\iota(\langle 6\rangle)$. The moduli space $\BCSmod$ is thus a $10$-dimensional moduli space of irreducible holomorphic symplectic manifolds deformation equivalent to the Hilbert square of a K3 surface, admitting a non symplectic automorphism of order three, and whose general Picard number is one. This answers a question asked to the authors by Olivier Debarre~\cite{DM}.
\end{remark}

\subsection{Relation with the moduli space of genus five principally polarized abelian varieties}

The term ``occult'' in the section title is a reference to a paper of Kudla--Rapoport~\cite{KudlaRapoport}. As suggested to us by Michael Rapoport and thanks to enlightening discussions with Gregory Sankaran, we can draw direct paths between all the involved period maps, producing in particular a period map from~$\BCSmod$ to a moduli space of principally polarized abelian varieties.
The classical period map for cubic threefolds of Clemens--Griffiths~\cite{CG} maps a cubic $C\in\cC^\smooth_3$ to its intermediate Jacobian $\IntJac(C)\coloneqq \frac{\HH^{1,2}(C)}{\HH^3(C,\IZ)}$, which gives a point in the $15$-dimensional moduli space $\cA_5$ of principally polarized abelian variety of genus $5$. 

\begin{corollary}
There exists an embedding $\BCSmod\hookrightarrow\cA_5$ mapping a $(\rho,j)$-polarized tuple $(X,\eta,\sigma,\iota)$ to the Albanese variety $\Alb(X^\sigma)$ of the fixed locus $X^\sigma\subset X$ of the automorphism $\sigma$.
\end{corollary}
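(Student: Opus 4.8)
The plan is to recognize the proposed assignment as the classical Clemens--Griffiths period map $C\mapsto\IntJac(C)$, transported to $\BCSmod$ along the isomorphism of Theorem~\ref{th_isom_mod_spaces}. Recall that the natural map $\rF\colon\cC^\smooth_3\to\BCSmod$ sends (the class of) a framed cubic $(C,\eta)$ to (the class of) the tuple $(\Fano Y,\tilde\eta,\sigma,\iota)$, where $Y\to\proj 4$ is the cyclic triple cover branched along $C$ and $\sigma$ on $\Fano Y$ is induced by the covering automorphism, and that Theorem~\ref{th_isom_mod_spaces} asserts $\rF$ is an isomorphism. So I would start from $(X,\eta,\sigma,\iota)\in\BCSmod$, let $C$ be a cubic threefold representing $\rF^{-1}(X,\eta,\sigma,\iota)\in\cC^\smooth_3$, and use a biregular isomorphism $f\colon X\to\Fano Y$ conjugating $\sigma$ to the covering automorphism. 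Restricting $f$ to fixed loci and invoking the identification $(\Fano Y)^\sigma\cong\Fano C$ recalled in Section~\ref{ss_fano}, I get $X^\sigma\cong\Fano C$, hence $\Alb(X^\sigma)\cong\Alb(\Fano C)$.

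The heart of the matter is the theorem of Clemens--Griffiths~\cite{CG}: the Albanese variety of the Fano surface $\Fano C$, with its natural principal polarization (the one induced by the incidence divisor on $\Fano C\times\Fano C$), is isomorphic as a principally polarized abelian variety to the intermediate Jacobian $\IntJac(C)$; moreover $h^{1,0}(\Fano C)=5$, so this is a point of $\cA_5$. Using this I would conclude $\Alb(X^\sigma)\cong\Alb(\Fano C)\cong\IntJac(C)=\IntJac\bigl(\rF^{-1}(X,\eta,\sigma,\iota)\bigr)$ as objects of $\cA_5$, so that the proposed map is exactly $\IntJac\circ\rF^{-1}$. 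Well-definedness on equivalence classes is then automatic (it was already so for $\IntJac$; directly, an equivalence $f\colon X\to X'$ restricts to an isomorphism $X^\sigma\cong (X')^{\sigma'}$ respecting the incidence polarizations).

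Finally I would check that $\IntJac\circ\rF^{-1}$ is an embedding. Since $\rF^{-1}$ is an isomorphism (both spaces being identified with $(\IB^{10}\setminus\calH)/\IP\Gamma$), it is enough that $\IntJac\colon\cC^\smooth_3\to\cA_5$ is a locally closed immersion. It is a morphism, being induced by the family of intermediate Jacobians of smooth cubic threefolds; it is injective by the global Torelli theorem for cubic threefolds~\cite{CG}; and it is an immersion because infinitesimal Torelli holds for cubic threefolds. Composing yields the embedding $\BCSmod\hookrightarrow\cA_5$; the dimension count $\dim\BCSmod=10<15=\dim\cA_5$ is consistent with this being a proper inclusion.

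I do not expect a real obstacle: the statement is a repackaging of two classical theorems of Clemens--Griffiths (the identification $\Alb(\Fano C)\cong\IntJac(C)$ and the Torelli theorem) through the isomorphism $\rF$. The only point deserving care is the bookkeeping of polarizations, namely exhibiting the principal polarization on $\Alb(X^\sigma)$ intrinsically via the fixed surface and matching it with the one on $\IntJac(C)$; this is precisely what Clemens--Griffiths supply, so the work lies in quoting their results in the exact form needed, not in proving anything new.
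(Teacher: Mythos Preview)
Your proposal is correct and follows essentially the same approach as the paper: identify the assignment with $\IntJac\circ\rF^{-1}$ via the Clemens--Griffiths isomorphism $\Alb(\Fano C)\cong\IntJac(C)$ and the identification $X^\sigma\cong\Fano C$, then use that $\rF$ is an isomorphism. Your argument is in fact slightly more explicit than the paper's, since you spell out why $\IntJac$ is an embedding (global and infinitesimal Torelli for cubic threefolds), whereas the paper leaves this implicit.
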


\begin{proof}
By~\cite[(0.8)]{CG} the intermediate Jacobian of $C$ is isomorphic to the Albanese variety of the Fano variety of lines on $C$, \ie $\IntJac(C)\cong\Alb(\Fano C)$. As observed in Section~\ref{ss_fano}, the variety $\Fano C$ is the fixed locus in $\Fano Y$ of the automorphism $\sigma$ of the ramified covering map $Y\to\proj 4$ branched along $C$. Since the morphism ${\rF\colon\cC^\smooth_3\to\BCSmod}$ mapping $C$ to $(F(V),\sigma)$ (with the appropriate markings and polarizations as above) is an isomorphism, we get the result.
\end{proof}

%%%%%%%%%%%%%%%%%%%%%%%%%%%%%%%%%%%%%%%%%%%%%
%%%%%%%%%%%%%%%%%%%%%%%%%%%%%%%%%%%%%%%%%%%%%

\section{Degenerations}\label{s_degen}

Following an idea used by Dolgachev--Kond\=o~\cite[\S11]{DK} to study the degeneration of K3 surfaces with a non-symplectic automorphism of prime order,
we study the degenerations of those irreducible holomorphic symplectic manifolds $(X,\eta,\sigma,\iota)$ parametrized by $\BCSmodfr$, inside the moduli space $\cM_{\langle 6\rangle}$ of $j$-polarized marked triples.
Denote by $\Omega_{\langle 6\rangle}^\circ$ the connected component of $\{\omega\in\IP(S_\IC)\,|\,h_{S(-1)}(\omega,\omega)>0\}$ which contains the ball $\Omega_{\langle 6\rangle}^{\rho,\xi}$, it is easy to deduce from the theorem of surjectivity of the period map of Huybrechts~\cite[Theorem~8.1]{Huybrechts_Invent} that the period map
$\cP_{\langle 6\rangle}\colon \cM_{\langle 6\rangle}\to\Omega_{\langle 6\rangle}^\circ$
is surjective. Note that, comparing with the notation in Section~\ref{ss_compare}, we have $\Omega_{\langle 6\rangle}^\circ=\Omega_S^\circ$ but we use both notations depending on the context. We thus have a commutative diagram
$$
\xymatrix{
\BCSmodfr \ar@{^(->}[d]\ar[r]^{\cP_{\langle 6\rangle}^{\rho,\xi}}& \Omega_{\langle 6\rangle}^{\rho,\xi}\setminus\calH\ar@{^(->}[d]\\
\cM_{\langle 6\rangle}\ar@{->>}[r]^{\cP_{\langle 6 \rangle}} & \Omega_{\langle 6 \rangle}^\circ}
$$
Let $\omega\in\calH$ and $(X,\eta,\iota)\in\cP_{\langle 6\rangle}^{-1}(\omega)$. By~\cite[Theorem~1.2]{Markman_Torelli}, any two points in this fiber of $\cP_{\langle 6\rangle}$ correspond to birational manifolds.

We first observe that the isometry $\rho\in\Orth(L)$ is not 
represented by any automorphism of $X$. As explained in the proof of Theorem~\ref{th_isom_mod_spaces}, this is a special case of~\cite[Theorem~4.0.8]{BCS_class}:
 if $\omega\in\calH$, then $\omega$ is orthogonal to some class $\delta\in S$ such that $\eta^{-1}(\delta)$ is a MBM class of $X$. 
If $\rho$ was represented by an automorphism of $X$, this one would be automatically non-symplectic since~$\omega$ is not in the invariant sublattice of~$\rho$, so as explained in Remark~\ref{rem_ample} the line~$\iota(\langle 6\rangle)$ contains an ample class~$\vartheta$. But  since~$\delta\in S$, the divisor~$\eta^{-1}(\delta)$ is orthogonal to~$\vartheta$.
As explained in ~\cite[Remark~4.0.7 (2)]{BCS_class}, this is not possible since MBM classes have nonzero intersection with the K\"ahler cone of~$X$.
In particular, in this situation the line~$\iota(\langle 6\rangle)$ contains no ample class any more.

Following~\cite[Section~1]{Markman_Torelli}, we denote by $\Mon^2_\Hdg(X)$ the subgroup of $\Mon^2(X)$ consisting of those monodromies whose $\IC$-linear extension preserves the Hodge decomposition.
For any nonisotropic vector $v\in L$, we denote by $r_v\coloneqq x\mapsto x-2\frac{\langle x,v\rangle}{\langle v,v\rangle} v$ the reflection which leaves $v^\perp$ invariant. 
We consider the normal subgroup $W_\Exc(X)$ of $\Mon^2_\Hdg(X)$ generated by the reflections by prime exceptional divisors of $X$ (\ie  reduced and irreducible effective divisors of negative Beauville--Bogolomov--Fujiki degree).

\begin{proposition}\label{prop_birational_map}
Let $\omega\in\calH$ and $(X,\eta,\iota)\in\cP_{\langle 6\rangle}^{-1}(\omega)$. There exists a unique birational map $\beta\in\Bir(X)$ and a unique element
 $w\in W_\Exc(X)$ such that:
$$
\eta^{-1}\circ\rho\circ\eta=w\circ\beta^\ast.
$$
\end{proposition}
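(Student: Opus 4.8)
The plan is to decompose the Hodge isometry $\eta^{-1}\circ\rho\circ\eta$ of $\HH^2(X,\IZ)$ into a monodromy part coming from Weyl-group reflections and a part induced by a birational map, using Markman's description of the monodromy group together with the Torelli theorem. First I would observe that since $\rho$ commutes with the weight-two Hodge structure on $S(-1)_\IC$ carried by $\omega=\eta(\HH^{2,0}(X))$ (indeed $\rho$ is $\xi\cdot\id$ on the $\xi$-eigenspace and $\bar\xi\cdot\id$ on the conjugate eigenspace, and acts trivially on $j(\langle 6\rangle)\subset\NS(X)$), the isometry $g\coloneqq\eta^{-1}\circ\rho\circ\eta$ lies in $\Mon^2_\Hdg(X)$: it is a Hodge isometry, and it is a monodromy operator because, by the very construction in Section~\ref{ss_modnonsympl}, $\rho$ arises as $\tilde\eta\circ\sigma^\ast\circ\tilde\eta^{-1}$ for the actual automorphism $\sigma$ on the reference fourfold $\Fano Y$, hence lies in $\Mon^2(L)=\tilde\eta\circ\Mon^2(\Fano Y)\circ\tilde\eta^{-1}$, and the monodromy group is a deformation invariant.

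Next I would invoke the structure theory of $\Mon^2_\Hdg(X)$ for IHS manifolds of $K3^{[2]}$-type. By Markman (and its refinement in~\cite[Section~1]{Markman_Torelli}), $W_\Exc(X)$ is a normal subgroup of $\Mon^2_\Hdg(X)$ and the quotient $\Mon^2_\Hdg(X)/W_\Exc(X)$ acts simply transitively on the set of connected components of the positive cone minus the prime-exceptional walls — equivalently, the Hodge isometries inducing, via a suitable birational model, a Hodge isometry that preserves the Kähler cone are exactly those of the form $w\circ\beta^\ast$ with $w\in W_\Exc(X)$ and $\beta\in\Bir(X)$; this is the content of the Hodge-theoretic Torelli theorem for birational self-maps. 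So it suffices to write $g=w\circ h$ with $w\in W_\Exc(X)$ and $h\in\Mon^2_\Hdg(X)$ mapping the birational Kähler cone to itself, i.e.\ sending one chamber of the positive cone to the fundamental chamber. Since $W_\Exc(X)$ is generated by reflections $r_v$ in prime exceptional classes $v$, and these reflections are a Coxeter-type system acting on the positive cone with the fundamental exceptional chamber as a fundamental domain, there is a unique $w\in W_\Exc(X)$ such that $w^{-1}\circ g$ sends the chosen chamber back into the fundamental one; set $h\coloneqq w^{-1}\circ g$. Then $h$ is a Hodge isometry preserving the exceptional chamber structure, hence by Torelli it is induced by a unique birational map $\beta\in\Bir(X)$, giving $g=w\circ\beta^\ast$ as required. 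Uniqueness of the pair $(\beta,w)$ follows because $W_\Exc(X)$ acts freely on the set of chambers (so $w$ is determined), and then $\beta^\ast=w^{-1}\circ g$ is determined, and a birational map of an IHS manifold is determined by its action on $\HH^2$ (the automorphism and the birational-map group inject into $\Orth(\HH^2(X,\IZ))$).

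The main obstacle I anticipate is the bookkeeping in the second paragraph: one must be careful that $g$ genuinely lies in $\Mon^2_\Hdg(X)$ for \emph{every} $(X,\eta,\iota)$ in the fiber $\cP_{\langle 6\rangle}^{-1}(\omega)$, not merely for the distinguished Fano variety of lines — this requires that $\Mon^2(L)$ is defined independently of the representative (already noted in Section~\ref{ss_modnonsympl}) and that $\rho$ is a Hodge isometry for the period $\omega$, which rests precisely on the computation in Section~\ref{ss_fano} that $\omega\in S(-1)_\xi$ is an eigenvector of $\rho_0$ while $\rho$ is trivial on $j(\langle 6\rangle)$. Apart from that, one should double-check that the chamber decomposition of the positive cone by prime-exceptional walls is locally finite so that the minimal-length element $w$ carrying one chamber to the fundamental chamber exists and is unique; this is standard for $K3^{[2]}$-type by~\cite{BHT,Mongardi}, where the MBM/exceptional classes are the $(-2)$-classes and the $(-10)$-classes of divisibility $2$, the latter being excluded here since $\NS(X)\supset\iota(\langle 6\rangle)$ has discriminant $3$ — exactly as in the proof of Theorem~\ref{th_isom_mod_spaces}.
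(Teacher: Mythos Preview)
Your argument is correct and follows the same route as the paper: show that $\psi=\eta^{-1}\circ\rho\circ\eta\in\Mon^2_\Hdg(X)$, then invoke Markman's decomposition $\Mon^2_\Hdg(X)=W_\Exc(X)\rtimes\Bir(X)^\ast$ together with the injectivity of $\Bir(X)\to\Orth(\HH^2(X,\IZ))$ for the $K3^{[2]}$-type. The paper simply cites \cite[Theorem~1.6]{Markman_Torelli} as a black box for the second step, whereas you unpack its content via the chamber decomposition; this is the same proof, just at different levels of detail. One small slip in your final aside: it is $S(-1)$, not $\NS(X)$, that has discriminant~$3$ and excludes the $(-10)$-classes of divisibility~$2$ (indeed $\NS(X)$ can be strictly larger than $\iota(\langle 6\rangle)$ here), but this remark is not needed for the proposition anyway since local finiteness of the exceptional wall-and-chamber structure is a general fact.
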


\begin{proof}
Since $\rho\in\Mon^2(L)$, the isometry $\psi\coloneqq \eta^{-1}\circ\rho\circ\eta$ is a monodromy operator of $X$. Since $\omega\in\IP(S(-1)_\xi)$, we have $\psi(
\HH^{2,0}(X))=\HH^{2,0}(X)$ so $\psi\in\Mon^2_\Hdg(X)$.
By~\cite[Theorem~1.6]{Markman_Torelli}, there exists a unique element  $w\in W_\Exc(X)$ and a birational map $\beta\in\Bir(X)$ such that $
\eta^{-1}\circ\rho\circ\eta=w\circ\beta^\ast$. The map $\beta$ is unique in this case since the natural map $\Aut(X)\to\Orth(\HH^2(X,\IZ))$ 
is injective in this deformation class
(see~\cite[Lemma~1.2]{Mongardi_CRAS}).
\end{proof}

This results means that the isometry $\rho$, after composition by a suitable product of reflections, is realized by a birational transformation of $X$. 
By a deeper analysis of the degeneracy situations, we will show below that the product of reflections $w$ is not trivial and that $\rho$ itself is in fact realized by another automorphism and not only by a birational map. This means that, when the period point $\omega\in\Omega_{\langle 6\rangle}^{\rho,\xi}\setminus\calH$ goes to a point of $\calH$, the non-symplectic automorphism of the family does not degenerate to a birational non-biregular transformation. What happens instead is that the automorphism degenerates by jumping to another one with a bigger invariant sublattice on a different family. We will show this by describing explicitly 
the new isometry to be considered and by proving that, for a suitable choice of $X$ in the fiber, the birational transformation $\beta$ extends to an automorphism of $X$.

This phenomenon is very common in the context of degenerations of automorphisms of K3 surfaces. Take for instance an ample $\langle 2\rangle$-polarized K3 surface $\Sigma$. It is well-known that $\Sigma$ carries a non-symplectic involution $\iota$ with invariant lattice  
isometric to $\langle 2\rangle$ and such that $\Sigma/\iota\cong\proj 2$. This exhibits $\Sigma$ as a ramified double cover of $\proj 2$ branched
along a smooth sextic curve. If the sextic acquires ADE singularities then the double cover is singular and its minimal resolution $\tilde \Sigma$ is a K3 surface
containing several $(-2)$-curves coming from the desingularisation of the singular points.  In this case the K3 surface $\tilde \Sigma$ carries again a non-symplectic involution but its invariant lattice has higher rank.
A concrete example is when the sextic has a node: generically, the surface~$\tilde \Sigma$ has a N\'eron--Severi lattice isometric to $\langle 2\rangle\oplus \langle -2\rangle$, which is also the invariant lattice of the non-symplectic involution (see~\cite{AN}). What happens is that as soon as the sextic admits singular points, the K3 surface $\tilde \Sigma$ contains rational curves which do not meet the class of square $2$ (the pullback of the class of a line of~$\IP^2$ is not an ample polarization any more, it is only big and nef). Then $\tilde \Sigma$ admits a non-symplectic involution $\tilde{\iota}$ which is of ``different kind'' than $\iota$, since their invariant lattices are different, so heuristically the automorphism survives by jumping to a different family.

\subsection{Degeneracy lattices}

For any $\omega\in\calH$ and $(X,\eta,\iota)\in\cP_{\langle 6\rangle}^{-1}(\omega)$, the \emph{degeneracy lattice} of $(X,\eta,\iota)$ is the sublattice 
of $S(-1)$ generated by MBM classes of $S(-1)$ which are orthogonal to $\omega$ (see the proof of Theorem~\ref{th_isom_mod_spaces}). Here this lattice is simply 
the lattice generated by the roots $\delta$ of $S(-1)$, with $\delta^2=-2$, such that $\omega\in\delta^\perp$. This lattice is clearly globally $\rho$-invariant 
and orthogonal to $j(\langle 6\rangle)$. Generically, $\omega$~belongs to only one hyperplane $\delta^\perp\cap\IP(S(-1)_\xi)$ so the general degeneracy 
lattice is $R_\delta\coloneqq\Span(\delta,\rho(\delta))$. 

It is easy to compute that $R_\delta$ is isometric to the lattice $A_2(-1)$ and that $\rho$~acts trivially on the discriminant
 group~$D_{R_\delta}$. The lattice $R_\delta$ is primitive in~$S(-1)$. Indeed, using the $\cE$-lattice structure (see Section~\ref{ss_hermitian}) of $S(-1)$ we have an isomorphism of $\cE$-modules 
$\frac{S(-1)}{R_\delta}\cong\frac{\cE^{\oplus 11}}{\cE\delta}$. 
Since $\delta$ is a root, with $H_S(\delta)=3$, it is easy to check that the rank one $\cE$-submodule $\cE\delta$ is primitive in $\cE^{\oplus 11}$, so $R_\delta$ is primitive in $S(-1)$. 
Using a result of Nikulin~\cite[Proposition~1.15.1]{Nikulin} we find that two degeneration situations appear, depending on the two possible isometry classes of the orthogonal complement of $R_\delta$ in $S(-1)$: 
$$
R_\delta^\perp\cong U^{\oplus 2}\oplus E_8(-1)^{\oplus 2}
\text{ or } 
R_\delta^\perp\cong U\oplus U(3)\oplus E_8(-1)^{\oplus 2}.
$$
Recall that following~\cite{ACT} a root $\delta$ is called \emph{nodal} if $H_S(S,\delta)=\theta\cE$ and \emph{chordal} if $H_S(S,\delta)=3\cE$. The relation with the above dichotomy is explained by the following observation.

\begin{lemma}\label{lem_chordal_unimodular}
A root $\delta$ is chordal if and only if $R_\delta^\perp$ is unimodular.
\end{lemma}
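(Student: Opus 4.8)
The plan is to translate the nodal/chordal dichotomy for a root $\delta$ into a statement about the index of $R_\delta\oplus R_\delta^\perp$ in $S(-1)$, and then use the standard relation between discriminant forms of orthogonal complements in a unimodular-modulo-torsion setting. First I would recall from Section~\ref{ss_hermitian} that for a root $\delta$ (so $H_S(\delta,\delta)=3$) the image $H_S(S,\delta)\subset\cE$ is either $\theta\cE$ (nodal) or $3\cE$ (chordal), and that $R_\delta=\Span(\delta,\rho(\delta))\cong A_2(-1)$ is primitive in $S(-1)$ with $D_{R_\delta}\cong\IZ/3\IZ$. The key observation is that the saturation of $R_\delta\oplus R_\delta^\perp$ in $S(-1)$ is controlled by the glue group, and since $D_{S(-1)}\cong\IZ/3\IZ$ and $D_{R_\delta}\cong\IZ/3\IZ$, the only two possibilities are: the glue group is trivial, forcing $D_{R_\delta}\oplus D_{R_\delta^\perp}\cong D_{S(-1)}$ up to the overlattice correction, i.e. $D_{R_\delta^\perp}$ trivial (so $R_\delta^\perp$ unimodular, $\disc R_\delta^\perp=1$); or the glue group is $\IZ/3\IZ$, in which case $R_\delta\oplus R_\delta^\perp\subset S(-1)$ has index $3$ and $D_{R_\delta^\perp}$ must have order $9$ with the diagonal $\IZ/3\IZ$ glued off, matching $R_\delta^\perp\cong U\oplus U(3)\oplus E_8(-1)^{\oplus 2}$ (which has discriminant $9$). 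These are exactly the two isometry classes already listed above via Nikulin's \cite[Proposition~1.15.1]{Nikulin}.

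The remaining and main point is to match ``glue group trivial'' with ``$\delta$ chordal''. Here I would argue $\cE$-linearly: the $\cE$-module map $H_S(-,\delta)\colon S(-1)\to\cE$ has image $\alpha_\delta\cE$, and composing with the natural projection one sees that the divisibility of $\delta$ in $S(-1)$ (equivalently, the extent to which $R_\delta$ fails to be a direct summand) is governed precisely by whether $H_S(S,\delta)=\theta\cE$ or $3\cE$. Concretely, $\delta$ being chordal means $H_S(x,\delta)\in 3\cE$ for all $x$, which translates into $\langle x,\delta\rangle_{S(-1)}$ (the $\IZ$-bilinear pairing) being divisible by $3$ for every $x\in S(-1)$ — that is, $\delta\in 3\,S(-1)^\ast\cap S(-1)$, so the class $\tfrac{1}{3}\delta$ already lies in $S(-1)^\ast$ and there is no room to glue $R_\delta^\perp$ to $R_\delta$; hence $R_\delta^\perp$ is unimodular. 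In the nodal case $H_S(S,\delta)=\theta\cE$ gives an $x$ with $\langle x,\delta\rangle_{S(-1)}\not\equiv 0\pmod 3$, which produces a nontrivial glue vector between $R_\delta$ and $R_\delta^\perp$, forcing $\disc R_\delta^\perp=9\neq 1$.

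The step I expect to be the real obstacle is making the translation ``$H_S(S,\delta)=3\cE \iff \langle S(-1),\delta\rangle\subset 3\IZ$'' fully precise, since $H_S$ packages both the symmetric pairing $\langle x,\delta\rangle_S$ and the ``twisted'' pairing $\langle x,\xi\delta\rangle_S$; one must check that the $\theta$-adic valuation of $H_S(x,\delta)$ being $\geq 2$ for all $x$ is equivalent to $3\mid\langle x,\delta\rangle_S$ for all $x$, using that $3=\theta\bar\theta$ and that $\rho$ (multiplication by $\xi$) preserves $S$. Once this dictionary is in place, the lemma follows immediately by comparing discriminants: $R_\delta^\perp$ unimodular $\iff$ glue trivial $\iff$ $\delta$ chordal. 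I would also remark that this is consistent with the two orthogonal-complement lattices displayed before the lemma, since $U\oplus U(3)\oplus E_8(-1)^{\oplus 2}$ has discriminant group $(\IZ/3\IZ)^{2}$ while $U^{\oplus 2}\oplus E_8(-1)^{\oplus 2}$ is unimodular.
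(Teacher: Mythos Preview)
Your overall architecture matches the paper's: reduce the question to whether the glue group $S(-1)/(R_\delta\oplus R_\delta^\perp)$ is trivial or $\IZ/3\IZ$, and tie this to the image $H_S(S,\delta)$. The paper does exactly this, but in the non-unimodular case it simply writes down the explicit glue vector $x=\tfrac{1}{3}(\delta-\rho(\delta)+y)$ with $y\in R_\delta^\perp$ and computes $H_S(x,\delta)$ directly, obtaining a unit multiple of $\theta$; in the unimodular case it uses $S=R_\delta\oplus R_\delta^\perp$ together with $H_S(R_\delta,\delta)=3\cE$ and $H_S(R_\delta^\perp,\delta)=0$.

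Where your sketch breaks is the translation you single out as the ``real obstacle'': you claim that $H_S(S,\delta)\subset 3\cE$ is equivalent to $3\mid\langle x,\delta\rangle$ for every $x$, i.e.\ that $\delta$ has divisibility $3$ in $S(-1)$. This cannot be right, since $\langle\delta,\delta\rangle=\pm 2$ is never divisible by~$3$; no root ever has divisibility~$3$. The correct dictionary is obtained by reducing modulo the prime $\theta$ (where $\xi\equiv 1$): from $H_S(x,\delta)=\theta\bigl(\langle x,\rho(\delta)\rangle_S-\xi\langle x,\delta\rangle_S\bigr)$ one gets
\[
H_S(x,\delta)\in 3\cE=\theta^2\cE \iff \langle x,\rho(\delta)\rangle_S\equiv\langle x,\delta\rangle_S\pmod 3 \iff 3\mid\langle x,\delta-\rho(\delta)\rangle.
\]
So chordality is governed by the divisibility of $\delta-\rho(\delta)$, not of $\delta$; and this is precisely the condition for the projection of $x$ to $R_\delta^\ast/R_\delta\cong\IZ/3\IZ$ (generated by $\tfrac{1}{3}(\delta-\rho(\delta))$) to vanish, i.e.\ for the glue to be trivial. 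Once you replace your divisibility claim by this one, your argument goes through and is essentially a repackaging of the paper's explicit computation.
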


\begin{proof}
It is easy to check that $H_S(R_\delta,\delta)=3\cE$ and $H_S(R_\delta^\perp,\delta)=0$. If $R_\delta^\perp$ is unimodular, then $R_\delta\oplus R_\delta^\perp=S$ so $H_S(S,\delta)=3\cE$ and
$\delta$ is chordal. Otherwise $\frac{S}{R_\delta\oplus R_\delta^\perp}\cong \frac{\IZ}{3\IZ}$ is generated by a class $x\coloneqq\frac{1}{3}(\delta-\rho(\delta)+y)$ with $y\in R_\delta^\perp$
and we get $H_S(x,\delta)=\theta(1-\xi)\in \theta\cE^\ast$ so $H_S(S,\delta)=\theta \cE$ and $\delta$ is nodal.
\end{proof}

\subsection{Degeneration of the representation}

Let $\delta\in S(-1)$ be a root ($\delta^2=-2$).
Recall that we have choosen primitive embeddings $j\colon \langle 6\rangle\hookrightarrow L$ and $S(-1)\subset L$ such that $j(\langle 6\rangle)^\perp=S(-1)$. 
This induces an embedding  in~$L$ of $\langle 6\rangle\oplus A_2(-1)\cong \langle 6\rangle\oplus R_\delta$, not necessarily primitive. 
We thus consider the saturation  $T_\delta$ of $\langle 6\rangle\oplus R_\delta$ in~$L$, which is the minimal primitive sublattice of~$L$ containing $j(\langle 6\rangle)\oplus R_\delta$.
Observe that the orthogonal complement $S_\delta$ of $T_\delta$ in $L$ is nothing else than the orthogonal complement $R_\delta^\perp$ of $R_\delta$ in $S(-1)$ embedded in $L$.

\begin{lemma} \label{lem_saturation} Let $\delta\in S(-1)$ be a root.
\begin{enumerate}
\item If $\delta$ is nodal, then $\langle 6\rangle\oplus R_\delta$ is primitive in $L$ and $T_\delta\cong U(3)\oplus \langle -2\rangle$.
\item If $\delta$ is chordal, then $\langle 6\rangle\oplus R_\delta$ is not primitive in $L$ and $T_\delta\cong U\oplus \langle -2\rangle$.
\end{enumerate}
\end{lemma}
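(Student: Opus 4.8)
The plan is to reduce everything to discriminant‑form bookkeeping together with Nikulin's classification of indefinite lattices. Recall (from the discussion preceding the lemma, combined with Lemma~\ref{lem_chordal_unimodular}) that the orthogonal complement $S_\delta=R_\delta^\perp\subset S(-1)$, which is also the orthogonal complement of $T_\delta$ in $L$, is isometric to $U^{\oplus 2}\oplus E_8(-1)^{\oplus 2}$ when $\delta$ is chordal and to $U\oplus U(3)\oplus E_8(-1)^{\oplus 2}$ when $\delta$ is nodal. Recall also that $T_\delta$ is by definition the saturation in $L$ of the orthogonal direct sum $j(\langle 6\rangle)\oplus R_\delta\cong\langle 6\rangle\oplus A_2(-1)$, so that the latter is a finite‑index sublattice of $T_\delta$, with discriminant group of order $6\cdot 3=18$. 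Throughout I will use the two standard identities $[N:M]^2=|D_M|/|D_N|$ for a finite‑index inclusion $M\subset N$, and the fact that the glue group of an orthogonal splitting $M\oplus M^\perp\subset L$ embeds into $D_M$ and into $D_{M^\perp}$.

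In the chordal case, $S_\delta$ is unimodular, hence an orthogonal direct summand of $L$, so $L=S_\delta\oplus T_\delta$. Therefore $T_\delta$ is an even lattice of signature $(1,2)$ whose discriminant form is $q_L$, namely the discriminant form of $\langle -2\rangle$. The lattice $U\oplus\langle -2\rangle$ has the same signature and discriminant form, and the hypotheses of Nikulin's uniqueness theorem \cite[Corollary~1.13.3]{Nikulin} are satisfied ($T_\delta$ is indefinite and $\rk T_\delta=3\geq\ell(D_{T_\delta})+2=3$); hence $T_\delta\cong U\oplus\langle -2\rangle$. Since $|D_{T_\delta}|=2\neq 18=|D_{\langle 6\rangle\oplus R_\delta}|$, the inclusion $\langle 6\rangle\oplus R_\delta\subset T_\delta$ has index $3$, so $\langle 6\rangle\oplus R_\delta$ is not primitive in $L$.

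For the nodal case the plan is to show that $\langle 6\rangle\oplus R_\delta$ is already primitive, i.e.\ equals $T_\delta$. The square of the index $[T_\delta:\langle 6\rangle\oplus R_\delta]$ equals $18/|D_{T_\delta}|$, which forces $|D_{T_\delta}|\in\{2,18\}$ and the index to be $3$ or $1$. Suppose the index were $3$; then $|D_{T_\delta}|=2$, and by the uniqueness theorem again $T_\delta\cong U\oplus\langle -2\rangle$, so $D_{T_\delta}\cong\IZ/2\IZ$. But $S_\delta$ is primitive in $L$ (being an orthogonal complement), so the glue group $L/(S_\delta\oplus T_\delta)$ injects into $D_{T_\delta}$, while its order is $\sqrt{|D_{S_\delta}|\,|D_{T_\delta}|/|D_L|}=\sqrt{9\cdot 2/2}=3$, which cannot embed in $\IZ/2\IZ$ — a contradiction. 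Hence the index is $1$, so $T_\delta=\langle 6\rangle\oplus R_\delta\cong\langle 6\rangle\oplus A_2(-1)$ is primitive in $L$, and it remains to identify this lattice with $U(3)\oplus\langle -2\rangle$. Both are even of signature $(1,2)$, and their discriminant forms agree (the $3$‑adic part $\langle 2/3\rangle\oplus\langle -2/3\rangle$ of $\langle 6\rangle\oplus A_2(-1)$ becomes the hyperbolic form on $(\IZ/3\IZ)^2$ after the substitution $(a,b)\mapsto(a+b,a-b)$, and the $2$‑adic parts coincide with the discriminant form of $\langle -2\rangle$). One then produces the isometry explicitly: in $\langle 6\rangle\oplus A_2(-1)$ the vectors $(1,1,-1)$ and $(1,2,1)$ are isotropic and span a primitive copy of $U(3)$, whose orthogonal complement is generated by the $(-2)$‑vector $(1,2,0)$, and these three vectors form a basis; this gives $T_\delta\cong U(3)\oplus\langle -2\rangle$.

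The main obstacle is precisely this last identification in the nodal case: since $\rk=3<\ell(D)+2=4$ for $U(3)\oplus\langle -2\rangle$, Nikulin's uniqueness criterion does not apply, so the isomorphism $\langle 6\rangle\oplus A_2(-1)\cong U(3)\oplus\langle -2\rangle$ has to be exhibited by an explicit change of basis (or established by a finer genus‑theoretic argument). Conceptually, the whole dichotomy is governed by whether $S_\delta$ is unimodular, i.e.\ by Lemma~\ref{lem_chordal_unimodular}: in the nodal case the saturation $T_\delta$ absorbs the entire discriminant of $j(\langle 6\rangle)\oplus R_\delta$, whereas in the chordal case it does not, and the index $3$ persists.
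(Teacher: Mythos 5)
Your proof is correct and follows essentially the same route as the paper: both arguments pin down $\disc(T_\delta)$ by discriminant/glue bookkeeping against $\disc(S_\delta)\in\{1,9\}$ and $\disc(L)=2$, then identify $T_\delta$ via Nikulin's uniqueness criterion in the chordal case and a direct computation in the nodal case. The only real addition is that you write out explicitly the isometry $\langle 6\rangle\oplus A_2(-1)\cong U(3)\oplus\langle -2\rangle$ (with a correct basis, given the Gram matrix $\bigl(\begin{smallmatrix}-2&1\\1&-2\end{smallmatrix}\bigr)$ for $R_\delta$), which the paper leaves as ``an easy exercise''.
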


\begin{proof}
Since $\langle 6\rangle\oplus R_\delta$ has discriminant $3^2\cdot 2$, its only possible saturation is by adding a $3$-divisible class, so either $\disc(T_\delta)=3^2\cdot 2$ (primitive case), or $\disc(T_\delta)=2$ (nonprimitive case).
Recall that if $M$ is a nondegenerate lattice and $N\subset M$ a primitive sublattice, denoting by $\phi\colon M\to N^\ast$ the natural map, we have the following formula (see~\cite[p.47]{GHS_handbook}):
$$
\disc(N)\cdot \disc(M)=[N^\ast:\phi(M)]^2 \cdot \disc(N^\perp).
$$
Applying this formula to $M=L$ and either $N=T_\delta$ or $N=S_\delta$, we get that if $\delta$~is nodal, then necessarily $\disc(T_\delta)=3^2\cdot 2$, and if $\delta$ is chordal, then $\disc(T_\delta)=2$.
If $\delta$~is nodal, the isomorphism $\langle 6\rangle\oplus A_2(-1)\cong U(3)\oplus \langle -2\rangle$ is an easy exercise. If $\delta$ is chordal, the lattice $T_\delta$ is thus an even lattice of discriminant~$2$ and signature~$(1,2)$. By Nikulin's classification, such a lattice is unique up to isometry so ${T_\delta\cong U\oplus\langle -2\rangle}$.
\end{proof}

The representation $\IZ/3\IZ\to\Orth(L),1\mapsto\rho$
has $j(\langle 6\rangle)$ as fixed sublattice and $S(-1)$ as orthogonal complement. We modify this representation as follows. Recall that~$r_v$ denotes the reflection by~$v^\perp$. 

\begin{proposition}\label{prop_rho_delta}
The isometry $\id_{T_\delta}\oplus \rho_{|S_\delta}\in\Orth(T_\delta)\oplus\Orth(S_\delta)$
extends uniquely to an isometry $\rho_\delta\in\Orth(L)$ with fixed sublattice $T_\delta$. One has $\rho_\delta=r_{\rho(\delta)}\circ r_{\delta}\circ\rho$
and $\rho_\delta\in\Mon^2(L)$.
\end{proposition}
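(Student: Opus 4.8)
The plan is to verify directly that the formula $\rho_\delta\coloneqq r_{\rho(\delta)}\circ r_\delta\circ\rho$ defines the asserted isometry, and then to deal with uniqueness, with the exact fixed sublattice, and with membership in $\Mon^2(L)$ as separate short steps.

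First I would note that $r_\delta$ and $r_{\rho(\delta)}$ are integral isometries of $L$: since $\delta,\rho(\delta)\in L$ have square $-2$, one has $r_v(x)=x+\langle x,v\rangle_L\,v$, which has integer coefficients, so $r_\delta,r_{\rho(\delta)}\in\Orth(L)$ and hence $\rho_\delta\in\Orth(L)$. The core step is a bounded computation inside $R_\delta=\Span(\delta,\rho(\delta))\cong A_2(-1)$: since $R_\delta$ is $\rho$-invariant and $\rho^3=\id$, one gets $\langle\delta,\rho(\delta)\rangle_L=1$ and $\rho^2(\delta)=-\delta-\rho(\delta)$, after which a direct check yields $\rho_\delta(\delta)=\delta$ and $\rho_\delta(\rho(\delta))=\rho(\delta)$, so that $\rho_\delta$ fixes $R_\delta$ pointwise. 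As $\rho$ fixes $j(\langle 6\rangle)$ pointwise and $\delta,\rho(\delta)$ are orthogonal to $j(\langle 6\rangle)$, the two reflections act trivially on $j(\langle 6\rangle)$, hence $\rho_\delta$ fixes $j(\langle 6\rangle)\oplus R_\delta$ pointwise; since $T_\delta$ is the saturation of this sublattice in the torsion-free lattice $L$, it follows that $\rho_\delta$ fixes $T_\delta$ pointwise. Dually, $S_\delta=R_\delta^\perp$ is orthogonal to $\delta$ and $\rho(\delta)$, so the two reflections restrict to the identity on $S_\delta$, while $\rho$ preserves $S_\delta$ (it preserves $R_\delta$ and $j(\langle 6\rangle)$, hence $T_\delta$ and $T_\delta^\perp$); therefore $\rho_\delta$ agrees with $\rho_{|S_\delta}$ on $S_\delta$. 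Altogether $\rho_\delta$ restricts to $\id_{T_\delta}\oplus\rho_{|S_\delta}$ on $T_\delta\oplus S_\delta$, which is exactly the extension we want.

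Uniqueness is then formal: $\rk T_\delta+\rk S_\delta=3+20=23=\rk L$, so $T_\delta\oplus S_\delta$ has finite index in $L$ and an isometry of $L$ is determined by its restriction to it. For the fixed sublattice, I would take $x\in L$ with $\rho_\delta(x)=x$ and write $x=t+s$ in $(T_\delta\oplus S_\delta)\otimes\IQ$ with $t\in T_\delta\otimes\IQ$, $s\in S_\delta\otimes\IQ$; since $\rho_\delta$ preserves both rational summands, acting there as $\id$ and as $\rho_{|S_\delta}$, this forces $\rho(s)=s$. But $\rho_{|S(-1)}=\rho_0$ is multiplication by a generator of the Eisenstein $\cE$-module structure on $S(-1)$, hence has no nonzero fixed vector, so $s=0$ and $x\in(T_\delta\otimes\IQ)\cap L=T_\delta$ by primitivity of $T_\delta$. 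With the inclusion $T_\delta\subset\mathrm{Fix}(\rho_\delta)$ already established, this shows the fixed sublattice is exactly $T_\delta$.

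Finally, for $\rho_\delta\in\Mon^2(L)$: we already know $\rho\in\Mon^2(L)$, as $\rho$ is induced by an automorphism of $\Fano Y$. By the description of $\Mon^2(L)$ used in the proof of Theorem~\ref{th_isom_mod_spaces} (after Markman), an isometry of $L$ lies in $\Mon^2(L)$ precisely when it leaves the positive cone globally invariant, equivalently when it preserves the orientation of a positive-definite $3$-subspace of $L\otimes\IR$; a reflection in a vector of negative square has this property, so $r_\delta,r_{\rho(\delta)}\in\Mon^2(L)$ and therefore $\rho_\delta\in\Mon^2(L)$. The one delicate point in the whole argument is this last one, which relies on the precise shape of the $K3^{[2]}$-type monodromy group (and on the elementary, but easily mis-stated, fact that reflections in negative-square vectors are orientation-preserving); everything else is a bounded computation in $L$ together with the fixed-point-freeness of $\rho$ on $S(-1)$.
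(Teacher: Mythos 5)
Your proof is correct, but it reaches the conclusion by a genuinely different route from the paper's for the existence step. The paper first establishes abstractly that $\id_{T_\delta}\oplus\rho_{|S_\delta}$ extends to $L$, by splitting into the chordal case (where $S_\delta$ is unimodular by Lemma~\ref{lem_chordal_unimodular}, so $L=T_\delta\oplus S_\delta$ and nothing need be checked) and the nodal case (where it applies the gluing criterion of Lemma~\ref{lem2} after computing, via Nikulin's Proposition~1.15.1, that $\rho$ acts trivially on the discriminant group $D_{S_\delta}$); only afterwards does it identify the extension with $r_{\rho(\delta)}\circ r_{\delta}\circ\rho$. You invert the logic: you take the explicit formula as the definition, observe that reflections in $(-2)$-classes are integral so the formula is manifestly an isometry of $L$, and verify by a short computation in $R_\delta\cong A_2(-1)$ (using $\rho^2(\delta)=-\delta-\rho(\delta)$ and $\langle\delta,\rho(\delta)\rangle=1$) that it restricts to $\id$ on $T_\delta$ and to $\rho$ on $S_\delta$; uniqueness and the exactness of the fixed sublattice then follow from the finite index of $T_\delta\oplus S_\delta$ in $L$ and the absence of nonzero $\rho$-fixed vectors in $S(-1)\otimes\IQ$. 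Your route is cleaner and entirely self-contained, and it handles the nodal and chordal cases uniformly; what the paper's discriminant-group computation buys in exchange is the statement that $\rho_\delta$ acts trivially on $D_{S_\delta}$, which is not needed for the proposition itself but is explicitly reused later in the proof of Lemma~\ref{lem_tau_is_rho_delta_nodal}. Your treatment of the monodromy claim coincides with the paper's (Markman's description of $\Mon^2(L)$ for this deformation class as the orientation-preserving isometries, plus the observation that a reflection in a negative-square vector fixes a positive-definite $3$-space pointwise), and your care about the exact fixed sublattice is in fact slightly more complete than what the paper writes out.
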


\begin{proof}
We apply Lemma~\ref{lem_chordal_unimodular}.
If $\delta$ is a chordal root, then $S_\delta$ is unimodular so the result is clear. 
If $\delta$ is nodal, using~\cite[Proposition~1.15.1]{Nikulin} we know that the discriminant group $D_{S_\delta}$
is isometric to the restriction of $(-q_{R_\delta})\oplus q_{S(-1)}$ to the quotient of a subgroup of $D_{R_\delta}\oplus D_{S(-1)}$. 
As observed above, $\rho$ acts trivially on $D_{R_\delta}$, so to show that it acts trivially on $D_{S_\delta}$ it
is enough to show that it acts trivially on $D_{S(-1)}$. By the same argument as above, we can identify $D_{S(-1)}$
with the quotient of a subgroup of $D_{j(\langle 6\rangle)}\oplus D_L$ by changing a sign on the finite quadratic form. 
By contruction $\rho$ acts trivially on the first factor, and it acts also trivially on $D_L\cong\IZ/2\IZ$ since
$\rho$ has order three. So $\rho$ acts trivially on  $D_{S(-1)}$ and the result follows. It is easy to check
that $\rho_\delta=r_{\rho(\delta)}\circ r_{\delta}\circ\rho$. Indeed, both act trivially on $R_\delta$ and on $j(\langle 6\rangle)\subset \delta^\perp$ and act by $\rho$ on $S_\delta=R_\delta^\perp$.
By~\cite[Section~9.1, Theorem~9.1]{Markman_Torelli}, this shows that $\rho_\delta\in\Mon^2(L)$.
\end{proof}

\subsection{Nodal degenerations}

By~\cite[Theorem~6.1]{ACT}, the period map for cubic threefolds 
$$
\cP_S^{\rho_0,\xi}\colon \cC^\smooth_3\to
\frac{\IB^{10}\setminus(\calH_\nodal\cup \calH_\chordal)}{\IP\Gamma}
$$ 
extends to an isomorphism $\cP_S^{\rho_0,\xi}\colon \cC^\stable_3\to
\frac{\IB^{10}\setminus\calH_\chordal}{\IP\Gamma}$ mapping bijectively
the \emph{stable discriminant locus} $\Delta^\stable_3\coloneqq \cC^\stable_3\setminus \cC^\smooth_3$ to the $\IP\Gamma$-orbit of the nodal hyperplanes arrangement~$\calH_\nodal$. 
We recall here some aspects of their work needed in the sequel.

A general point in $\Delta^\stable_3$ is a cubic threefold $C$ with one nodal (or $A_1$) singular point. In suitable coordinates, the equation of~$C$ writes
$$
f(x_0,\ldots,x_4)\coloneqq x_0q(x_1,\ldots,x_4)+g(x_1,\ldots,x_4)=0
$$
where $q$ is a nondegenerate quadric surface and $g$ is a homogeneous cubic polynomial. By genericity we may assume that the locus $\{g=0\}$ is smooth and that it meets the locus $\{q=0\}$ transversally. The point $p_0=(1,0,0,0,0)\in\proj 4$ is then the node of $C$. We put $k(x_1,\ldots,x_5)\coloneqq g(x_1,\ldots,x_4)+x_5^3$, so that the cuspidal cyclic cubic fourfold $Y$ associated to $C$ has equation
$$
F(x_0,\ldots,x_5)\coloneqq x_0q(x_1,\ldots,x_4)+k(x_1,\ldots,x_5)
$$
with an $A_2$-singularity at $p=(1,0,0,0,0,0)\in\proj 5$. 

We denote by $H\subset\proj 5$ the hyperplane of equation $x_0=0$ and by $Q$, \resp $K$, the zero locus of $q$, \resp $k$, in $H$. By genericity, the intersection $\Sigma_\nodal\coloneqq Q\cap K\subset H$ is a smooth K3 surface. 
By projecting $Y\setminus\{p\}$ to~$H$ through the point~$p$, it is easy to check that the 
subvariety $\Fano{Y,p}\subset\Fano Y$ defined by the lines~$\ell$ of~$Y$ passing through~$p$ maps biholomorphically to~$\Sigma_\nodal$, sending~$\ell$ to~$\ell\cap H$ (see \cite[\S 7]{CG}). 
The Fano variety of lines $\Fano Y$ is a singular holomorphic symplectic variety whose singular locus is contained in the surface $\Fano{Y,p}$ (see~\cite[Proposition~3.5, Theorem~3.6]{Lehn}) and it is birational to $\Sigma_\nodal^{[2]}$. To see it, first observe that
the Cartier divisor $\{q=0\}\cap Y$ in~$\proj 5$ is the cone over~$\Sigma_\nodal$ with vertex~$p$. A general line $\ell\subset Y$ intersects the quadric $\{q=0\}$ in two points which project to two points of~$\Sigma_\nodal$, 
yielding a rational map $\mu\colon\Fano Y\to \Sigma_\nodal^{[2]}$. The birational inverse is given as follows: given two distinct points $x,y\in \Sigma_\nodal$, the plane $\Span(x,y,p)$ cuts~$Y$ along the degenerate cubic curve 
which is the union of the lines $\Span(p,x)$, $\Span(p,y)$ and a third line $\ell\in\Fano Y$ given by the image of $\{x,y\}\in \Sigma_\nodal^{[2]}$.

The covering automorphism $\sigma$ restricts on $H$ to an order three non-symplectic automorphism~$\tau_\nodal$ of~$\Sigma_\nodal$.
 This family of K3 surfaces has been studied by Artebani--Sarti~\cite[Proposition~4.7]{AS}: 
the automorphism~$\tau_\nodal$ fixes a genus $4$ curve which, following the discussion above, is isomorphic to the Fano variety $\Fano{C,p_0}$ of lines though~$p_0$. The invariant lattice in $\HH^2(\Sigma_\nodal,\IZ)$ 
is isometric to~$U(3)$ and its orthogonal complement is isometric to~$U\oplus U(3)\oplus E_8(-1)^{\oplus 2}$.

Let $\delta\in S(-1)$ be a nodal root (a precise choice is unnecessary since the group~$\Gamma$ acts transitively on the set of nodal roots by~\cite[Lemma~6.2]{ACT}). Recall that by Lemma~\ref{lem_saturation}, the lattice $T_\delta$ is isometric to $U(3)\oplus\langle -2\rangle$. An elementary computation shows that the second summand is generated by the $(-2)$-class 
$$
e_\nodal\coloneqq-\vartheta+2\delta+2\rho(\delta),
$$ 
where $\vartheta$ is as above the square $6$ polarization. Observe that the orthogonal complement of $\IZ e_\nodal$ in $L$ is isometric to the K3 lattice $U^{\oplus 3}\oplus E_8(-1)^{\oplus 3}$. We thus obtain an embedding $U(3)\hookrightarrow T_\delta\subset L$ whose image is contained in $(\IZ e_\nodal)^\perp$ and whose orthogonal complement in~$(\IZ e_\nodal)^\perp$ is isometric to $S_\delta=U\oplus U(3)\oplus E_8(-1)^{\oplus 2}$. The isometry $\id_{U(3)}\oplus {\rho_\delta}_{|S_\delta}$ extends uniquely to an isometry $\widetilde{\rho_\delta}$ of~$(\IZ e_\nodal)^\perp$ which is the 
restriction of $\rho_\delta$ to $(\IZ e_\nodal)^\perp\subset L$.

\begin{lemma}\label{lem_tau_is_rho_delta_nodal} Let $\delta\in S(-1)$ be a nodal root.
The action  of~$\tau_\nodal$ on~$\HH^2(\Sigma_\nodal,\IZ)$ is conjugated to the isometry  $\widetilde{\rho_\delta}$ of~$(\IZ e_\nodal)^\perp$.
\end{lemma}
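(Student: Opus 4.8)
The plan is to compare two non-symplectic order-three isometries with the same invariant lattice $U(3)$ and the same orthogonal complement $S_\delta = U\oplus U(3)\oplus E_8(-1)^{\oplus 2}$, and to deduce that they are conjugate by invoking uniqueness of such an isometry together with the identification of fixed loci coming from the geometry established earlier in Section~\ref{s_degen}. First I would fix the isomorphism $\Fano{Y,p}\cong\Sigma_\nodal$ recalled above, which is $\sigma$-equivariant (it sends a line $\ell$ through $p$ to $\ell\cap H$, and $\sigma$ acts on $H$ as $\tau_\nodal$). This gives a commutative diagram relating $\sigma^\ast$ on $\HH^2(\Fano Y,\IZ)$ to $\tau_\nodal^\ast$ on $\HH^2(\Sigma_\nodal,\IZ)$ via the restriction/pullback morphisms to the embedded surface $\Fano{Y,p}$; since $\Fano Y$ is birational to $\Sigma_\nodal^{[2]}$ and the latter carries the natural order-three automorphism $\tau_\nodal^{[2]}$, there is a Hodge isometry $\HH^2(\Sigma_\nodal^{[2]},\IZ)\cong \HH^2(\Sigma_\nodal,\IZ)\oplus\IZ e_\nodal$ with $\HH^2(\Sigma_\nodal,\IZ)\cong U^{\oplus 3}\oplus E_8(-1)^{\oplus 3}$ and $e_\nodal^2=-2$, under which $\tau_\nodal^{[2]}$ acts as $\tau_\nodal^\ast\oplus\id$ on the summand $\IZ e_\nodal$ (the exceptional class of the Hilbert square is always invariant).

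Next I would identify, on the lattice $L$, the relevant objects: by Lemma~\ref{lem_saturation}(1), $T_\delta\cong U(3)\oplus\langle -2\rangle$, with the $\langle -2\rangle$-summand generated by $e_\nodal=-\vartheta+2\delta+2\rho(\delta)$ (the elementary computation already indicated), and $(\IZ e_\nodal)^\perp\cong U^{\oplus 3}\oplus E_8(-1)^{\oplus 3}$ is the K3 lattice. Inside $(\IZ e_\nodal)^\perp$ we have the embedded copy of $U(3)\subset T_\delta$ with orthogonal complement $S_\delta=U\oplus U(3)\oplus E_8(-1)^{\oplus 2}$, and $\widetilde{\rho_\delta}$ is by definition $\id_{U(3)}\oplus {\rho_\delta}|_{S_\delta}$, which by Proposition~\ref{prop_rho_delta} agrees with the restriction of $\rho_\delta$. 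On the geometric side, $\tau_\nodal^\ast$ on $\HH^2(\Sigma_\nodal,\IZ)$ has, by Artebani--Sarti~\cite[Proposition~4.7]{AS}, invariant lattice $U(3)$ and coinvariant lattice $U\oplus U(3)\oplus E_8(-1)^{\oplus 2}$. So both $\widetilde{\rho_\delta}$ and (the transported) $\tau_\nodal^\ast$ are order-three isometries of the K3 lattice with invariant lattice $\cong U(3)$ and coinvariant lattice $\cong S_\delta$, and in both cases the induced action on $D_{S_\delta}$ is trivial (for $\widetilde{\rho_\delta}$ this is Proposition~\ref{prop_rho_delta}; for $\tau_\nodal^\ast$ it follows from the order-three condition together with $D_{S_\delta}$ being $3$-torsion of length $\le\ell(S_\delta)$, or directly from \cite{AS}).

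The key step, and the main obstacle, is to upgrade ``same invariant and coinvariant lattices, same discriminant action'' to ``conjugate by an isometry of the K3 lattice''. Here I would argue as follows: a primitive embedding $U(3)\hookrightarrow U^{\oplus 3}\oplus E_8(-1)^{\oplus 3}$ with orthogonal complement $U\oplus U(3)\oplus E_8(-1)^{\oplus 2}$ is unique up to isometry of the K3 lattice — this follows from Nikulin's uniqueness criterion~\cite[Theorem~1.14.4]{Nikulin} since the K3 lattice is unimodular, $U(3)$ has length $2\le \rk U(3)$, and $\ell(D_{S_\delta})$ is controlled — so after conjugating by an isometry of the K3 lattice we may assume both $\widetilde{\rho_\delta}$ and $\tau_\nodal^\ast$ fix the same copy of $U(3)$ pointwise and preserve the same $S_\delta$. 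Restricted to $S_\delta$, both are order-three isometries acting trivially on $D_{S_\delta}$ and with no nonzero fixed vectors; by the classification of such isometries via the associated Hermitian $\cE$-module structure — $S_\delta$ becomes a free $\cE$-module and the isometry is multiplication by $\xi$ — any two are conjugate by an element of $\Orth(S_\delta)$, and such an element extends to an isometry of the K3 lattice acting trivially on $U(3)$ by Lemma~\ref{lem2} (it acts by $\pm\id$ on $D_{S_\delta}$, and we may arrange $+\id$). This yields the conjugacy $\widetilde{\rho_\delta}\sim\tau_\nodal^\ast$ in $\Orth\big((\IZ e_\nodal)^\perp\big)$, completing the proof. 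The delicate point requiring care is the $\cE$-module uniqueness statement for $S_\delta$: one must check that the Hermitian module attached to an order-three fixed-point-free isometry of $S_\delta$ with trivial discriminant action is unique up to $\cE$-isometry, which can be done by verifying that its invariants (rank over $\cE$, signature, and the ideal-theoretic data of the Hermitian form, all of which are pinned down by $S_\delta\cong U\oplus U(3)\oplus E_8(-1)^{\oplus 2}$ and the triviality on the discriminant) determine it, exactly as in the analysis of Section~\ref{ss_hermitian} and \cite{ACT}.
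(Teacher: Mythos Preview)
Your approach is genuinely different from the paper's and is purely lattice-theoretic rather than geometric. The paper instead argues: since $\widetilde{\rho_\delta}$ acts trivially on $D_{S_\delta}$, by \cite[Proposition~3.1]{AS} there exists a K3 surface with a non-symplectic order-three automorphism whose cohomological action is conjugate to $\widetilde{\rho_\delta}$; the fixed locus of this automorphism (determined by the lattice invariants alone) is a single genus~$4$ curve, and then \cite[Proposition~4.7]{AS}, by analysing the linear system of that curve, shows that this K3 surface with its automorphism belongs to the same irreducible family as $(\Sigma_\nodal,\tau_\nodal)$. Deformation equivalence within one family forces the two cohomological actions to be conjugate. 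In short, the paper outsources the uniqueness question entirely to the geometric classification of Artebani--Sarti.

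Your route has a real gap precisely at the step you flag as delicate. You correctly reduce to showing that any two fixed-point-free order-three isometries of $S_\delta=U\oplus U(3)\oplus E_8(-1)^{\oplus 2}$ acting trivially on the discriminant are $\Orth(S_\delta)$-conjugate, equivalently that the associated Hermitian $\cE$-lattice is unique up to $\cE$-isometry. This statement is true, but it is not the content of Section~\ref{ss_hermitian} nor of the passages of \cite{ACT} invoked there: those set up the Hermitian form on the fixed lattice $S$ attached to a \emph{given} isometry, they do not classify Hermitian $\cE$-lattices with prescribed underlying $\IZ$-lattice. Closing the gap requires either a genus-theory argument for indefinite Hermitian $\cE$-lattices (local invariants plus a one-class-per-genus statement in this rank and signature) or a direct citation of a conjugacy theorem; the sketch ``rank, signature, and ideal-theoretic data determine it'' is not a proof. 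Note also that your first paragraph is not used in the actual argument and is hazardous as written: here $\Fano Y$ is singular, so speaking of $\HH^2(\Fano Y,\IZ)$ with its usual lattice structure and transporting Hodge isometries through the birational map $\mu$ would itself require justification you neither give nor need.
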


\begin{proof}
In a similar manner as in Section~\ref{ss_modnonsympl}, we introduce the moduli space
$\cK_{U(3)}^{\widetilde{\rho_\delta},\xi}$ parame\-tri\-zing K3 surfaces with a non-symplectic  automorphism of order three whose action on cohomology is conjugated to $\widetilde{\rho_\delta}$, with a period map 
$$
\cP_{U(3)}^{\widetilde{\rho_\delta},\xi}\colon \cK_{U(3)}^{\widetilde{\rho_\delta},\xi}\longrightarrow \Omega_{U(3)}^{\widetilde{\rho_\delta},\xi}\coloneqq \{\omega\in\IP((S_\delta)_{\xi})\,|\, h_{S_\delta}(\omega,\omega)>0\},
$$
where $(S_\delta)_\xi$ is the eigenspace of $\rho_\delta$ on $(S_\delta)\otimes_\IZ\IC$ for the eigenvalue $\xi$ and $h_{S_\delta}$ is the Hermitian product induced by the bilinear form on $S_\delta$.
As observed in the proof of~Proposition~\ref{prop_rho_delta}, the isometry $\rho_\delta$ acts trivially on the discriminant group $D_{S_\delta}$, so $\widetilde{\rho_\delta}$ does also, hence following~\cite[Section~3, Proposition~3.1]{AS} there exists a K3 surface with a non-symplectic automorphism of order $3$ whose action on the second cohomology space is conjugated to $\widetilde{\rho_\delta}$. As explained in $\loccit$, the fixed locus of this automorphism, which depends only on lattice theoretical invariants, consists of one genus $4$ curve. By considering the linear system associated to this curve, Artebani--Sarti~\cite[Section~4, Proposition~4.7]{AS} prove by geometric arguments that this K3 surface with this automorphism belongs to the same family as the K3 surface $\Sigma_\nodal$ with automorphism $\tau_\nodal$. This shows that the action of~$\tau_\nodal$ on~$\HH^2(\Sigma_\nodal,\IZ)$ is conjugated to~$\widetilde{\rho_\delta}$.
\end{proof}

We consider similarly the natural automorphism $\tau_\nodal^{[2]}$ induced on $\Sigma_\nodal^{[2]}$ by $\tau_\nodal$, which satisfies the relation $\mu\circ\sigma=\tau_\nodal^{[2]}\circ\mu$ whenever $\mu\colon\Fano Y\to \Sigma_\nodal^{[2]}$~is defined. Its invariant lattice is~$T_\delta$ so it defines a point in the moduli space $\cN_{T_\delta}^{\rho_\delta,\xi}$ parametrizing irreducible holomorphic symplectic manifolds deformation equivalent to the Hilbert square of a K3 surface, with an order three automorphism whose action on cohomology is conjugated to $\rho_\delta$. Here we have implicitly fixed the primitive embedding $j_\delta \colon T_\delta\hookrightarrow L$ discussed above. Following~\cite[Section~3]{BCS_ball},
since the period map for this moduli space is not injective, we introduce a smaller moduli space containing the point $(\Sigma_\nodal^{[2]},\tau_\nodal^{[2]})$ as follows. Take a marking $\eta\colon \HH^2(\Sigma_\nodal^{[2]},\IZ)\to L$. The image by $\eta$ of the invariant positive cone $(C_{\Sigma_\nodal^{[2]}})^{\tau_\nodal}$ of $\Sigma_\nodal^{[2]}$ defines a connected component~$C_{T_\delta}$ of the positive cone of $T_\delta\otimes\IR$. The hyperplanes $\delta^\perp$, where $\delta\in T_\delta$ is an MBM class, cut out a chamber decomposition of $C_{T_\delta}$. One of these chambers, denoted~$K(T_\delta)$, contains the image by~$\eta$ of the invariant K\"ahler cone  $(K_{\Sigma_\nodal^{[2]}})^{\tau_\nodal}$ of~$\Sigma_\nodal^{[2]}$. We denote by~$\cN_{K(T_\delta)}^{\rho_\delta,\xi}$ the set of points $(X,\eta,\sigma,\iota)$ in $\cN_{T_\delta}^{\rho_\delta,\xi}$ which are \emph{$K(T_\delta)$-general}, meaning that $\eta(K_X^\sigma)=K(T_\delta)$. When the surface~$\Sigma_\nodal$ considered in our 
geometric construction above is a very general point of the moduli space~$\cK_{U(3)}^{\widetilde{\rho_\delta},\xi}$, we may assume that the N\'eron--Severi group $\NS(\Sigma_\nodal^{[2]})$ is isometric to $T_\delta$ (otherwise stated $\NS(\Sigma_\nodal)\cong U(3)$), so by Lemma~\cite[Lemma~5.0.13]{BCS_ball} the point $(\Sigma_\nodal^{[2]},\tau_\nodal^{[2]})$ is $K(T_\delta)$-general and lives in $\cN_{K(T_\delta)}^{\rho_\delta,\xi}$.

\begin{proposition}\label{prop_birat_nodal} Let $\delta\in S(-1)$ be a nodal root.
The stable discriminant locus~$\Delta^\stable_3$ is birational to the moduli space $\cN_{K(T_\delta)}^{\rho_\delta,\xi}$.
\end{proposition}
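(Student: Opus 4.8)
The plan is to transfer the isomorphism of Theorem~\ref{th_isom_mod_spaces} along the boundary, using the extended period map of Allcock--Carlson--Toledo. More precisely, we already have the commutative diagram identifying $\cC^\smooth_3$ with $\BCSmod$ through the period maps to $\frac{\IB^{10}\setminus\calH}{\IP\Gamma}$, and by~\cite[Theorem~6.1]{ACT} this extends to an isomorphism $\cC^\stable_3\cong\frac{\IB^{10}\setminus\calH_\chordal}{\IP\Gamma}$ sending $\Delta^\stable_3$ bijectively onto the $\IP\Gamma$-orbit of $\calH_\nodal$. So the first step is to produce, on the other side, a period map from (an open dense subset of) $\cN_{K(T_\delta)}^{\rho_\delta,\xi}$ to the $\IP\Gamma$-orbit of $\calH_\nodal$, or rather to a quotient of the ball $\Omega_{U(3)}^{\widetilde{\rho_\delta},\xi}$ which one identifies with (an open dense part of) the hyperplane $\delta^\perp\cap\IB^{10}$.

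The key geometric input, already assembled in the excerpt, is the chain of constructions attached to a generic nodal cubic $C$: its cyclic cover $Y$ has an $A_2$-singularity, $\Fano Y$ is birational to $\Sigma_\nodal^{[2]}$ for the K3 surface $\Sigma_\nodal=Q\cap K$ carrying the order-three automorphism $\tau_\nodal$, and by Lemma~\ref{lem_tau_is_rho_delta_nodal} the action of $\tau_\nodal$ is conjugated to $\widetilde{\rho_\delta}$ while the induced $\tau_\nodal^{[2]}$ on $\Sigma_\nodal^{[2]}$ has invariant lattice $T_\delta$ and action conjugated to $\rho_\delta$; moreover for very general $C$ the point $(\Sigma_\nodal^{[2]},\tau_\nodal^{[2]})$ lies in $\cN_{K(T_\delta)}^{\rho_\delta,\xi}$. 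Thus I would define a rational map $\Delta^\stable_3\dashrightarrow\cN_{K(T_\delta)}^{\rho_\delta,\xi}$ by $C\mapsto(\Sigma_\nodal^{[2]},\tau_\nodal^{[2]})$ (with the marking and polarization inherited from the framed picture and the fixed embedding $j_\delta$), defined on the locus of $C$ with a single $A_1$-point and with $Q,K$ meeting transversally in a K3 with Picard number one. The second step is to show this is a birational equivalence: the period of $(\Sigma_\nodal^{[2]},\tau_\nodal^{[2]})$ is the period of $\Sigma_\nodal$, which sits in $\Omega_{U(3)}^{\widetilde{\rho_\delta},\xi}\cong(S_\delta)_\xi$, and this matches exactly the way a period point $\omega\in\delta^\perp\cap\IB^{10}$ decomposes once one quotients out the degeneracy lattice $R_\delta$. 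So the composite $\Delta^\stable_3\dashrightarrow\cN_{K(T_\delta)}^{\rho_\delta,\xi}\xrightarrow{\cP_{U(3)}^{\widetilde{\rho_\delta},\xi}}\frac{\Omega_{U(3)}^{\widetilde{\rho_\delta},\xi}\setminus(\text{hyperplanes})}{\Gamma'}$ agrees birationally with the ACT parametrization of $\calH_\nodal/\IP\Gamma$; since both $\cP_{U(3)}^{\widetilde{\rho_\delta},\xi}$ (by the Torelli-type results for K3 with non-symplectic automorphisms, as in~\cite{AS} and~\cite{BCS_ball}) and the ACT boundary map are isomorphisms onto their images, and the two arithmetic groups acting on the $9$-dimensional ball $\delta^\perp\cap\IB^{10}$ are commensurable (they differ by finite index, coming from the stabilizer of $\delta$ in $\IP\Gamma$ versus the monodromy group of the K3 family), one concludes birationality.

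Concretely I would organize the proof as: (i) fix the nodal root $\delta$ and recall $T_\delta\cong U(3)\oplus\langle-2\rangle$, $S_\delta=R_\delta^\perp\cong U\oplus U(3)\oplus E_8(-1)^{\oplus 2}$, and the embedding $U(3)\hookrightarrow(\IZ e_\nodal)^\perp$ with orthogonal $S_\delta$, already set up before Lemma~\ref{lem_tau_is_rho_delta_nodal}; (ii) define the map $C\mapsto(\Sigma_\nodal^{[2]},\tau_\nodal^{[2]})$ on the generic stratum of $\Delta^\stable_3$ and check it lands in $\cN_{K(T_\delta)}^{\rho_\delta,\xi}$, using $\NS(\Sigma_\nodal)\cong U(3)$ generically together with~\cite[Lemma~5.0.13]{BCS_ball}; (iii) identify, at the level of periods, $\Omega_{U(3)}^{\widetilde{\rho_\delta},\xi}$ with $\delta^\perp\cap\IB^{10}=\Omega_{S_\delta}^{\rho_\delta,\xi}$ and check the diagram of period maps commutes; (iv) compare the arithmetic groups and invoke the respective global Torelli / period isomorphisms to upgrade the commuting diagram of dominant maps into a birational identification. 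The main obstacle I expect is step (iv), namely controlling the two arithmetic groups: on the cubic side the relevant group is the image in $\Orth(S_\delta)$ of the stabilizer of $\delta$ inside $\IP\Gamma=\IP\Gamma_S^{\rho_0,\xi}$ (modulo the reflections $r_\delta,r_{\rho(\delta)}$ which act trivially on $S_\delta$), while on the K3 side it is the monodromy group $\Gamma_{U(3)}^{\widetilde{\rho_\delta},\xi}$ from~\cite{AS},~\cite{BCS_ball}; one must show these agree up to finite index so that the induced map on ball quotients is birational, which requires a careful bookkeeping of discriminant-form actions (the lengths and $3$-primary parts of $D_{S_\delta}$) of the kind used in the proof of Theorem~\ref{th_isom_mod_spaces}, together with Lemma~\ref{lem1} to ensure surjectivity onto $\Orth(D_{S_\delta})$. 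The birationality (rather than isomorphism) is exactly what absorbs any residual finite-index discrepancy and the codimension-one behaviour of $\mu\colon\Fano Y\dashrightarrow\Sigma_\nodal^{[2]}$.
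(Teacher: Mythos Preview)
Your strategy is the same as the paper's: identify both sides with open subsets of the quotient of the $9$-dimensional ball $\delta^\perp\cap\IB^{10}\cong\Omega_{U(3)}^{\widetilde{\rho_\delta},\xi}=\Omega_{T_\delta}^{\rho_\delta,\xi}$ by the relevant arithmetic group, using the ACT extension on the cubic side and the period map of~\cite{BCS_ball} on the hyperk\"ahler side, and conclude birationality.

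Two places where the paper is sharper than your outline. First, your anticipated ``main obstacle'' in step~(iv) does not arise: the arithmetic groups are not merely commensurable but \emph{equal}, $\Gamma_{T_\delta}^{\rho_\delta,\xi}=\Gamma_{U(3)}^{\widetilde{\rho_\delta},\xi}$, and one has an equality of orbit spaces $\Omega_{U(3)}^{\widetilde{\rho_\delta},\xi}/\Gamma_{U(3)}^{\widetilde{\rho_\delta},\xi}=(\delta^\perp\cap\Omega_S^{\rho_0,\xi})/\Gamma_S^{\rho_0,\xi}$, so no finite-index bookkeeping is needed. Second, the reason the final statement is only birational (not an isomorphism) is not a group discrepancy nor the indeterminacy of $\mu$, but the presence of an \emph{extra} hyperplane arrangement $\calH'_{T_\delta}$ in the image of $\cP_{T_\delta}^{\rho_\delta,\xi}$ (coming from MBM classes $\varepsilon\in L$ that split as $\varepsilon_1+\varepsilon_2$ with negative components in $T_\delta\otimes\IQ$ and $S_\delta\otimes\IQ$), which you do not mention; removing it costs exactly a codimension-one locus. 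The paper also routes the final step through the K3 moduli space $\cK_{U(3)}^{\widetilde{\rho_\delta},\xi}$ and invokes~\cite[Proposition~5.3]{CMJL} for $\Delta^\stable_3\sim\cK_{U(3)}^{\widetilde{\rho_\delta},\xi}$, rather than working directly with your map $C\mapsto(\Sigma_\nodal^{[2]},\tau_\nodal^{[2]})$.
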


\begin{proof}
Since the isometry $\rho_0$ is the restriction of~$\rho$ to $S(-1)$ and since $\rho_\delta$ is the restriction of $\rho$ to $S_\delta\subset S(-1)$, we have an inclusion of eigenspaces $(S_\delta)_\xi\subset S_\xi$. If $\omega\in \delta^\perp\cap \Omega_S^{\rho_0,\xi}$, it is certainly also orthogonal to $\rho(\delta)$, so the period $\omega$ lives in the $9$-dimensional complex ball $\Omega_{U(3)}^{\widetilde{\rho_\delta},\xi}=\delta^\perp\cap \Omega_S^{\rho_0,\xi}\subset\calH_\nodal$. By~\cite[Theorem~9.1]{AST}, the period map considered in the proof of Lemma~\ref{lem_tau_is_rho_delta_nodal} induces a bijection
$$
\cP_{U(3)}^{\widetilde{\rho_\delta},\xi}\colon \cK_{U(3)}^{\widetilde{\rho_\delta},\xi}\longrightarrow \frac{\Omega_{U(3)}^{\widetilde{\rho_\delta},\xi}\setminus\calH_{U(3)}}{\Gamma_{U(3)}^{\widetilde{\rho_\delta},\xi}}
$$ 
where $\calH_{U(3)}\coloneqq \bigcup\limits_{\substack{\varepsilon\in S_\delta\\\varepsilon^2=-2}}\varepsilon^\perp$. The group $\Gamma_{U(3)}^{\widetilde{\rho_\delta},\xi}$ is the set of isometries of the K3 lattice~$(\IZ e_\nodal)^\perp$ which commute with $\widetilde{\rho_\delta}$. We thus clearly have an equality of orbit spaces 
$$
\frac{\Omega_{U(3)}^{\widetilde{\rho_\delta},\xi}}{\Gamma_{U(3)}^{\widetilde{\rho_\delta},\xi}}=\frac{\delta^\perp\cap\Omega_{S}^{\rho_0,\xi}}{\Gamma_S^{\rho_0,\xi}}.
$$

The extension of the period map $\cP_S^{\rho_0,\xi}\colon \cC^\smooth_3\to
\frac{\IB^{10}\setminus(\calH_\nodal\cup \calH_\chordal)}{\IP\Gamma}$ 
to a general nodal cubic~$C$ is defined in~\cite{ACT} by using the period of the associated
K3 surface $\Sigma_\nodal$:
$$
\cP_S^{\rho_0,\xi}(C)\coloneqq \cP_{U(3)}^{\widetilde{\rho_\delta},\xi}(\Sigma_\nodal)\in\calH_\nodal.
$$
By studying the limit Hodge structure of the nodal degeneration of a cubic threefold, Allcock--Carlson--Toledo~\cite[Chapter~6]{ACT} show that this definition extends holomorphically $\cP_S^{\rho_0,\xi}$ to an isomorphism  between $\cC^\stable_3$ and
$\frac{\IB^{10}\setminus\calH_\chordal}{\IP\Gamma}$ mapping bijectively $\Delta^\stable_3$ to 
the divisor $\calH_\nodal/\IP\Gamma$.

Turning back to the automorphism $\tau_\nodal^{[2]}$ on $\Sigma_\nodal^{[2]}$, which gives a point in the moduli space~$\cN_{K(T_\delta)}^{\rho_\delta,\xi}$, the period map constructed in~\cite[Theorem~5.0.17]{BCS_ball} produces a bijection
$$
\cP_{T_\delta}^{\rho_\delta,\xi}\colon\cN_{K(T_\delta)}^{\rho_\delta,\xi}\longrightarrow \frac{\Omega_{T_\delta}^{\rho_\delta,\xi}\setminus\left(\calH_{T_\delta}\cup\calH'_{T_\delta}\right)}{\Gamma_{T_\delta}^{\rho_\delta,\xi}}.
$$
The notation is similar to those of Section~\ref{ss_modnonsympl}, it is easy to check that 
$\Omega_{T_\delta}^{\rho_\delta,\xi}=\Omega_{U(3)}^{\widetilde{\rho_\delta},\xi}$
and that $\Gamma_{T_\delta}^{\rho_\delta,\xi}=\Gamma_{U(3)}^{\widetilde{\rho_\delta},\xi}$.
The hyperplane arrangements are
$$
\calH_{T_\delta}= \bigcup\limits_{\substack{\varepsilon\in S_\delta\\\varepsilon^2=-2}}\varepsilon^\perp=\calH_{U(3)}
$$ 
and $\calH'_{T_\delta}$ is the union of the hyperplanes $\varepsilon^\perp$ which meet $K(T_\delta)$, for those MBM classes $\varepsilon\in L$ which admit a decomposition $\varepsilon=\varepsilon_1+\varepsilon_2$ with $\varepsilon_1\in T_\delta\otimes_\IZ \IQ$, 
${\varepsilon_2\in S_\delta\otimes_\IZ \IQ}$, $\varepsilon_1^2<0$, and $\varepsilon_2^2<0$.

Since $\cP_{U(3)}^{\widetilde{\rho_\delta},\xi}$ and $\cP_{T_\delta}^{\rho_\delta,\xi}$ are bijections and $\frac{\Omega_{U(3)}^{\widetilde{\rho_\delta},\xi}\setminus\calH_{U(3)}}{\Gamma_{U(3)}^{\widetilde{\rho_\delta},\xi}}$ is clearly birational to $\frac{\Omega_{T_\delta}^{\rho_\delta,\xi}\setminus\left(\calH_{T_\delta}\cup\calH'_{T_\delta}\right)}{\Gamma_{T_\delta}^{\rho_\delta,\xi}}$, it follows that the moduli spaces  $\Delta^\stable_3$ and $\cK_{T_\delta}^{\rho_\delta,\xi}$, which are birationally equivalent by~\cite[Proposition~5.3]{CMJL}, are also birational to~$\cN_{K(T_\delta)}^{\rho_\delta,\xi}$.
\end{proof}

\begin{remark}  Let us denote by $\kM_4$ the moduli space of genus $4$ curves. From the proof of Proposition~\ref{prop_birat_nodal}, we get birational isomorphisms
$$
\Delta^\stable_3 \sim \cK_{U(3)}^{\widetilde{\rho_\delta},\xi}\sim  \cN_{K(T_\delta)}^{\rho_\delta,\xi}\sim\kM_4
$$
obtained by mapping a general nodal cubic threefold $C$ with node $p_0$ and associated K3 surface $\Sigma_\nodal$ respectively to $\Sigma_\nodal,\Sigma_\nodal^{[2]}$ and $\Fano{C,p_0}$.

On a very general period point $\omega\in\calH_\nodal$, the birational equivalence between $\Delta^\stable_3$ and $\cK_{T_\delta}^{\rho_\delta,\xi}$ can be seen as follows: $\omega$ corresponds to a general nodal cubic $C$ whose uniquely associated K3 surface $\Sigma_n$ is smooth. 
Conversely, the ramified triple cover of $\proj 1\times\proj 1$ branched along a smooth non-hyperelliptic genus $4$ curve of bidegree $(3,3)$ is a K3 surface living in $\cK_{U(3)}^{\widetilde{\rho_\delta},\xi}$, from which the nodal cubic threefold can be recovered (see~Kond\=o~\cite[Theorem~1]{Kondo}). 
\end{remark}

\begin{remark}\label{rem_relate_nodal} To relate with the general statement given in Proposition~\ref{prop_birational_map}, we observe that if we take a very general period point $\omega\in\calH_\nodal$, then the variety $\Sigma_\nodal^{[2]}$ lives in the fiber $\cP_{\langle 6\rangle}^{-1}(\omega)$ and the
birational map $\beta=\tau_\nodal^{[2]}$ is everywhere defined. Observe that $\beta^\ast=(\tau_\nodal^{[2]})^\ast$ is conjugated to $\rho_\delta$. Indeed, both have the same action on the K3 lattice $(\IZ e_\nodal)^\perp$ and act trivially on the class $e_\nodal$. Denoting by $\eta$ a marking, the element $w\in W_\Exc(\Sigma_\nodal^{[2]})$ is here $w=r_{\eta^{-1}(\delta)}\circ r_{\eta^{-1}\rho(\delta)}$.
\end{remark}

\subsection{Chordal degenerations}

Consider the Veronese embedding $\nu_4\colon\proj 1\to\proj 4$, whose image is the standard
rational normal quartic curve $\cR$. Its secant variety ${T\coloneqq\rm{Sec}(\cR)}$ is a cubic hypersurface in $\proj 4$, called a \emph{chordal cubic threefold}, which is singular  exactly along the curve $\cR$. By~\cite[Theorem~1.3]{Allcock} the cubic~$T$ is a strictly $\PGL_5(\IC)$-semistable point of the linear system $\ls{\cO_{\proj 4} (3)}$. Since the stabilizer of~$T$ is $\PGL_2(\IC)$, its $\PGL_5(\IC)$-orbit is $21$-dimensional. The closure of this orbit is thus a codimension $13$ locus in $\ls{\cO_{\proj 4} (3)}$. Blowing it up, we get a $\PGL_5(\IC)$-equivariant projective morphism $\cZ\to\ls{\cO_{\proj 4} (3)}$. As explained in~\cite[Chapter~2]{ACT}, the fiber of this blowup over the chordal cubic $T$ can be identified with an unordered sequence of $12$~points in~$\cR$ as follows. Take a pencil $(C_t)_{t\in \IB^1}$ of cubics degenerating to $C_0=T$, such that the pencil is transverse to the orbit of $T$. For any $t\neq 0$, the smooth cubic~$C_t$ cuts the curve~$\cR$ in $12$~points, whose limit for 
$t\to 0$ is an unordered $12$-tuple in~$\cR$. 

We denote by $\calC_3^\stable$ the GIT moduli space of $\PGL_5(\IC)$-stable points in $\cZ$ (as in~\cite[p.27]{ACT} the notation is tricky: $\calC_3^\stable$ 
is not a blowup of $\cC_3^\stable$). By~\cite[Theorem~3.2]{ACT}, the \emph{stable chordal locus} $\Xi^\stable_3\coloneqq \calC_3^\stable\setminus\cC_3^\stable$ can be identified with the  locus in $\cR^{12}/\kS_{12}$ of unordered $12$-tuples with no point of multiplicity greater or equal than $6$. 

A general point $(T,\mu)$ in $\Xi_3^\stable$ is a $12$-tuple $\mu$ of distinct unordered points in~$\cR$. Consider the elliptic K3 surface $\Sigma_\chordal$ over~$\cR$ which has a cuspidal fiber over each point of~$\mu$. This family of K3 surfaces has been studied by Artebani--Sarti~\cite[Proposition~4.2]{AS}: it admits a non-symplectic automorphism~$\tau_\chordal$ of order three, preserving the fibration, whose fixed locus consists of the zero section of the fibration and of a genus~$5$ curve which is the ramified double cover of~$\cR$ branched along the points~$\mu$. The invariant lattice in~$\HH^2(\Sigma_\chordal,\IZ)$ is isometric to~$U$ and its orthogonal complement is isometric to $U^{\oplus 2}\oplus E_8(-1)^{\oplus 2}$.

Let $\delta\in S(-1)$ be a chordal root (as above, an effective choice is  unnecessary). Recall that by Lemma~\ref{lem_saturation}, the saturated lattice $T_\delta$ is isometric to $U\oplus\langle -2\rangle$. An elementary computation shows that the second term is generated by the $(-2)$-class 
$$
e_\chordal\coloneqq \frac{1}{3}(-\vartheta+4\delta+2\rho(\delta)).
$$ 
The orthogonal complement of $\IZ e_\chordal$ in $L$ is isometric to the K3 lattice. We thus obtain an embedding $U\hookrightarrow T_\delta\subset L$ whose image is contained in $(\IZ e_\chordal)^\perp$ and whose orthogonal complement in $(\IZ e_\chordal)^\perp$ is isometric to $S_\delta=U^{\oplus 2}\oplus E_8(-1)^{\oplus 2}$. The isometry $\id_{U}\oplus {\rho_\delta}_{|S_\delta}$ extends uniquely to an isometry $\widetilde{\rho_\delta}$ of~$(\IZ e_\chordal)^\perp$ which is the restriction of $\rho_\delta$ to $(\IZ e_\chordal)^\perp\subset L$.

\begin{lemma}\label{lem_tau_is_rho_delta_chordal} Let $\delta\in S(-1)$ be a chordal root. 
The action  of~$\tau_\chordal$ on~$\HH^2(\Sigma_\chordal,\IZ)$ is conjugated to the isometry  $\widetilde{\rho_\delta}$ of~$(\IZ e_\chordal)^\perp$.
\end{lemma}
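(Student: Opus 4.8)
The plan is to follow closely the proof of Lemma~\ref{lem_tau_is_rho_delta_nodal}, replacing the invariant lattice $U(3)$ by $U$ and using that, for a chordal root $\delta$, the lattice $S_\delta=R_\delta^\perp$ is the \emph{unimodular} lattice $U^{\oplus 2}\oplus E_8(-1)^{\oplus 2}$. First I would introduce, exactly as in Section~\ref{ss_modnonsympl}, the moduli space $\cK_U^{\widetilde{\rho_\delta},\xi}$ parametrizing K3 surfaces endowed with a non-symplectic automorphism of order three whose action on the second cohomology lattice is conjugated to $\widetilde{\rho_\delta}$, together with a period map to the $9$-dimensional complex ball
$$
\Omega_U^{\widetilde{\rho_\delta},\xi}\coloneqq\{\omega\in\IP((S_\delta)_{\xi})\,|\, h_{S_\delta}(\omega,\omega)>0\}=\delta^\perp\cap\Omega_S^{\rho_0,\xi},
$$
the last equality holding because $(S_\delta)_\xi\subset S_\xi$ and a period in $S_\xi$ orthogonal to $\delta$ is automatically orthogonal to $\rho(\delta)$.

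The first step is the existence of a K3 surface realising $\widetilde{\rho_\delta}$. Here the point is that, by Lemma~\ref{lem_chordal_unimodular}, the lattice $S_\delta$ is unimodular, so $(\IZ e_\chordal)^\perp=U\oplus S_\delta$ is an orthogonal direct sum of two unimodular lattices, $\widetilde{\rho_\delta}$ is simply $\id_U\oplus{\rho_\delta}_{|S_\delta}$, and the gluing condition needed to apply the existence criterion of Artebani--Sarti~\cite[Section~3, Proposition~3.1]{AS} is automatic. This produces a K3 surface $\Sigma$ with a non-symplectic automorphism $\tau$ of order three whose action on $\HH^2(\Sigma,\IZ)$ is conjugated to $\widetilde{\rho_\delta}$. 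Since the fixed locus of such an automorphism is determined by the lattice-theoretic invariants of the action --- here the invariant lattice is $U$ --- it consists, as recalled above for $\Sigma_\chordal$, of the zero section of an elliptic fibration preserved by $\tau$ together with one genus~$5$ curve.

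The key --- and, I expect, the only genuinely delicate --- step is to identify $(\Sigma,\tau)$ with the explicit elliptic family $(\Sigma_\chordal,\tau_\chordal)$. For this I would invoke the geometric analysis of Artebani--Sarti~\cite[Section~4, Proposition~4.2]{AS}: the copy of $U$ inside $\NS(\Sigma)$ endows $\Sigma$ with an elliptic fibration with a section over a base $\cR\cong\proj 1$, the genus~$5$ fixed curve double-covers $\cR$ branched along $12$ points, and reconstructing the elliptic model one recognises the general such surface as having exactly $12$ cuspidal fibres, hence as a $\Sigma_\chordal$ with automorphism $\tau_\chordal$. It follows that the general member of $\cK_U^{\widetilde{\rho_\delta},\xi}$ is of this form, so the action of $\tau_\chordal$ on $\HH^2(\Sigma_\chordal,\IZ)$ is conjugated to $\widetilde{\rho_\delta}$. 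The main obstacle is precisely this last identification, but it is already carried out in \loccit, so only a transcription into the present notation is required.
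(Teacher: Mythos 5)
Your proposal follows exactly the route the paper takes: introduce the moduli space $\cK_U^{\widetilde{\rho_\delta},\xi}$ with its ball period domain, get existence of a K3 surface realising $\widetilde{\rho_\delta}$ from Artebani--Sarti (the discriminant condition being automatic here since $S_\delta$ is unimodular by Lemma~\ref{lem_chordal_unimodular}), read off the fixed locus from the lattice invariants, and identify the family with the elliptic surfaces $\Sigma_\chordal$ via \cite[Theorem~3.3, Proposition~4.2]{AS}. This matches the paper's proof, which simply transposes the nodal argument of Lemma~\ref{lem_tau_is_rho_delta_nodal} with the same references.
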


\begin{proof} The proof follows the same lines as the proof of Lemma~\ref{lem_tau_is_rho_delta_nodal}, we only point out the necessary changes. Consider the moduli space
$\cK_U^{\widetilde{\rho_\delta},\xi}$ and the period map 
$$
\cP_U^{\widetilde{\rho_\delta},\xi}\colon \cK_U^{\widetilde{\rho_\delta},\xi}\longrightarrow \Omega_U^{\widetilde{\rho_\delta},\xi}\coloneqq \{\omega\in\IP((S_\delta)_{\xi})\,|\, h_{S_\delta}(\omega,\omega)>0\},
$$
and use \cite[Theorem~3.3, Proposition~4.2]{AS} to relate $\tilde\rho_\delta$ to the action of $\tau_\chordal$ on the K3 surface $\Sigma_\chordal$.
\end{proof}

We consider similarly the automorphism $\tau_\chordal^{[2]}$ induced on $\Sigma_\chordal^{[2]}$, with invariant lattice~$T_\delta$. It defines a point in the moduli space $\cN_{T_\delta}^{\rho_\delta,\xi}$. As above we introduce the moduli space~$\cN_{K(T_\delta)}^{\rho_\delta,\xi}$, which contains the point $(\Sigma_\chordal^{[2]},\tau_\chordal^{[2]})$. 

\begin{proposition}\label{prop_birat_chordal} Let $\delta\in S(-1)$ be a chordal root.
The stable chordal locus~$\Xi^\stable_3$ is birational to the moduli space $\cN_{K(T_\delta)}^{\rho_\delta,\xi}$.
\end{proposition}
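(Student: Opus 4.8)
The plan is to mirror exactly the strategy of the proof of Proposition~\ref{prop_birat_nodal}, replacing the nodal K3 family $(\Sigma_\nodal,\tau_\nodal)$ by the chordal one $(\Sigma_\chordal,\tau_\chordal)$ and the invariant lattice $U(3)$ by $U$. First I would record, as in the nodal case, that since $\rho_0$ is the restriction of $\rho$ to $S(-1)$ and $\rho_\delta$ is the restriction of $\rho$ to $S_\delta\subset S(-1)$, we have an inclusion of eigenspaces $(S_\delta)_\xi\subset S_\xi$; hence if $\omega\in\delta^\perp\cap\Omega_S^{\rho_0,\xi}$ then $\omega$ is automatically orthogonal to $\rho(\delta)$ as well, and so $\omega$ lies in the $9$-dimensional complex ball $\Omega_U^{\widetilde{\rho_\delta},\xi}=\delta^\perp\cap\Omega_S^{\rho_0,\xi}\subset\calH_\chordal$. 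One then identifies the relevant orbit spaces: the period map of Lemma~\ref{lem_tau_is_rho_delta_chordal}, via~\cite[Theorem~9.1]{AST}, gives a bijection $\cP_U^{\widetilde{\rho_\delta},\xi}\colon\cK_U^{\widetilde{\rho_\delta},\xi}\to\frac{\Omega_U^{\widetilde{\rho_\delta},\xi}\setminus\calH_U}{\Gamma_U^{\widetilde{\rho_\delta},\xi}}$ with $\calH_U=\bigcup_{\varepsilon\in S_\delta,\varepsilon^2=-2}\varepsilon^\perp$, and one checks the equality of orbit spaces $\frac{\Omega_U^{\widetilde{\rho_\delta},\xi}}{\Gamma_U^{\widetilde{\rho_\delta},\xi}}=\frac{\delta^\perp\cap\Omega_S^{\rho_0,\xi}}{\Gamma_S^{\rho_0,\xi}}$, using the transitivity of $\Gamma$ on chordal roots from~\cite[Lemma~6.2]{ACT}.

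Next I would invoke the extension of the Allcock--Carlson--Toledo period map over the chordal locus. By~\cite[Theorem~3.2, Chapter~2]{ACT}, the period map $\cP_S^{\rho_0,\xi}$ extends over the stable chordal locus $\Xi_3^\stable$ by sending a general chordal pair $(T,\mu)$ to the period of the associated elliptic K3 surface $\Sigma_\chordal$, i.e.\ $\cP_S^{\rho_0,\xi}(T,\mu)\coloneqq\cP_U^{\widetilde{\rho_\delta},\xi}(\Sigma_\chordal)\in\calH_\chordal$; and the analysis of the limiting Hodge structure of the chordal degeneration in~\cite[Chapters~2--5]{ACT} shows this extension is an isomorphism between $\calC_3^\stable$ and $\frac{\IB^{10}\setminus\calH_\nodal}{\IP\Gamma}$ sending $\Xi_3^\stable$ bijectively onto $\calH_\chordal/\IP\Gamma$. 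On the hyperk\"ahler side, the automorphism $\tau_\chordal^{[2]}$ on $\Sigma_\chordal^{[2]}$ gives a point of $\cN_{K(T_\delta)}^{\rho_\delta,\xi}$ (after checking, as in the nodal case, via~\cite[Lemma~5.0.13]{BCS_ball} that for a very general $\Sigma_\chordal$ one has $\NS(\Sigma_\chordal)\cong U$ so that $(\Sigma_\chordal^{[2]},\tau_\chordal^{[2]})$ is $K(T_\delta)$-general), and~\cite[Theorem~5.0.17]{BCS_ball} furnishes a bijection $\cP_{T_\delta}^{\rho_\delta,\xi}\colon\cN_{K(T_\delta)}^{\rho_\delta,\xi}\to\frac{\Omega_{T_\delta}^{\rho_\delta,\xi}\setminus(\calH_{T_\delta}\cup\calH'_{T_\delta})}{\Gamma_{T_\delta}^{\rho_\delta,\xi}}$, where one identifies $\Omega_{T_\delta}^{\rho_\delta,\xi}=\Omega_U^{\widetilde{\rho_\delta},\xi}$, $\Gamma_{T_\delta}^{\rho_\delta,\xi}=\Gamma_U^{\widetilde{\rho_\delta},\xi}$, and $\calH_{T_\delta}=\calH_U$.

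Finally, since $\cP_U^{\widetilde{\rho_\delta},\xi}$ and $\cP_{T_\delta}^{\rho_\delta,\xi}$ are bijections and the two target orbit spaces $\frac{\Omega_U^{\widetilde{\rho_\delta},\xi}\setminus\calH_U}{\Gamma_U^{\widetilde{\rho_\delta},\xi}}$ and $\frac{\Omega_{T_\delta}^{\rho_\delta,\xi}\setminus(\calH_{T_\delta}\cup\calH'_{T_\delta})}{\Gamma_{T_\delta}^{\rho_\delta,\xi}}$ differ only by the removal of the extra hyperplane arrangement $\calH'_{T_\delta}$ (a countable union of hyperplanes, hence not affecting birationality), they are birational; composing with the ACT isomorphism $\Xi_3^\stable\cong\calH_\chordal/\IP\Gamma\cong\frac{\Omega_U^{\widetilde{\rho_\delta},\xi}}{\Gamma_U^{\widetilde{\rho_\delta},\xi}}$ gives the claimed birationality $\Xi_3^\stable\sim\cN_{K(T_\delta)}^{\rho_\delta,\xi}$. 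The main obstacle I expect is not the lattice bookkeeping (which is parallel to the nodal case and uses Lemma~\ref{lem_saturation}, Proposition~\ref{prop_rho_delta} and Lemma~\ref{lem_tau_is_rho_delta_chordal} already in place) but rather making sure the geometry of the ACT chordal extension genuinely matches the elliptic K3 family $\Sigma_\chordal$ of Artebani--Sarti --- i.e.\ that the $12$-tuple $\mu\subset\cR$ parametrizing the blown-up boundary point really is the cuspidal-fiber data of $\Sigma_\chordal$, with fixed genus-$5$ curve the double cover of $\cR$ branched at $\mu$; this identification, carried out in~\cite[Chapter~2]{ACT} and~\cite[Proposition~4.2]{AS}, is what legitimizes writing $\cP_S^{\rho_0,\xi}(T,\mu)=\cP_U^{\widetilde{\rho_\delta},\xi}(\Sigma_\chordal)$, and it should be cited rather than reproved.
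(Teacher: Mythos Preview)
Your proposal is correct and follows essentially the same route as the paper's proof, which explicitly says it is parallel to Proposition~\ref{prop_birat_nodal} and only records the changes. One small slip: the extended Allcock--Carlson--Toledo period map carries $\calC_3^\stable$ isomorphically onto the \emph{full} ball quotient $\frac{\IB^{10}}{\IP\Gamma}$ (not $\frac{\IB^{10}\setminus\calH_\nodal}{\IP\Gamma}$), since $\calC_3^\stable$ still contains the nodal cubics; this does not affect your argument, which only uses the bijection $\Xi_3^\stable\to\calH_\chordal/\IP\Gamma$. Also, where you invoke the limiting Hodge structure analysis to match the ACT extension with the K3 period $\cP_U^{\widetilde{\rho_\delta},\xi}(\Sigma_\chordal)$, the paper instead appeals to uniqueness of holomorphic extensions (Riemann's extension theorem): since ACT's extension is holomorphic and agrees on a dense open set with the map defined via the elliptic K3 period, the two must coincide generically---this sidesteps the concern you flag in your final paragraph.
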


\begin{proof} 
The proof is similar to that of Proposition~\ref{prop_birat_nodal}, we only indicate the needed changes.
By~\cite[Theorem~9.1]{AST}, the period map considered in the proof of Lemma~\ref{lem_tau_is_rho_delta_chordal} induces a bijection
$$
\cP_U^{\widetilde{\rho_\delta},\xi}\colon \cK_U^{\widetilde{\rho_\delta},\xi}\longrightarrow \frac{\Omega_U^{\widetilde{\rho_\delta},\xi}\setminus\calH_U}{\Gamma_U^{\widetilde{\rho_\delta},\xi}}
$$ 
where $\calH_U\coloneqq \bigcup\limits_{\substack{\varepsilon\in S_\delta\\\varepsilon^2=-2}}\varepsilon^\perp$. The group $\Gamma_U^{\widetilde{\rho_\delta},\xi}$ is the set of isometries of the K3 lattice~$(\IZ e_\chordal)^\perp$ which commute with $\widetilde{\rho_\delta}$. We thus have an equality of orbit spaces 
$$
\frac{\Omega_{U}^{\widetilde{\rho_\delta},\xi}}{\Gamma_{U}^{\widetilde{\rho_\delta},\xi}}=\frac{\delta^\perp\cap\Omega_{S}^{\rho_0,\xi}}{\Gamma_S^{\rho_0,\xi}}.
$$

Allcock--Carlson--Toledo~\cite[Theorem~3.7, Theorem~6.1]{ACT} extend the period map $\cP_S^{\rho_0,\xi}\colon \cC^\stable_3\to
\frac{\IB^{10}\setminus\calH_\chordal}{\IP\Gamma}$ 
to an isomorphism $\cP_S^{\rho_0,\xi}\colon \calC^\stable_3\to
\frac{\IB^{10}}{\IP\Gamma}$ mapping bijectively the stable chordal locus $\Xi^\stable_3$ to the divisor $\calH_\chordal/\IP\Gamma$. This extension is unique by Riemann extension theorem, so we may reinterpret generically their construction by using the period of elliptic K3 surfaces: for a general point $(T,\mu)\in\Xi^\stable_3$ with associated elliptic K3 surface $\Sigma_\chordal$, we have:
$$
\cP_S^{\rho_0,\xi}((T,\mu))\coloneqq \cP_U^{\widetilde{\rho_\delta},\xi}(\Sigma_\chordal)\in\calH_\chordal.
$$

Now consider the isomorphism given by the period map
$$
\cP_{T_\delta}^{\rho_\delta,\xi}\colon\cN_{K(T_\delta)}^{\rho_\delta,\xi}\longrightarrow \frac{\Omega_{T_\delta}^{\rho_\delta,\xi}\setminus\left(\calH_{T_\delta}\cup\calH'_{T_\delta}\right)}{\Gamma_{T_\delta}^{\rho_\delta,\xi}}.
$$
As above, here
$\Omega_{T_\delta}^{\rho_\delta,\xi}=\Omega_U^{\widetilde{\rho_\delta},\xi}$
and $\Gamma_{T_\delta}^{\rho_\delta,\xi}=\Gamma_{U}^{\widetilde{\rho_\delta},\xi}$.
We thus get birational isomorphisms
$$
\Xi^\stable_3 \sim \cK_{U}^{\widetilde{\rho_\delta},\xi}\sim  \cN_{K(T_\delta)}^{\rho_\delta,\xi}\sim\cR^{12}/\kS_{12}.
$$
\end{proof}

\begin{remark}\label{rem_relate_chordal} As in Remark~\ref{rem_relate_nodal}, if we take a general period point $\omega\in\calH_\chordal$ then the variety $\Sigma_\chordal^{[2]}$ lives in the fiber $\cP_{\langle 6\rangle}^{-1}(\omega)$ and the
birational map $\beta=\tau_\chordal^{[2]}$ is everywhere defined. 
\end{remark}

%%%%%%%%%%%%%%%%%%%%%%%%%%%%%%%%%%%%%%%%%%%%%
%%%%%%%%%%%%%%%%%%%%%%%%%%%%%%%%%%%%%%%%%%%%%

\section{Pfaffian cubics with triple covering}\label{s_pfaff}

Recall that $\cP_S\colon\cC^\smooth_4\to \frac{\Omega_S^\circ}{\Gamma_S}$ is the period map for cubic fourfolds (see Section~\ref{ss_compare}), 
$\Lambda=\langle 1\rangle^{\oplus 21}\oplus \langle -1\rangle^{\oplus 2}$, $\kappa\in\Lambda$ is a polarization class of square $3$ 
such that $S=(\IZ\kappa)^\perp$. For any rank two lattice $M$ primitively embedded in $\Lambda$ and containing $\kappa$, we define a hyperplane 
$$
\cH_M\coloneqq\{\omega\in\Omega_S\,|\,\omega\perp M\}.
$$ 
By Hassett~\cite[Propositions~3.2.2\&3.2.4]{Hassett} (see also Laza~\cite[\S 2]{Laza_period}), for any $d$ the hyperplanes $\cH_M$ where $\det(M)=d$ form a single $\Gamma_S$-orbit. We denote by $\cH_d\subset \frac{\Omega_S^\circ}{\Gamma_S}$ the corresponding irreducible hypersurface, which is nonempty if and only if $d\equiv 0, 2 \mod 6$ and $d>6$ \cite[Theorem~1.0.1]{Hassett}. By Hassett~\cite{Hassett} and Laza~\cite[Theorem~1.1]{Laza_period}, the image of the period map $\cP_S$ is precisely $\frac{\Omega_S^\circ}{\Gamma_S}\setminus(\cH_2\cup \cH_6)$. Of particular interest in this section are the divisor $\cH_8$ parametrizing cubic fourfolds containing a plane, and the divisor $\Pfaff$ of Pfaffian cubic fourfolds, whose closure is $\cH_{14}$. 
Proposition~\ref{prop_pfaff} below shows that  $\Pfaff\setminus\cH_8$ contains a cyclic cubic fourfold. 

Let $V$ be a $6$-dimensional rational vector space and {$(Z\subset W)$} be general subspaces in the flag variety $\Flag(5,6;\wedge^2 V^\ast)$. 
The Pfaffian locus $\Pfaff\subset\IP\left(\wedge^2V^\ast\right)$ is the hypersurface of degenerate skew-symmetric $2$-forms (by convention $\IP(-)$ denotes the projective space of lines).
 To the pair $(Z\subset W)$ we associate the cubic threefold $C\coloneqq \Pfaff\cap\IP(Z)$ and the cubic fourfold $Y\coloneqq \Pfaff\cap\IP(W)$. Using the Pl\"ucker embedding 
we consider the Grassmaniann $\Grass(2,V)$ as the subvariety of decomposable tensors in $\IP(\wedge^2 V)$. The orthogonal flag $W^\circ\subset Z^\circ$ in $\Flag(9,10;\wedge^2 V)$ defines 
a Fano threefold $X\coloneqq \Grass(2,V)\cap\IP(Z^\circ)$ of index one and genus $8$ and a K3 surface $\Sigma\coloneqq \Grass(2,V)\cap\IP(W^\circ)$ of genus $14$ as anticanonical section of $X$ 
(see Iliev--Manivel~\cite{IM}). We  call the pair $(\Sigma,X)$ the \emph{dual} of the pair $(Y,C)$. By Beauville--Donagi~\cite[Proposition~5]{BD}, there exists a birational map between the Fano 
variety of lines $\Fano Y$ and the Hilbert square $\Sigma^{[2]}$, which is an isomorphism if $Y$ contains no plane and if $\Sigma$ contains no line.

\begin{proposition}\label{prop_pfaff}
There exists a smooth Pfaffian cubic fourfold $Y$, defined over~$\IQ$, containing no plane, which is the triple cover of $\proj 4$ along a smooth Pfaffian cubic threefold $C$, and whose dual K3 surface $\Sigma$ is smooth and contains no line.
\end{proposition}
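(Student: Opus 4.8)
\emph{Reformulation.}
Being Pfaffian means $[Y]$ lies in $\Pfaff$, a dense open subset of the Hassett divisor $\cH_{14}$, while being a cyclic triple cover of $\proj 4$ means $[Y]$ lies in the image of the covering morphism $\cov\colon\cC^\smooth_3\to\cC^\smooth_4$ of Section~\ref{ss_compare}. Moreover, if $Y=\Pf\cap\IP(W)$ then every hyperplane section $Y\cap H=\Pf\cap\IP(W')$ (with $\dim W'=5$) is again a Pfaffian cubic threefold, so for any cyclic Pfaffian $Y$ the branch threefold $C=Y\cap\{x_5=0\}$ is automatically Pfaffian, and inspecting the partial derivatives of $f+x_5^3$ shows that $C$ is smooth if and only if $Y$ is. Hence it suffices to produce a point of $\cC^\smooth_3\cap\cH_{14}$, defined over $\IQ$, avoiding $\cH_8$, the discriminant, and the two closed loci where the dual K3 surface $\Sigma$ is singular or contains a line; by Beauville--Donagi, once $Y$ has no plane and $\Sigma$ has no line the birational map between $\Fano Y$ and $\Sigma^{[2]}$ is an isomorphism.

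\emph{Existence.}
Through the isomorphism $\cC^\smooth_3\cong\BCSmod$ of Theorem~\ref{th_isom_mod_spaces} and the commutative square of Section~\ref{ss_compare}, the locus $\cC^\smooth_3\cap\cH_{14}$ pulls back to a Heegner-type divisor of the ball quotient $\IB^{10}/\IP\Gamma$. By Hassett's criterion $[Y]\in\cH_{14}$ holds exactly when $\HH^{2,2}(Y)\cap\HH^4(Y,\IZ)$ contains a rank-two sublattice of discriminant $14$ through the polarisation; for a cyclic $Y$ this translates into the period point being orthogonal, inside $S_{\xi}$, to a primitive class $v\in S=U^{\oplus2}\oplus E_8^{\oplus2}\oplus A_2$ which is admissible for the $\cE$-module structure of $S$, so that $v^\perp\cap\IB^{10}$ is a sub-ball $\IB^9$. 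The plan is to exhibit such a $v$ by an explicit computation in $S$, to check that the sub-ball it cuts out is non-empty, and to verify that its generic point lies on none of the other relevant sub-balls: the nodal and chordal root hyperplanes forming $\calH$, the $\cH_8$-hyperplanes, and the hyperplanes along which $\Sigma$ degenerates or acquires a line. Since all of these are cut out by fixed classes, it is enough to see that $v$ is not $\cE$-proportional to any of them and that $\IB^9$ is not contained in the corresponding arrangement.

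\emph{Rationality, and the main obstacle.}
To descend to $\IQ$ one then produces the example explicitly, say as a rational Pfaffian cubic fourfold $Y=\Pf\cap\IP(W)$ with $W\subset\wedge^2 V^\ast$ rational, and checks directly that $Y$ is smooth, is projectively a cyclic cubic $\{f(x_0,\dots,x_4)+x_5^3=0\}$, contains no plane, and that its dual $\Sigma=\Grass(2,V)\cap\IP(W^\perp)$ is smooth and contains no line --- the last being the finite condition that no flag $\ell\subset V_3\subset V$ satisfies $\ell\wedge V_3\subset W^\perp$. Two genuine difficulties should be expected. The first is the lattice computation above: pinning down the $\cE$-admissible class realising $\cH_{14}$ and proving that the resulting $9$-dimensional family is not swallowed by the union of the bad loci. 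The second is the construction of the rational model, where one must be aware that a smooth cyclic Pfaffian cubic cannot admit a $\IZ/3\IZ$-equivariant Pfaffian representation: imposing equivariance pins the action on $V$ to the splitting $V=V_1\oplus V_\xi\oplus V_{\xi^2}$ with all three summands of dimension $2$, and then the construction yields only the singular cubic $\{x_0(x_1x_4-x_2x_3)+x_5^3=0\}$. Thus the Pfaffian representation of the sought $Y$ must be found without symmetry, and the proof concludes by running the finitely many verifications on one such well-chosen rational $Y$.
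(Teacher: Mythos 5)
Your opening reductions are sound: a hyperplane section of a Pfaffian cubic fourfold is a Pfaffian cubic threefold, and the smoothness of $Y=\{f+x_5^3=0\}$ is equivalent to that of $C=\{f=0\}$. But the proposal has a genuine gap, and it sits exactly where the paper does its real work. You defer the whole construction to ``one such well-chosen rational $Y$'' on which the finitely many verifications are to be run, without any mechanism for producing it. The paper's proof is precisely that mechanism: fixing $\varphi_0\in\wedge^2V^\ast$ of maximal rank with $\Pf(\varphi_0)=1$, one expands
$$
\Pf(x_0\varphi_0+\varphi)=x_0^3+x_0^2\,h(\varphi)+x_0\,q(\varphi)+\Pf(\varphi),
$$
so that $Y_Z=\Pfaff\cap\IP(\IQ\varphi_0\oplus Z)$ is a \emph{cyclic} cubic over $\IP(Z)$ exactly when the $5$-plane $Z$ lies in $\ker(h)$ and is totally isotropic for $q$; the computation that $q|_{\ker(h)}$ has Witt index $6$ over $\IQ$ then guarantees such a rational $Z$ exists, and a general choice gives smoothness (exhibited explicitly in the appendix). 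This also disposes of your ``main obstacle'': no $\IZ/3\IZ$-equivariant structure on $V$ is needed, because the covering automorphism comes from the vanishing of the cross-terms $h,q$ on $Z$, not from a symmetry of the Pfaffian representation.

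The abstract existence argument you propose in its place is also not airtight. Membership of the period point in $\cH_{14}$ does not by itself make $Y$ Pfaffian: $\Pfaff$ is only dense in $\cH_{14}$, and a priori the $9$-dimensional sub-ball cut out on the cyclic locus could land in $\cH_{14}\setminus\Pfaff$, which is not described by a Heegner-type arrangement you can avoid by genericity of $v$. Even granting that, the period-domain argument produces at best a complex (or $\overline{\IQ}$) example, so it cannot deliver the $\IQ$-rationality in the statement; you would still need the explicit model, i.e.\ the isotropic-subspace construction above. Finally, note that ``containing no plane'' is not checked in the paper by brute force on $Y$: one first shows a plane $P\subset Y_Z$ must meet $\IP(Z)$ in a line along which $P\cap C_Z$ is a triple line, and then verifies (on local charts of $\Fano{C_Z}$, by machine) that $C_Z$ has no triple lines — a reduction worth recording if you want the verification to be feasible.
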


\begin{proof}
Let $V$ be a $6$-dimensional rational vector space. We fix an isomorphism $\wedge^6V^\ast\cong\IQ$. The \emph{Pfaffian} of a skew-symmetric $2$-form $\varphi\in\wedge^2V^\ast$ is ${\Pf(\varphi)=\frac{1}{6}\varphi^3}$.
We fix  a skew-symmetric $2$-form of maximal rank $\varphi_0\in\wedge^2 V^\ast$. Without loss of generality, we can assume that $\Pf(\varphi_0)=1$.
We define on $\wedge^2 V^\ast$ a linear form~$h$ and a quadratic form~$q$ by the decomposition
$$
\Pf(x_0\varphi_0+\varphi)=x_0^3+x_0^2 h(\varphi)+x_0 q(\varphi)+\Pf(\varphi)\quad\forall \varphi\in\wedge^2 V^\ast,
$$
where $h(\varphi)=\frac{1}{2}\varphi_0^2\wedge\varphi$ and $q(\varphi)=\frac{1}{2}\varphi_0\wedge \varphi^2$.
Since $q(\varphi_0)=3$, the hyperplane $H\coloneqq \ker(h)$ is regular for $q$. 
A computation shows that the rational quadratic form~$q$ has Witt index $7$ and that its restriction 
$q_{H}$ to $H$ has Witt index $6$. Let $Z$ be a general $5$-dimensional totally isotropic vector subspace of $H$ and $W\coloneqq \IC\varphi_0\oplus Z$. The cubic fourfold
$$
Y_Z\coloneqq\{ [\varphi]\in\IP(W)\,|\,\Pf(\varphi)=0\}
$$
is by construction a ramified cyclic triple covering of $\IP(Z)$ branched along the cubic threefold 
$$
C_Z\coloneqq\{[\varphi]\in\IP(Z)\,|\,\Pf(\varphi)=0\}.
$$ 
We show that the isotropic space~$Z$ can be
choosen in such a way that the general elements~$Y_Z$ and~$C_Z$ of this family are smooth by exhibiting an example in Appendix~\ref{app_example}. 

Assume that $Y_Z$ contains a plane
$P\subset\IP(W)$. This plane cannot be contained in $\IP(Z)$ otherwise it would be contained in $C_Z$, which would be singular. 
Thus $P$~meets~$\IP(Z)$ along a line $\Span(\varphi_2,\varphi_3)$ with $\varphi_2,\varphi_3\in Z$. Take $\varphi_1\in W\setminus Z$ such that $P=\Span(\varphi_1,\varphi_2,\varphi_3)$ and 
denote by $\varphi'_1\in\IP(Z)$ the projection of $\varphi_1$ on~$\IP(Z)$ from the point~$\varphi_0$. We may assume that $\varphi_1=\varphi_0+\varphi'_1$. For any $x_1,x_2,x_3\in\IC$ we have
\begin{align*}
0&=\Pf(x_1\varphi_1+x_2\varphi_2+x_3\varphi_3)=\Pf(x_1\varphi_0+(x_1\varphi'_1+x_2\varphi_2+x_3\varphi_3))\\
&=x_1^3+\Pf(x_1\varphi'_1+x_2\varphi_2+x_3\varphi_3).
\end{align*}
In particular $\Pf(\varphi'_1)\neq 0$ and $\Pf(x_2\varphi_2+x_3\varphi_3)=0$ for any $x_2,x_3$. This shows that $\varphi'_1\notin C_Z$ and that $P$ meets $C_Z$ along the triple line $\Span(\varphi_2,\varphi_3)$. 

To prove that $C_Z$ has no triple line, we make a local analysis of the Fano surface of lines $\Fano {C_Z}$, following~\cite[\S 1]{Murre}. 
Denote by $p_{i,j}$, $0\leq i<j\leq 4$ the Pl\"ucker coordinates of the Grassmannian of lines $\Grass(1,\IP(Z))\subset\proj 9$ and denote by $x_0,\ldots,x_4$ the coordinates 
on $\IP(Z)$. On the affine chart $p_{0,1}=1$, the variety $\Grass(1,\IP(Z))$ is isomorphic to $\IC^6$ and a point $\ell\in \Grass(1,\IP(Z))$ in this chart corresponds to a line generated by 
two vectors $\ell_0=(1,0,-p_{1,2},-p_{1,3},-p_{1,4})$ and $\ell_1=(0,1,p_{0,2},p_{0,3},p_{0,4})$. Any plane $P\supset\ell$ cuts the plane $x_0=x_1=0$ at a unique point 
$p=(0,0,\alpha_2,\alpha_3,\alpha_4)$ and $P=\Span(\ell,p)$. The plane cubic $P\cap C_Z$ has equation
$F(u\ell_0+v\ell_1+tp)=0$ where $(u:v:t)$ are the projective coordinates of the plane $P$. Expanding in $t$ we get:
$$
F(u\ell_0+v\ell_1)+t\sum_{i=2}^4\frac{\partial F}{\partial x_i}(u\ell_0+v\ell_1)\alpha_i+t^2\sum_{2\leq i,j\leq 4}\frac{\partial^2F}{\partial x_i\partial x_j}(u\ell_0+v\ell_1)\alpha_i\alpha_j+t^3F(p)=0.
$$
The line $\ell$ of equation $t=0$ in $P$ is contained in $C_Z$ if and only if ${F(u\ell_0+v\ell_1)=0}$ for every $u,v$. Write $\frac{\partial F}{\partial x_i}=u^2\phi_i^{2,0}+uv\phi^{1,1}_i+v^2\phi^{0,2}_i$ 
where $\phi^{j,k}_i$ are functions of the local Pl\"ucker coordinates. The plane $P$ is tangent at $C_Z$ along $\ell$ (\ie $P\cap C_Z=2\ell+\ell'$) if and only if $\det\left(\phi_i^{j,k}\right)_{i,j,k}=0$. 
As explained in~\cite[Corollary~1,9]{Murre}, this defines a reducible curve in $\Fano {C_Z}$ with smooth irreducible components (which may intersect with each other). Finally, 
$\ell$ is a triple line (\ie $P\cap C_Z=3\ell$) if and only if the residual plane cubic equation is a multiple of $t^3$. The locus of triple lines is thus easy to 
compute on each local chart of $\Grass(1,\IP(Z))$ and with the help of a computer program we find that $C_Z$ contains none.

To conclude the proof, a computer program shows that the dual K3 surface~$\Sigma$ contains no line: to reduce the computation we use the embedding of $\Sigma$ in $\IP(W^\circ)$ and we compute the equations of its Fano variety of lines $\Fano \Sigma$ in $\IP(\wedge^2(W^\circ))$ to check that it is empty.
\end{proof}

The proof of the global Torelli theorem of Huybrechts--Markman--Verbitsky has made possible to show the existence of hyperk\"ahler manifolds admitting an automorphism 
with a given action on cohomology (see for instance~\cite[Theorem~5.5]{BCS_class}), but it is still hard to construct automorphisms on a given hyperk\"ahler manifold
without deforming it (see \cite{BCNS} for related results on involutions of Hilbert squares of general K3 surfaces). In particular, it is very challenging to show the existence 
of a non-natural automorphism on the Hilbert square of a K3 surface (\ie an automorphism which does not come from an automorphism of the underlying K3 surface, see~\cite{Boissiere}). 
Until now, the only such geometric realization was the involution found by Beauville on the Hilbert square of a general quartic in $\proj 3$. Proposition~\ref{prop_pfaff} gives a 
new geometric example with an order three automorphism.

\begin{corollary}\label{cor_aut}
There exists a smooth complex K3 surface whose Hilbert square admits a non-natural non-symplectic automorphism of order three, with fixed locus isomorphic to the Fano surface of lines on a cubic threefold,
and with invariant lattice isometric to~$\langle 6 \rangle$.
\end{corollary}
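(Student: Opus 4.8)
The plan is to realize the automorphism constructed in Section~\ref{ss_fano} on an actual Hilbert square, using the Pfaffian model provided by Proposition~\ref{prop_pfaff}, and then to detect its non-naturality by a short lattice computation. Concretely, I would start from the smooth Pfaffian cubic fourfold~$Y$ and the smooth Pfaffian cubic threefold~$C$ furnished by Proposition~\ref{prop_pfaff}: $Y$~is the cyclic triple covering of~$\IP^4$ branched along~$C$, it contains no plane, and its dual K3 surface~$\Sigma$ is smooth and contains no line. As recalled in Section~\ref{ss_fano}, the covering automorphism of~$Y$ induces on the Fano variety of lines~$\Fano Y$ a non-symplectic automorphism~$\sigma$ of order three which acts by~$\xi$ on~$\HH^{2,0}(\Fano Y)$, whose fixed locus is the Fano surface~$\Fano C$ of lines on~$C$, and whose invariant lattice is $\HH^2(\Fano Y,\IZ)^{\sigma}\cong\langle 6\rangle$ by the classification of~\cite[Example~6.4]{BCS_class}.

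Next I would invoke Beauville--Donagi~\cite[Proposition~5]{BD}: since~$Y$ contains no plane and~$\Sigma$ contains no line, the natural birational map between~$\Fano Y$ and~$\Sigma^{[2]}$ is in fact an isomorphism $f\colon\Fano Y\to\Sigma^{[2]}$. Transporting the automorphism, $\sigma'\coloneqq f\circ\sigma\circ f^{-1}$ is an automorphism of order three of~$\Sigma^{[2]}$. It is non-symplectic, because~$f^{\ast}$ identifies~$\HH^{2,0}(\Sigma^{[2]})$ with~$\HH^{2,0}(\Fano Y)$ equivariantly, so~$\sigma'$ acts by~$\xi$ on~$\HH^{2,0}(\Sigma^{[2]})$; its fixed locus is~$f(\Fano C)\cong\Fano C$, the Fano surface of lines on the cubic threefold~$C$; and~$f^{\ast}$ intertwines~$(\sigma')^{\ast}$ with~$\sigma^{\ast}$ on second integral cohomology, hence it restricts to an isometry $\HH^2(\Sigma^{[2]},\IZ)^{\sigma'}\cong\HH^2(\Fano Y,\IZ)^{\sigma}\cong\langle 6\rangle$.

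It then remains to argue that~$\sigma'$ is not a natural automorphism, \ie is not induced by an automorphism of~$\Sigma$ (see~\cite{Boissiere}). The exceptional divisor of the Hilbert--Chow morphism $\Sigma^{[2]}\to\Sigma^{(2)}$ is canonically attached to~$\Sigma^{[2]}$, so it is preserved by every natural automorphism; consequently the invariant lattice of any natural automorphism of~$\Sigma^{[2]}$ contains the primitive class~$\delta$ with~$2\delta$ equal to the class of this divisor, and~$\IZ\delta$ is negative definite since~$\delta^2=-2$. But $\HH^2(\Sigma^{[2]},\IZ)^{\sigma'}\cong\langle 6\rangle$ is positive definite of rank one and contains no class of negative square. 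Hence~$\sigma'$ cannot be natural, which gives the corollary.

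The substantial work is concentrated entirely in Proposition~\ref{prop_pfaff} — producing an explicit smooth Pfaffian cyclic cubic fourfold whose dual K3 surface is smooth and contains no line, which relies on the computer-assisted verifications and on the example of Appendix~\ref{app_example}. Granting that input, the remaining steps are formal; the one point requiring care, which I expect to be the only delicate bookkeeping, is that the identification $\HH^2(\Sigma^{[2]},\IZ)^{\sigma'}\cong\langle 6\rangle$ is obtained by pulling back through~$f^{\ast}$ the deformation-invariant computation of Section~\ref{ss_fano}, so that the positive-definiteness obstruction to naturality genuinely applies to~$\sigma'$ itself and not merely to an abstract model.
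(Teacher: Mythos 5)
Your proposal is correct and follows essentially the same route as the paper: transport the covering automorphism from $\Fano Y$ to $\Sigma^{[2]}$ via the Beauville--Donagi isomorphism (valid because $Y$ contains no plane and $\Sigma$ no line), then read off the fixed locus and invariant lattice. The only difference is that you spell out the non-naturality argument (a natural automorphism would fix the half-exceptional class $\delta$ with $\delta^2=-2$, incompatible with a positive definite rank-one invariant lattice), which the paper delegates to a citation of~\cite[Section~6.1]{BCS_class}; your unpacking is accurate.
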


\begin{proof}
The cyclic Pfaffian cubic fourfold $Y$ constructed in Proposition~\ref{prop_pfaff} has a non-symplectic automorphism $\sigma$ of order three. Since $Y$ contains no plane and since its dual K3 surface $\Sigma$
contains no line, the isomorphism $\Fano Y\cong \Sigma^{[2]}$ produces a non-symplectic automorphism of order three on $\Sigma^{[2]}$ which we still denote by~$\sigma$. Since the invariant lattice $\HH^2(S^{[2]},\IZ)^\sigma$ 
is isometric to $\langle 6\rangle$, the automorphism $\sigma$ is certainly non-natural (see~\cite[Section~6.1]{BCS_class}).
\end{proof}

Looking at the geometric description of the isomorphism $\Fano Y\cong \Sigma^{[2]}$ in~\cite[proof of Proposition~5]{BD},
we get a geometric description of the action of $\sigma$ on $\Sigma^{[2]}$ as follows. Let $\zeta=\{P,Q\}\in \Sigma^{[2]}$ with $P,Q\in \Grass(2,V)\cap\IP(W^\circ)$ that we assume to be distinct for simplicity.
Consider the $4$-plane $K=P+Q$, the $5$-dimensional linear space $M=\Span(\Grass(2,K))\subset \IP(\wedge^2 V)$ and the line 
$$
\ell_{P,Q}\coloneqq\{\varphi\in\IP(W)\,|\, \varphi_{|K}=0\}\in\Fano Y.
$$
This line is a Kronecker pencil of skew-symmetric $2$-forms whose unique bilagrangian is $K$. The points $P,Q$ can be recovered from $K$ as follows. First observe that ${\ell_{P,Q}=M^\circ\cap\IP(W)}$, then by 
dimension counting $M\cap\IP(W^\circ)$ is a line, which cuts the quadric $\Grass(2,K)$ exactly in the points $P,Q$. If the line $\ell_{P,Q}$ is contained in~$\IP(Z)$, it is invariant by $\sigma$,
so $\zeta$ is invariant by $\sigma$. Otherwise, this line cuts $\IP(Z)$ in a point $\varphi_{P,Q}$. The plane $\Span(\varphi_0,\ell_{P,Q})$ cuts $Y$ along a cubic curve with $\varphi_{P,Q}$ as a triple point,
it is thus the union of three lines passing through $\varphi_{P,Q}$, which are precisely $\ell_{P,Q}, \sigma(\ell_{P,Q})$ and $\sigma^2(\ell_{P,Q})$. We denote the corresponding bilagrangians by $K$, $\sigma K$ and $\sigma^2 K$, which
are three $4$-planes of $V$ meeting along the $2$-plane $\ker(\varphi_{P,Q})$. As explained above, the $4$-plane $\sigma K$ characterizes a length two subscheme of $\Sigma$ which is the image of~$\zeta$ by~$\sigma$.

%%%%%%%%%%%%%%%%%%%%%%%%%%%%%%%%%%%%%%%%%%%%%
%%%%%%%%%%%%%%%%%%%%%%%%%%%%%%%%%%%%%%%%%%%%%

\appendix

\section{A smooth Pfaffian cubic fourfold with triple covering}
\label{app_example}

Let $M$ be the skew-symmetric matrix of linear forms on $\IQ^6$:
$$
\begin{pmatrix}
0 & u_1 & u_2 & u_3-x_5 & u_4 & u_5\\
-u_1 & 0 & u_6 & u_7 & u_8-x_5 & u_9\\
-u_2 & -u_6 & 0 & u_{10} & u_{11} & u_{12}-x_5\\
-u_3+x_5 & -u_7  & -u_{10} & 0 & u_{13} & u_{14}\\
-u_4 & -u_8+x_5 & -u_{11} & -u_{13} & 0 & u_{15}\\
-u_5 & -u_9 & -u_{12}+x_5 & -u_{14} & -u_{15} & 0
\end{pmatrix}
$$
with:
\begin{align*}
u_1&= 2x_0+3x_1+2x_2+3x_3+2x_4,
& u_2&=x_0+x_2+x_3,\\
u_3&=2x_0+3x_1+2x_2+2x_3+x_4,
&u_4&=3x_0+x_1+2x_2+x_3+x_4,\\
u_5&=2x_0+4x_1+3x_2+3x_3+x_4,
&u_6&=3x_0+4x_1+4x_2+4x_3+x_4,\\
u_7&=x_1+x_2+x_3,
&u_8&=2x_0+4x_1+3x_2+3x_3+x_4,\\
u_9&=2x_0+2x_1+x_2+2x_3+2x_4,
&u_{10}&=2x_1+x_2+2x_3+x_4,\\
u_{11}&=x_0-x_1+x_2-x_3-x_4,
&u_{12}&=-4x_0-7x_1-5x_2-5x_3-2x_4,\\
u_{13}&=-4x_0-4x_1-6x_2-4x_3,
&u_{14}&=6x_0+11x_1+7x_2+8x_3+4x_4,\\
u_{15}&=-4x_0-8x_1-5x_2-5x_3-2x_4. &\\
\end{align*}
The Pfaffian of $A$ defines a cubic fourfold $Y\subset\proj 5$ of equation
\begin{align*}
x_5^3&+\left(-42x_0^3 - 179x_0^2x_1 - 187x_0x_1^2 - 32x_1^3 - 161x_0^2x_2 -391x_0x_1x_2 \right.\\
&- 155x_1^2x_2 - 193x_0x_2^2 - 196x_1x_2^2 - 74x_2^3 
- 138x_0^2x_3 - 280x_0x_1x_3\\
& - 41x_1^2x_3 - 291x_0x_2x_3 
- 205x_1x_2x_3 
- 142x_2^2x_3 - 99x_0x_3^2 - x_1x_3^2\\
& - 62x_2x_3^2 + 9x_3^3 - 46x_0^2x_4 - 104x_0x_1x_4 - 27x_1^2x_4
 - 91x_0x_2x_4 \\
 &- 71x_1x_2x_4 - 36x_2^2x_4 - 80x_0x_3x_4
 - 20x_1x_3x_4 - 40x_2x_3x_4 + 4x_3^2x_4\\
 &\left. - 24x_0x_4^2 - 17x_1x_4^2 - 19x_2x_4^2
  - 11x_3x_4^2 - 3x_4^3 \right)=0
\end{align*}
which is the ramified triple covering of $\proj 4$ branched over a cubic threefold $C$.
It is easy to check that both $C$ and $Y$ are nonsingular. Following the method explained in the proof of Proposition~\ref{prop_pfaff}, we show that the dual K3 surface contains no line.

\bibliographystyle{plain}
\bibliography{biblio}

\begin{thebibliography}{10}

\bibitem{AN}
V.~Alexeev and V.~V. Nikulin.
\newblock {\em Del {P}ezzo and {$K3$} surfaces}, volume~15 of {\em MSJ
  Memoirs}.
\newblock Mathematical Society of Japan, Tokyo, 2006.

\bibitem{Allcock}
D.~Allcock.
\newblock The moduli space of cubic threefolds.
\newblock {\em J. Algebraic Geom.}, 12(2):201--223, 2003.

\bibitem{ACT_surf}
D~Allcock, J.~A. Carlson, and D.~Toledo.
\newblock The complex hyperbolic geometry of the moduli space of cubic
  surfaces.
\newblock {\em J. Algebraic Geom.}, 11(4):659--724, 2002.

\bibitem{ACT}
D.~Allcock, J.~A. Carlson, and D.~Toledo.
\newblock The moduli space of cubic threefolds as a ball quotient.
\newblock {\em Mem. Amer. Math. Soc.}, 209(985):xii+70, 2011.

\bibitem{AV}
E.~Amerik and M.~Verbitsky.
\newblock Rational {C}urves on {H}yperk\"ahler {M}anifolds.
\newblock {\em International Mathematics Research Notices},
  2015(23):13009--13045, 2015.

\bibitem{AS}
M.~Artebani and A.~Sarti.
\newblock Non-symplectic automorphisms of order 3 on {$K3$} surfaces.
\newblock {\em Math. Ann.}, 342(4):903--921, 2008.

\bibitem{AST}
M.~Artebani, A.~Sarti, and S.~Taki.
\newblock {$K3$} surfaces with non-symplectic automorphisms of prime order.
\newblock {\em Math. Z.}, 268(1-2):507--533, 2011.
\newblock With an appendix by Shigeyuki Kond\=o.

\bibitem{BHT}
A.~Bayer, B.~Hassett, and Y.~Tschinkel.
\newblock Mori cones of holomorphic symplectic varieties of {K}3 type.
\newblock {\em Ann. Sci. \'Ec. Norm. Sup\'er. (4)}, 48(4):941--950, 2015.

\bibitem{B_modcub}
A.~Beauville.
\newblock Moduli of cubic surfaces and {H}odge theory (after {A}llcock,
  {C}arlson, {T}oledo).
\newblock In {\em G\'eom\'etries \`a courbure n\'egative ou nulle, groupes
  discrets et rigidit\'es}, volume~18 of {\em S\'emin. Congr.}, pages 445--466.

\bibitem{B_remarks}
A.~Beauville.
\newblock Some remarks on {K}\"ahler manifolds with {$c_{1}=0$}.
\newblock In {\em Classification of algebraic and analytic manifolds ({K}atata,
  1982)}, volume~39 of {\em Progr. Math.}, pages 1--26.

\bibitem{B_monodromy}
A.~Beauville.
\newblock Le groupe de monodromie des familles universelles d'hypersurfaces et
  d'intersections compl\`etes.
\newblock In {\em Complex analysis and algebraic geometry ({G}\"ottingen,
  1985)}, volume 1194 of {\em Lecture Notes in Math.}, pages 8--18. Springer,
  Berlin, 1986.

\bibitem{BD}
A.~Beauville and R.~Donagi.
\newblock La vari\'et\'e des droites d'une hypersurface cubique de dimension
  {$4$}.
\newblock {\em C. R. Acad. Sci. Paris S\'er. I Math.}, 301(14):703--706, 1985.

\bibitem{Boissiere}
S.~Boissi\`ere.
\newblock Automorphismes naturels de l'espace de {D}ouady de points sur une
  surface.
\newblock {\em Canad. J. Math.}, 64(1):3--23, 2012.

\bibitem{BCS_ball}
S.~Boissi\`ere, C.~Camere, and A.~Sarti.
\newblock Complex ball quotients from manifolds of ${K3}^{[n]}$-type.
\newblock \texttt{ArXiv:1512.02067}.

\bibitem{BCS_class}
S.~Boissi\`ere, C.~Camere, and A.~Sarti.
\newblock Classification of automorphisms on a deformation family of
  hyper-{K}{\"a}hler four-folds by {$p$}-elementary lattices.
\newblock {\em Kyoto J. Math.}, 56(3):465--499, 2016.

\bibitem{BCNS}
S.~Boissi\`ere, A.~Cattaneo, M.~Nieper-Wisskirchen, and A.~Sarti.
\newblock The automorphism group of the {H}ilbert scheme of two points on a
  generic projective {K}3 surface.
\newblock In {\em K3 surfaces and their moduli}, volume 315 of {\em Progr.
  Math.}, pages 1--15. Birkh\"auser/Springer, [Cham], 2016.

\bibitem{CMJL}
S.~Casalaina-Martin, D.~Jensen, and R.~Laza.
\newblock The geometry of the ball quotient model of the moduli space of genus
  four curves.
\newblock In {\em Compact moduli spaces and vector bundles}, volume 564 of {\em
  Contemp. Math.}, pages 107--136. Amer. Math. Soc., Providence, RI, 2012.

\bibitem{CG}
C.~H. Clemens and P.~A. Griffiths.
\newblock The intermediate {J}acobian of the cubic threefold.
\newblock {\em Ann. of Math. (2)}, 95:281--356, 1972.

\bibitem{DM}
O.~Debarre and E.~Macr{\`i}.
\newblock Unexpected isomorphisms between hyperk{\"a}hler fourfolds.
\newblock \texttt{arXiv:1704.01439v1}.

\bibitem{DvGK}
I.~Dolgachev, B.~van Geemen, and S.~Kond\=o.
\newblock A complex ball uniformization of the moduli space of cubic surfaces
  via periods of {$K3$} surfaces.
\newblock {\em J. Reine Angew. Math.}, 588:99--148, 2005.

\bibitem{DK}
I.~V. Dolgachev and S.~Kond\=o.
\newblock Moduli of {$K3$} surfaces and complex ball quotients.
\newblock In {\em Arithmetic and geometry around hypergeometric functions},
  volume 260 of {\em Progr. Math.}, pages 43--100. Birkh\"auser, Basel, 2007.

\bibitem{GH}
P.~Griffiths and J.~Harris.
\newblock {\em Principles of algebraic geometry}.
\newblock Wiley Classics Library. John Wiley \& Sons, Inc., New York, 1994.
\newblock Reprint of the 1978 original.

\bibitem{GHS_moduli}
V.~Gritsenko, K.~Hulek, and G.~K. Sankaran.
\newblock Moduli spaces of irreducible symplectic manifolds.
\newblock {\em Compos. Math.}, 146(2):404--434, 2010.

\bibitem{GHS_handbook}
V.~Gritsenko, K.~Hulek, and G.~K. Sankaran.
\newblock Moduli of {K}3 surfaces and irreducible symplectic manifolds.
\newblock In {\em Handbook of moduli. {V}ol. {I}}, volume~24 of {\em Adv. Lect.
  Math. (ALM)}, pages 459--526. Int. Press, Somerville, MA, 2013.

\bibitem{Hassett}
B.~Hassett.
\newblock Special cubic fourfolds.
\newblock {\em Compositio Math.}, 120(1):1--23, 2000.

\bibitem{Huybrechts_Invent}
D.~Huybrechts.
\newblock Compact hyper-{K}{\"a}hler manifolds: basic results.
\newblock {\em Invent. Math.}, 135(1):63--113, 1999.

\bibitem{IM}
A.~Iliev and L.~Manivel.
\newblock Pfaffian lines and vector bundles on {F}ano threefolds of genus 8.
\newblock {\em J. Algebraic Geom.}, 16(3):499--530, 2007.

\bibitem{Kondo}
S.~Kond\=o.
\newblock The moduli space of curves of genus 4 and {D}eligne-{M}ostow's
  complex reflection groups.
\newblock In {\em Algebraic geometry 2000, {A}zumino ({H}otaka)}, volume~36 of
  {\em Adv. Stud. Pure Math.}, pages 383--400. Math. Soc. Japan, Tokyo, 2002.

\bibitem{KudlaRapoport}
S.~Kudla and M.~Rapoport.
\newblock On occult period maps.
\newblock {\em Pacific J. Math.}, 260(2):565--581, 2012.

\bibitem{Laza_period}
R.~Laza.
\newblock The moduli space of cubic fourfolds via the period map.
\newblock {\em Ann. of Math. (2)}, 172(1):673--711, 2010.

\bibitem{Lehn}
Christian Lehn.
\newblock Twisted cubics on singular cubic fourfolds - on {S}tarr's fibration.
\newblock \texttt{arXiv:1504.06406}.

\bibitem{LS}
E.~Looijenga and R.~Swierstra.
\newblock The period map for cubic threefolds.
\newblock {\em Compos. Math.}, 143(4):1037--1049, 2007.

\bibitem{Markman_Torelli}
E.~Markman.
\newblock A survey of {T}orelli and monodromy results for
  holomorphic-symplectic varieties.
\newblock In {\em Complex and differential geometry}, volume~8 of {\em Springer
  Proc. Math.}, pages 257--322.

\bibitem{Markman_constraints}
E.~Markman.
\newblock Integral constraints on the monodromy group of the hyper{K}\"ahler
  resolution of a symmetric product of a {$K3$} surface.
\newblock {\em Internat. J. Math.}, 21(2):169--223, 2010.

\bibitem{Mongardi_CRAS}
G.~Mongardi.
\newblock On natural deformations of symplectic automorphisms of manifolds of
  {$K3^{[n]}$} type.
\newblock {\em C. R. Math. Acad. Sci. Paris}, 351(13-14):561--564, 2013.

\bibitem{Mongardi}
G.~Mongardi.
\newblock A note on the {K}\"ahler and {M}ori cones of hyperk\"ahler manifolds.
\newblock {\em Asian J. Math.}, 19(4):583--591, 2015.

\bibitem{Mukai}
S.~Mukai.
\newblock {\em An introduction to invariants and moduli}, volume~81 of {\em
  Cambridge Studies in Advanced Mathematics}.
\newblock Cambridge University Press, Cambridge, 2003.
\newblock Translated from the 1998 and 2000 Japanese editions by W. M. Oxbury.

\bibitem{Murre}
J.~P. Murre.
\newblock Algebraic equivalence modulo rational equivalence on a cubic
  threefold.
\newblock {\em Compositio Math.}, 25:161--206, 1972.

\bibitem{Nikulin}
V.~V. Nikulin.
\newblock Integral symmetric bilinear forms and some of their applications.
\newblock {\em Math. USSR Izv.}, 14:103--167, 1980.

\bibitem{VoisinCubic}
C.~Voisin.
\newblock Th\'eor\`eme de {T}orelli pour les cubiques de $\mathbf{P}^5$.
\newblock {\em Invent. Math.}, 86(3):577--601, 1986.

\end{thebibliography}

\end{document}